\newtheorem{thm}{Theorem}
\newtheorem{lem}[thm]{Lemma}
\newtheorem{deff}[thm]{Definition}
\newtheorem{prop}[thm]{Proposition}
\newtheorem{expl}[thm]{Example}
\newtheorem{nexpl}[thm]{Non-example}
\newtheorem{theorem}{Theorem}
\numberwithin{figure}{section}
\newcommand{\res}[1]{\left.#1\right|}
\newcommand{\overbar}[1]{\mkern 1.5mu\overline{\mkern-1.5mu#1\mkern-1.5mu}\mkern 1.5mu}
\newcommand{\R}{\mathbb{R}} 
\newcommand{\C}{\mathbb{C}}
\newcommand{\Norm}[1]{\left\Vert #1 \right\Vert}
\newcommand{\p}{\partial}             
\newcommand{\abs}[1]{\lvert#1 \rvert} 
\newcommand{\inn}[1]{\langle #1\rangle}
\newcommand{\innn}[1]{\langle \hspace{-2pt} \langle #1\rangle \hspace{-2pt} \rangle}
\newcommand{\ring}[1]{\overset{\circ}{#1}}
\numberwithin{equation}{section}
\numberwithin{thm}{section}
\numberwithin{theorem}{section}
\numberwithin{table}{section}
\numberwithin{table}{section}
\begin{document}
\title{The Calabi metric and desingularization of Einstein orbifolds}
\author{Peyman Morteza}
\address{Department of Mathematics, University of Wisconsin, Madison, 
WI, 53706}
\email{morteza@math.wisc.edu}
\author{Jeff A. Viaclovsky}
\address{Department of Mathematics, University of California, Irvine, 
CA, 92697}
\email{jviaclov@uci.edu}
\thanks{The authors were partially supported by NSF Grant DMS-1405725. Part of this work was completed while the authors were in residence at Mathematical Sciences Research Institute in Berkeley, California. The authors would like to thank MSRI for their support, and for providing such an excellent working environment}
\begin{abstract} 
Consider an Einstein orbifold $(M_0,g_0)$ of real dimension $2n$ having a singularity 
with orbifold group the cyclic group of order $n$ in ${\rm{SU}}(n)$ which is generated 
by an $n$th root of unity times the identity. Existence of a Ricci-flat K\"ahler ALE metric with this group at infinity 
was shown by Calabi. There is a natural ``approximate'' Einstein 
metric on the desingularization of $M_0$ obtained by replacing a small neighborhood of the singular point of the orbifold with a scaled and truncated Calabi metric.
In this paper, we identify the first obstruction to perturbing this 
approximate Einstein metric to an actual Einstein metric. 
If $M_0$ is compact, we can use this to produce examples of Einstein orbifolds which do not admit any Einstein metric in a neighborhood of the natural approximate Einstein metric
on the desingularization. 
In the case that $(M_0,g_0)$ is asymptotically hyperbolic Einstein and non-degenerate, 
we show that 
if the first obstruction vanishes, then there does in fact exist an asymptotically hyperbolic Einstein metric on the desingularization. We also obtain a non-existence result in 
the asymptotically hyperbolic Einstein case, provided that the obstruction does not vanish. This work extends a construction of Biquard in the case $n =2$, in which case the Calabi metric is also known as the Eguchi-Hanson metric, but there are some key points for which the higher-dimensional case differs. 
\end{abstract}
\maketitle
\setcounter{tocdepth}{1}
\vspace{-1cm}
\tableofcontents
\section{Introduction}
\label{intro}
Let $\Gamma_{n}=\{ e^{\frac{2k\pi i}{n}}: k=0,...,n-1\}$ and consider the action of $\Gamma_{n}$ on $\C^{n}$ by 
\begin{align}
\label{action}
e^{\frac{2k\pi i}{n}} \cdot(z^{1},...,z^{n})=(e^{\frac{2k\pi i}{n}}z^{1},...,e^{\frac{2k\pi i}{n}}z^{n}).
\end{align}
The Calabi metric $g_{cal}$ is a Ricci-flat K\"ahler asymptotically locally Euclidean (ALE) metric on the total space $X$ of the line bundle $\mathcal{O}(-n) \rightarrow \mathbb{P}^{n-1}$, 
 which is $\mathrm{U}(n)$-invariant. The group at infinity is $\Gamma_n \subset \mathrm{SU}(n)$. 

Let $(M_0,g_0)$ denote an Einstein orbifold of real dimension $m = 2n$ satisfying 
\begin{align}
     Ric(g_{0})=\Lambda g_{0},
\end{align}
and which has a singular point with group $\Gamma_n$. One can desingularize the orbifold metric by gluing on a Calabi metric in the following manner.

Since $g_{cal}$ is Ricci-flat and ALE, there exists a coordinate system $\{x^i\}, i = 1 \dots 2n$, 
and a diffeomorphism 
\begin{align}
\Psi : ( \mathbb{R}^{2n} \setminus B(0,R))/\Gamma_n \rightarrow X \setminus K,
\end{align}
where $B(0,R)$ is a ball of radius $R$, and $K$ is a compact subset of $X$ so that 
\begin{align}
\label{ALEorder}
\Psi^* g_{cal} &= g_{euc} + O(r^{-2n}), \\
\partial^\alpha \Psi^*g_{cal} &= O(r^{-2n-|\alpha|}), 
\end{align}
where $\alpha$ is a multi-index of order $|\alpha|$, $r = |x|$,
and $g_{euc}$ denotes the Euclidean metric.  
In the following 
we extend $r$ to a globally defined function on $X$ which 
satisfies $0 < r < R$ on the interior of the compact subset $K$.

Similarly, since $p_0$ is a smooth orbifold point of $(M_0,g_0)$, 
there exists a coordinate 
system $\{z^i\}, i = 1 \dots 2n$, and a diffeomorphism 
\begin{align}
\tilde{\Psi}:  B(0,\epsilon) \setminus \{0\} \rightarrow  \widetilde{B(p_0, \epsilon) \setminus \{p_0\} } 
\end{align}
such that 
\begin{align}
\tilde{\Psi}^* \pi^* g_0 &= g_{euc} + O(|z|^2), 
\end{align}
where $\pi : \widetilde{B(p_0, \epsilon)}\rightarrow B(p_0, \epsilon)$ is the universal covering
mapping, and such that $\tilde{\Psi}^* \pi^*(g) $ extends to a smooth metric on $B(0, \epsilon)$.

Let $t>0$ be small, and consider the following regions 
\begin{align}
X^t &= \{ x \in X :  r < 2 t^{-1/4} \}\\
M_0^t &= M \setminus B(p_0, (1/2) t^{1/4})
\end{align}
Define a mapping 
\begin{align}
\label{attachingmap}
\phi_{t}: \{ x \in X \ | \ (1/2) t^{-1/4} < r < 2 t^{-1/4} \} 
\rightarrow \{ m \in M_0 \ | \ (1/2) t^{1/4} < |z| < 2 t^{1/4} \}
\end{align}
by $\phi(x) = \sqrt{t}  x$. Define $M^t$ to be the
union of $X^t$ and $M_0^t$ along the attaching map $\phi_t$. 

\begin{lem} If $A \in \mathrm{SO}(2n)$ satisfies 
$A \gamma = \gamma A$ for $\gamma$ a generator of $\Gamma_n$, 
then $A \in \mathrm{U}(n)$, if $n \geq 3$. 
\end{lem}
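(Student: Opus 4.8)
The plan is to realize $\gamma$ as a real-linear endomorphism of $\R^{2n}\cong\C^n$ and to exploit the fact that, for $n\geq 3$, it essentially encodes the complex structure. Under the standard identification used in \eqref{action}, multiplication by the primitive root of unity $\zeta=e^{2\pi i/n}$ is the real matrix $\gamma=(\cos\tfrac{2\pi}{n})\,\mathrm{Id}+(\sin\tfrac{2\pi}{n})\,J$, where $J$ is the standard complex structure on $\R^{2n}$ (block-diagonal with $n$ copies of the $90^\circ$ rotation). Since $\mathrm{Id}$ is central, the hypothesis $A\gamma=\gamma A$ is equivalent to $(\sin\tfrac{2\pi}{n})(AJ-JA)=0$.

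Next I would observe that for $n\geq 3$ one has $0<\tfrac{2\pi}{n}\leq\tfrac{2\pi}{3}<\pi$, hence $\sin\tfrac{2\pi}{n}\neq 0$, and therefore $AJ=JA$; that is, $A$ is complex linear, $A\in\mathrm{GL}(n,\C)$. This is exactly where the restriction $n\geq 3$ enters: when $n=2$ one has $\gamma=-\mathrm{Id}$, which is central in $\mathrm{SO}(4)$ and imposes no constraint whatsoever, consistent with the well-known fact that in the Eguchi--Hanson case the normalizer is larger. Equivalently, one can diagonalize $\gamma$ over $\C$: the $\zeta$-eigenspace is precisely the holomorphic tangent space $T^{1,0}\C^n$, any matrix commuting with $\gamma$ preserves it, and this eigenspace is distinct from the $\bar\zeta$-eigenspace exactly because $\zeta\notin\R$ once $n\geq 3$; preserving $T^{1,0}$ is the definition of complex linearity.

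Finally, I would combine $A\in\mathrm{GL}(n,\C)$ with $A\in\mathrm{SO}(2n)\subset\mathrm{O}(2n)$ to conclude $A\in\mathrm{O}(2n)\cap\mathrm{GL}(n,\C)=\mathrm{U}(n)$: an orthogonal, complex-linear map preserves both the Euclidean metric $g$ and the associated Kähler form $\omega(\cdot,\cdot)=g(J\cdot,\cdot)$, hence the Hermitian form $g-i\omega$, which is the definition of unitarity. There is essentially no hard step here; the computation is formal, and the only point that genuinely requires attention is the degenerate small-$n$ behavior, which is precisely what forces the stated hypothesis $n\geq 3$.
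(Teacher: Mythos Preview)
Your proof is correct. The paper actually omits the proof entirely, remarking only that the lemma is elementary; your argument supplies exactly the kind of direct computation one would expect, and correctly isolates where the hypothesis $n\geq 3$ enters (namely, to ensure $\sin(2\pi/n)\neq 0$, so that commuting with $\gamma$ forces commuting with $J$).
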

This lemma is elementary, and the proof is omitted. It illustrates a 
key difference between the case $n=2$ and the higher-dimensional cases:
if $n=2$, then one can modify the attaching map $\phi$ by $2$ nontrivial rotational parameters, 
but there is no rotational freedom if $n > 2$ (since the Calabi metric is 
$\mathrm{U}(n)$-invariant). Another consequence 
of this lemma is that there is a well-defined almost 
complex structure $J$ at $p_0$. The corresponding K\"ahler form at $p_0$, 
will be denoted by $\omega_{p_0}$. 

An approximate Einstein metric on $M^t$ is defined as follows. 
Let $\chi$ be a smooth function satisfying $0 \leq \chi \leq 1$, 
\begin{align}
\label{cutoff}
\chi(s) = 
\begin{cases}
1 &  s \leq 1/2 \\
0 & s  \geq 2\\
\end{cases},
\end{align}
and define $\chi_{t}(s) \equiv \chi(t^{\frac{1}{4}}s)$. Then 
\begin{align}
g_t =
\begin{cases}
g_0  & M_0 \setminus B(p_0, 2 t^{1/4}) \\
(1 - \chi_{t}(r) )\phi_{t}^* g_0 + \chi_{t}(r) t \cdot g_{cal} & (1/2) t^{-1/4} < r < 2 t^{-1/4}\\
t \cdot g_{cal}  & r < (1/2) t^{-1/4} \\
\end{cases}
\end{align}
is a natural ``approximate'' Einstein metric on $M^t$.

\begin{figure}[h]
\begin{tikzpicture}
\draw[line width = .05 cm, lightgray]  (5,0) arc (3:177:5 cm and .5cm);
\draw[line width = .05 cm, black] (-5,0) arc (180:360:5 cm and .5cm);
\draw[line width = .05 cm, black](-5.02,-.01) -- (-1.2,2.3);
\draw[line width = .05 cm, lightgray] (2,2.5) arc (3:177:2 cm and .25cm);
\draw[line width = .05 cm, gray](-1.2,2.3) -- (0.02,3.01);
\draw[line width = .05 cm, gray](1.2,2.3) -- (-.02,3.01);
\draw[line width = .05 cm, black](1.2,2.3) -- (5.02,-.01);
\draw[line width = .05 cm, black] (-2,2.5) arc (180:360:2 cm and .25cm);
\draw[line width = .05 cm, black]  (-2.02,2.49) -- (-.5,3.5);
\draw[line width = .05 cm, black] (.5,3.5) -- (2.02,2.49);
\draw[line width = .05 cm, black] (-.5,3.5) to [out = 35, in = 145] (.5,3.5);
\draw[line width = .03 cm, black, ->] (-5.2,1.5) -- (-4,1);
\node[align = center, above, left] at (-5.2,1.5) {Einstein orbifold \\ $(M_0,g_{0})$};
\draw[line width = .03 cm, black, ->] (2.2,4) -- (1,3.5);
\node[align = center, above, right] at (2.2,4) {Calabi metric \\ $(X, t \cdot g_{cal})$};
\draw[dashed] (-2,4) -- (-2,-1);
\draw[dashed] (-.5,4) -- (-.5,-1);
\draw[dashed] (0,3) -- (0,-1);
\node[below] at (.1,-1.1) {$0$};
\draw[<-|] (-5,-1) -- (0, -1);
\node[align = left, below] at (-5,-1) {$|z|$};
\node[below] at (-2,-1) {$2 t^{\frac14}$};
\node[below] at (-.5,-1) {$\frac{1}{2} t^{\frac14}$};
\draw[<-|] (-5,4) -- (0, 4);
\node[align = left, below] at (-5,4) {$r$};
\node[above] at (-2,4) {$2 t^{-\frac14}$};
\node[above] at (-.5,4) {$\frac{1}{2} t^{-\frac14}$};
\node at (-1.25,1.25) {\footnotesize $|z| =r \sqrt{t}$};
\end{tikzpicture}
\caption{The desingularization procedure.}
\label{bubblefig}
\end{figure}
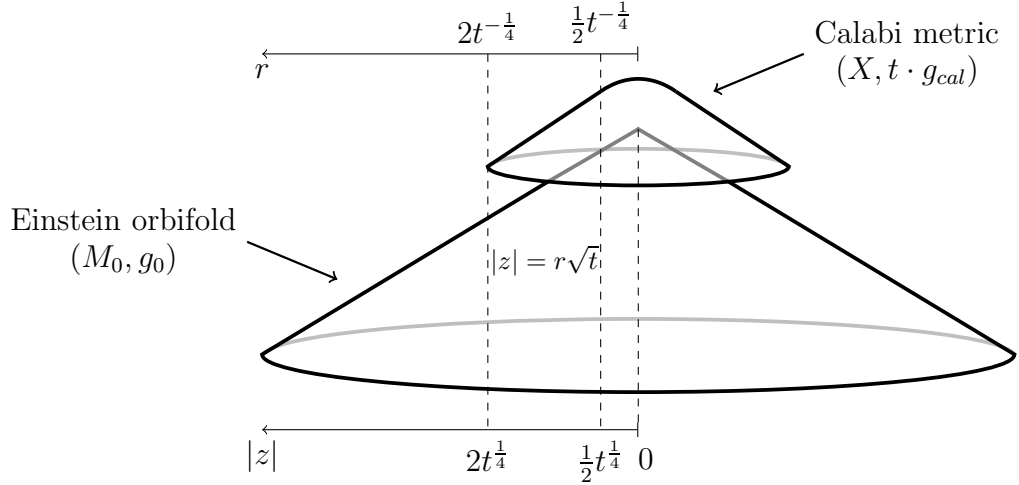
We define a weight function on $M^t$ as follows. 
\begin{align}
\label{weightdef}
w = 
\begin{cases}
1   & z \in M_0 \setminus B(p_0, i_{g_0})\\
|z| & \frac12 t^{\frac14}  <  |z| < \frac12 i_{g_0}\\
r t^{\frac12}  &   2 < r  <  2 t^{-\frac14}\\
t^{\frac12} &   r \leq 1 \\
\end{cases},
\end{align}
where $i_{g_0}$ is the injectivity radius of $g_0$, and 
such that $w$ is increasing in $r$ for $1 < r < 2$,
and increasing in $|z|$ for $ \frac12 i_{g_0} < |z| < i_{g_0}$.

Our first result is in the case of a compact Einstein orbifold, which is an extension of \cite[Theorem 1.2]{Biquard}. We define 
\begin{align}
\inn{\mathcal{R}(\omega),\omega}(p_0) = \sum_{i,j,k,l} R_{ijkl}(p_0)(\omega_{p_0})_{ij}
(\omega_{p_0})_{kl},
\end{align}
and let $R(p_0)$ denote the scalar curvature at $p_0$. 
\begin{theorem}
\label{theoremone}
Let $(M_0,g_0)$ be a compact Einstein orbifold having a singular point $p_0$ 
with orbifold group $\Gamma_n \subset {\rm{SU}}(n)$ with $n \geq 3$, 
and assume that 
\begin{align}
\label{obstruction}
n \inn{\mathcal{R}(\omega),\omega}(p_0) + 2(n-2) R(p_0) \neq 0.
\end{align}
Then there exist constants $\epsilon > 0$ and $\delta_0 > 0$ small, such that  
there does not exist any 
Einstein metric $g_E$ near $g_t$ satisfying
\begin{align}
\label{t1n}
\Vert  \nabla^{\ell} ( g_t - g_E) \Vert_{C^0(g_t)} < \epsilon w^{- \delta_0- \ell},
\end{align} 
for $0 \leq \ell \leq 3$, and sufficiently small gluing parameter $t$.
\end{theorem}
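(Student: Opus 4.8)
The plan is to argue by contradiction, using a Pohozaev--type pairing of the Einstein equation against a distinguished approximate Jacobi field concentrated in the gluing region; this pairing is exactly the ``obstruction component'' of the Lyapunov--Schmidt reduction for the Einstein equation near $g_t$, so its non-vanishing rules out nearby Einstein metrics. First I would record the expansions of the two building blocks. In $g_0$--geodesic normal coordinates at $p_0$, and with $z=\sqrt t\,x$ (so $|z|\sim t^{1/4}$ on the overlap), $\phi_t^* g_0 = g_{euc}(z)-\tfrac13 R_{ikjl}(p_0)z^k z^l\,dz^i dz^j+O(|z|^3)$; on $X$ one refines \eqref{ALEorder} to $\Psi^* g_{cal}=g_{euc}+h_\infty+O(r^{-2n-1})$, where $h_\infty$ is an explicit $\mathrm{U}(n)$--invariant symmetric $2$--tensor of order $r^{-2n}$ that solves the linearized Ricci--flat equation on $\mathbb{R}^{2n}\setminus\{0\}$ modulo $O(r^{-4n})$. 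The precise constant multiplying $h_\infty$, read off from Calabi's K\"ahler potential, is what ultimately fixes the coefficients in \eqref{obstruction}. Let $\Phi^{cal}$ denote the associated honest Jacobi field of $D\mathrm{Ric}_{g_{cal}}$ on $X$ (decaying like $r^{-2n}$ at infinity, available since $b_2(X)\neq 0$), let $\Phi_t$ be its transplant to $M^t$ cut off by $\chi_t$ --- transverse--traceless and an approximate Jacobi field up to errors supported in the neck --- and write $E_t=\mathrm{Ric}(g_t)-\Lambda g_t$. By construction $E_t$ is supported in $\{r<2t^{-1/4}\}$, equals $-\Lambda\,t\cdot g_{cal}$ on the Calabi core, and on the neck is governed by $D\mathrm{Ric}_{\phi_t^*g_0}\bigl(\chi_t(t\,g_{cal}-\phi_t^*g_0)\bigr)$, i.e.\ by the quadratic curvature jet of $g_0$ differentiated against the cut--off (if needed one first replaces $g_t$ by a standard higher--order correction, which does not change what it means to be close to $g_t$).

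The heart of the argument is the evaluation of
\begin{equation*}
\mathcal{O}(t)=\int_{M^t}\langle E_t,\Phi_t\rangle_{g_t}\,dV_{g_t}.
\end{equation*}
Integrating by parts twice --- a Green's identity for $D\mathrm{Ric}$, using that $\phi_t^*g_0$ and $t\cdot g_{cal}$ solve $\mathrm{Ric}=\Lambda(\cdot)$ on their respective domains and that $D\mathrm{Ric}_{g_{cal}}\Phi^{cal}=0$ --- collapses $\mathcal{O}(t)$ to a boundary integral over a sphere $\{r=\rho\}$, $\rho\in(\tfrac12 t^{-1/4},2t^{-1/4})$, which is $\rho$--independent to leading order. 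Evaluating that sphere integral --- pairing the $|z|^2$ curvature jet of $g_0$ against the $r^{-2n}$ Calabi tail, integrating $z^k z^l$ over $S^{2n-1}$, and using the $\mathrm{U}(n)$--invariance of $h_\infty$, which is precisely why the answer depends on $\omega_{p_0}$ rather than merely on the Ricci curvature --- yields
\begin{equation*}
\mathcal{O}(t)=c_n\Bigl(n\,\langle\mathcal{R}(\omega),\omega\rangle(p_0)+2(n-2)R(p_0)\Bigr)\,t^{\sigma}+o(t^{\sigma})
\end{equation*}
for a dimensional constant $c_n\neq0$ and an explicit exponent $\sigma=\sigma(n)>0$. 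The scalar--curvature term absorbs both the $\Lambda$--contributions to $E_t$ and the Ricci/trace parts of the curvature jet; it is absent for $n=2$, which is one of the places where the higher--dimensional situation genuinely differs.

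Now suppose for contradiction that $g_E$ is Einstein and satisfies \eqref{t1n}, and set $h=g_E-g_t$. Since $\mathrm{Ric}(g_E)=\Lambda g_E$, writing $g_E=g_t+h$, expanding $\mathrm{Ric}$, and using that $\Phi_t$ is compactly supported,
\begin{equation*}
0=\mathcal{O}(t)+\int_{M^t}\langle(D\mathrm{Ric}_{g_t}-\Lambda)h,\Phi_t\rangle_{g_t}\,dV_{g_t}+\int_{M^t}\langle Q_{g_t}(h),\Phi_t\rangle_{g_t}\,dV_{g_t}+\mathrm{Err},
\end{equation*}
where $Q_{g_t}$ collects the terms quadratic and higher in $h$ and $\mathrm{Err}$ accounts for replacing $g_E$ by $g_t$ in the metric and volume form. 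Integrating by parts in the linear term (no boundary contribution, since $M^t$ is closed) moves both derivatives onto $\Phi_t$; because $\Phi_t$ is transverse--traceless and an approximate Jacobi field up to errors supported in the neck, the resulting tensor is small and concentrated there, and \eqref{t1n} bounds this term by $C\epsilon\,t^{\sigma'}$. The quadratic term and $\mathrm{Err}$ are bounded similarly using \eqref{t1n} for $0\le\ell\le 2$, and carry an extra factor of $\epsilon$. Choosing $\delta_0>0$ small (depending only on $n$) forces $\sigma'\ge\sigma$, so every correction is $\le C\epsilon\,t^{\sigma}$; then choosing $\epsilon$ small enough that $C\epsilon<\tfrac12|c_n|\,\bigl|n\langle\mathcal{R}(\omega),\omega\rangle(p_0)+2(n-2)R(p_0)\bigr|$ gives $|\mathcal{O}(t)|\le C\epsilon\,t^{\sigma}<\tfrac12|\mathcal{O}(t)|$ for all sufficiently small $t$, a contradiction. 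This proves the theorem.

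The main obstacle is the evaluation of $\mathcal{O}(t)$ in the second step: choosing $\Phi_t$ correctly, carrying out the double integration by parts so that only one $\rho$--independent sphere integral survives, and computing that integral exactly. This forces one to know the Calabi metric's asymptotics beyond the order recorded in \eqref{ALEorder}, and to perform the curvature contraction that produces precisely the combination in \eqref{obstruction} with a nonzero constant --- in particular, to see why the $\mathrm{U}(n)$--invariance of the Calabi data makes $\langle\mathcal{R}(\omega),\omega\rangle(p_0)$ appear together with exactly the coefficient $2(n-2)$ in front of $R(p_0)$. A secondary but nontrivial point is the weighted bookkeeping in the last step: one must verify that the hypothesis \eqref{t1n}, with a small but fixed $\delta_0$, really does make each correction term of order at least $t^{\sigma}$, so that the nonvanishing of the obstruction genuinely obstructs.
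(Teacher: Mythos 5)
Your strategy is a direct Pohozaev/pairing argument: pair the Einstein defect against a cut-off transplant of the Calabi Jacobi field, reduce to a sphere integral, and show the resulting leading coefficient is exactly the expression in \eqref{obstruction}. This is \emph{not} how the paper argues. The paper instead builds a full Lyapunov--Schmidt reduction (Proposition~\ref{obs}): it gauge-fixes with Biquard's diffeomorphism lemma, constructs a refined approximate metric $g_t$ (Section~\ref{approximate}) for which the Einstein defect has a uniformly controlled form modulo the obstruction $\chi_t o$, then invokes the implicit function theorem to produce a unique family $\hat g_{t,v}$ solving $\Phi(\hat g_{t,v})=\lambda(t)\chi_t o$, and finally extracts $\lambda(t)=t\lambda+O(t^{2-\delta_0/2})$. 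A nearby Einstein $g_E$ would be one of these $\hat g_{t,v}$'s with $\lambda(t)=0$, contradicting $\lambda\neq 0$. The sphere-integral computation you describe is indeed present in the paper, but it appears one level down, in Proposition~\ref{limitobs}, as the derivation of the value of $\lambda$; the paper never pairs the full Einstein equation for $g_E$ directly against $\chi_t o$.

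Your reformulated obstruction is correct: using $R=2n\Lambda$ one checks that $\frac{2-n}{2}\Lambda-\frac18\langle\mathcal{R}\omega,\omega\rangle$ is a nonzero multiple of $n\langle\mathcal{R}\omega,\omega\rangle+2(n-2)R$, so your claimed leading coefficient for $\mathcal{O}(t)$ matches the paper's $\lambda$. The exponent is $\sigma=n-1$ in the $g_t$-volume normalization, consistent with the paper's $\lambda(t)t^{n-2}\|o\|^2 = t^{n-2}\bigl(\lambda t\|o\|^2+O(t^{2-\delta_0/4})\bigr)$.

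The genuine gap is the gauge. When you integrate by parts to move $(D\mathrm{Ric}_{g_t}-\Lambda)$ onto $\Phi_t$, the operator $D\mathrm{Ric}$ is not formally self-adjoint: it differs from the gauged operator $P$ by $-\delta_g^*B_g$ (cf.\ \eqref{dgp}), and the Calabi kernel element $o$ annihilates $P_{g_{cal}}$, not $D\mathrm{Ric}_{g_t}^*$. The extra term $\int\langle B_{g_t}h,\delta_{g_t}\Phi_t\rangle$ is small only because $\delta_{g_{cal}}o=0$, but $\delta_{g_t}\Phi_t\neq 0$ (cut-off error plus $g_t\neq t g_{cal}$ on the neck), so you must estimate it; and more seriously, there is an entire infinite-dimensional family of ``pure gauge'' directions $h=\mathcal{L}_V g_t$ for which $(D\mathrm{Ric}_{g_t}-\Lambda)h=\mathcal{L}_V E_t$ is small but not zero, and \eqref{t1n} gives no control on the gauge in which $g_E$ is presented. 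The paper deals with this by first applying the diffeomorphism from \cite[Section~8]{Biquard} to put $\varphi^*g_E$ into Bianchi gauge with respect to $g_t$ without losing closeness, after which the gauge directions are killed. Your argument needs an analogous step (or an explicit estimate showing the gauge contributions to the pairing are $o(t^{n-1})$), and as written it doesn't have one. A smaller but real issue: you only have the weak a priori bound $\|h\|\lesssim\epsilon w^{-\delta_0}$ from \eqref{t1n}, whereas the paper's IFT output $\|\hat g_{t,v}-g_{t,v}\|\leq ct^{1+\delta_0/4}$ is vastly stronger; your direct estimate of $\int\langle(D\mathrm{Ric}-\Lambda)h,\Phi_t\rangle$ and of $Q(h)$ must be carried out with the weaker bound, and while a back-of-envelope count suggests it works for small $\delta_0$, this is exactly the ``weighted bookkeeping'' you flag as open, and is not a step that can be waved through.
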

This theorem does give some new understanding of possible degenerations of Einstein metrics, but we do not obtain an existence result in the compact case because this gluing problem is {\textit{obstructed}}, and the criterion in \eqref{obstruction} is only the first obstruction. To find an Einstein metric in this manner, one would have to show the vanishing of infinitely many obstructions (both of a local and global nature). 
This would be difficult, if not impossible, to check in any given case. 
However, it is possible that this procedure could work in a specific example, an example of which is the following (and is a generalization of the well-known Kummer construction in complex dimension $2$). 
\begin{expl}{\rm
This example is due to Peng Lu \cite{PengLu} (and is proved using a gluing argument for K\"ahler-Einstein metrics).  
Let $\Sigma$ denote the elliptic curve with a non-trivial automorphism of order $3$. Consider the $6$-torus $\Sigma \times \Sigma \times \Sigma$. Let $\mathbb{Z}_3$ act diagonally on the product. Then $T^6 / \mathbb{Z}^3$ has $27$ singular points of type $\Gamma_3$. The flat metric on the quotient {\textit{can}} be resolved to a Calabi-Yau metric on the resolution by attaching a Calabi metric at each of the $27$ singular points. Clearly, the obstruction \eqref{obstruction} is satisfied at the singular points, since the orbifold metric is flat. 
}
\end{expl}

Theorem \ref{theoremone} implies several non-existence results. The first ``non-example'' is the following.
\begin{nexpl}{\rm
Let $(M,g)$ be a compact K\"ahler-Einstein orbifold with Einstein constant $\Lambda \neq 0$, 
and singular point $p_0$ of type $\Gamma_n$ and $n \geq 3$. In the K\"ahler-Einstein case, $\langle \mathcal{R}\omega, \omega \rangle = 2 R$ (see \cite[page 77]{Besse}), 
so the obstruction in \eqref{obst} is equal to $4(n-1) R(p_0)$. Consequently, if $\Lambda \neq 0$, then there does not exist any Einstein metric near the approximate metric $g_t$, for $t$ sufficiently small. We note that it follows from the adjunction formula that there is no K\"ahler-Einstein metric near $g_t$ for $t$ sufficiently small, so the main point is that these examples cannot be desingularized by a non-K\"ahler Einstein metric either. 
}
\end{nexpl}
 This ``non-example'' should be contrasted with the case of $n=2$, in which case there are known examples of sequences of positive K\"ahler-Einstein metrics limiting to orbifolds with Eguchi-Hanson metrics bubbling off \cite{Tian, OSS}. 
The ``explanation'' is that for $n=2$, the rotational parameters come into play, and the obstruction can be made to vanish upon rotating the complex structure, but this phenomenon does not happen in higher dimensions. 
\begin{nexpl}{\rm
The football metric $g_S$ on $S^{2n}/\Gamma$ is
defined as the quotient of the round metric on $S^{2n}$ by $\Gamma$, where $\Gamma$ acts
in the above manner on the first $2n$-coordinates of $\mathbb{R}^{2n+1}$. 
Since $g_S$ has constant curvature equal to $1$, the obstruction \eqref{obstruction}
does not vanish at either of the singular points. Consequently, there does not exist any Einstein metric near the approximate metric $g_t$, for $t$ sufficiently small. 
}
\end{nexpl}
We emphasize that Theorem \ref{theoremone} only can be used to rule out an Einstein metric near the specific approximate metric $g_t$ defined above. Of course, if the first obstruction in \eqref{obstruction} is non-zero, it is possible that there could be some other procedure to desingularize the orbifold points which could produce an Einstein metric, but the above approximate metric seems to be the most ``natural''. 
 
\subsection{Asymptotically hyperbolic Einstein metrics}
We next consider a class of complete, non-compact Riemannian metrics. 
\begin{deff}
{\em
Let $\overbar{M} = M \cup \p M$ be a smooth, compact manifold with boundary, and let $g$ be a Riemannian metric on $M$. We say that $g$ is {\textit{conformally compact}} if 
there exists a function
$x \in C^{\infty}(\overbar{M},\R)$ with $x>0$ on  $M$ such that
\begin{align} 
\label{deffunction}
\p M = \{ p\in \overbar{M}:x(p)=0\},
\end{align}	
and $\overbar{g}=x^{2}g$ extends to a Riemannian metric on $\overbar{M}$ of class $C^2$ 
with $dx\neq0$ on $\p M$.
}
\end{deff}
A function $x$ with the above properties is called a boundary defining function. 
Note that $\overbar{g}\vert_{\p M}$ is not well-defined since there might be another candidate for a boundary defining function $x$ but its conformal class $\lbrack\overbar{g}\vert_{\p M}\rbrack$ is well-defined and we call it the conformal infinity of $g$.
\begin{deff}
Let $(M,\p M,g,x)$ be a smooth, conformally compact manifold. We say that $(M,\p M,g,x)$ is asymptotically hyperbolic (AH) if $\abs{d x}_{\overbar{g}}=1$, where 
$x$ is a boundary defining function as above. 
If moreover $g$ is Einstein with $Ric(g) = - (m-1) g$, then $g$ is called 
asymptotically hyperbolic Einstein (AHE). 
\end{deff}
Note that the AH condition is independent of the defining function $x$. 
Furthermore, if $m = \dim(M)$ is even, the metric $\bar{g}$ extends smoothly 
to $\overbar{M}$ \cite{CDLS}. For more background on the theory of 
AHE metrics, we refer the reader to \cite{MR2260400, Lee, Leebook, Mazzeo1}.
We say that $g$ is non-degenerate if the space of $L^2$ infinitesimal Einstein 
deformations is trivial (see Definition \ref{deform}).  

Our main existence result is the following, which is an extension of \cite[Theorem~1.1]{Biquard} to the higher-dimensional case. 
\begin{theorem}
\label{theorem2}
Let $(M_0,g_0)$ be an asymptotically hyberbolic Einstein orbifold having a singular point $p_0$ with orbifold group $\Gamma_n \subset {\rm{SU}}(n)$, with 
conformal infinity $[ g_{\infty}]$ and $n \geq 3$. If $g_0$ is non-degenerate and
\begin{align}
\label{obst2}
n \inn{\mathcal{R}(\omega),\omega}(p_0) + 2(n-2) R(p_0) =0,
\end{align}
then there exists an AHE metric  $\hat{g}_t$ on $M^t$ for all $t$ sufficiently small. The conformal infinity $[\hat{g}_{\infty,t}]$ of the Einstein metric is a small perturbation of $[ g_{\infty}]$, in the sense that 
\begin{align}
\Vert \hat{g}_{\infty,t} -  g_{\infty} \Vert_{C^2(\partial M)} < C t. 
\end{align}
for some constant $C$.
\end{theorem}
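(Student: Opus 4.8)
The plan is to carry out a gluing argument in the spirit of \cite{Biquard}: solve the Einstein equation on $M^t$ by a fixed-point iteration started from the approximate metric $g_t$, for $t$ small. I would first pass to a gauge-fixed formulation. Using the DeTurck/Bianchi trick with background $g_t$, replace $Ric(g) + (2n-1) g = 0$ by the quasilinear elliptic system $\Phi_t(g) := Ric(g) + (2n-1)g + \delta_g^*\big(B_{g_t}(g)\big) = 0$, where $B_{g_t}$ is the Bianchi operator of $g_t$; a solution close to $g_t$ is automatically in Bianchi gauge, hence Einstein. Writing $g = g_t + h$ gives $\Phi_t(g_t+h) = \Phi_t(g_t) + L_t h + Q_t(h)$, with $L_t = D\Phi_t|_{g_t}$ the linearized gauged Einstein operator and $Q_t$ quadratic, so the task is to solve the fixed-point equation $h = -L_t^{-1}\big(\Phi_t(g_t) + Q_t(h)\big)$ in a small ball of a weighted space.

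Next I would introduce weighted Hölder spaces $C^{k,\alpha}_{\delta}$ on $M^t$, measured with respect to $g_t$ and the weight $w$ of \eqref{weightdef}, together with a boundary weight controlling the rate of approach to the conformal infinity $\partial M$. On these spaces one estimates the error: since $g_t$ equals $g_0$ outside the neck and $t\,g_{cal}$ in the deep bubble region, $\Phi_t(g_t)$ is concentrated in the gluing annulus $\{(1/2)t^{-1/4} < r < 2 t^{-1/4}\}$ plus a contribution from the failure of $g_0$ to be flat at $p_0$; using \eqref{ALEorder} and $\tilde{\Psi}^*\pi^* g_0 = g_{euc} + O(|z|^2)$ this yields $\Vert \Phi_t(g_t)\Vert_{C^{k-2,\alpha}_{\delta-2}} \le C\,t^{a}$ for a positive power $a = a(n,\delta)$.

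The analytic heart is a uniform estimate for $L_t^{-1}$. The claim is that, for $\delta$ in an admissible range avoiding the indicial roots of the model operators on the Calabi space $(X,g_{cal})$ and on the flat cone, and for the boundary weight in the admissible AHE range, $L_t$ is an isomorphism onto the $L^2$-orthogonal complement of a fixed finite-dimensional obstruction space $\mathcal{H}$ arising from the Calabi bubble (generated by the quadratically-growing, curvature-type solutions of the linearized Einstein equation on $X$), with $\Vert L_t^{-1}\Vert \le C t^{-b}$ for some $b < a$. This is proved by contradiction and rescaling: a sequence $t_i \to 0$ with unit-norm $h_i$ and $L_{t_i}h_i \to 0$, after extracting limits on the three natural regions, would produce either a nonzero $L^2$ infinitesimal Einstein deformation of $(M_0,g_0)$ --- excluded by the non-degeneracy hypothesis --- or a nonzero decaying solution of the linearized Einstein equation on $(X,g_{cal})$ lying in $\mathcal{H}^{\perp}$ --- excluded by the deformation theory of the Calabi metric --- or a nonzero solution at the conformal infinity --- excluded by the standard AHE scattering theory in the relevant weight range.

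Finally, the obstruction is removed and the boundary behavior tracked. Projecting the fixed-point equation onto $\mathcal{H}$ yields a finite-dimensional reduced equation whose leading-order right-hand side is, by a direct computation pairing $\Phi_t(g_t)$ against the generators of $\mathcal{H}$, a nonzero universal constant times $t^{c}\big(n\inn{\mathcal{R}(\omega),\omega}(p_0) + 2(n-2)R(p_0)\big)$; under hypothesis \eqref{obst2} this leading term vanishes, so the reduced obstruction is of strictly lower order. I would absorb the residual obstruction by letting the conformal infinity vary --- the infinite-dimensional boundary parameter surjects onto $\mathcal{H}$ with uniformly bounded ``inverse'', by non-degeneracy together with the AHE linear theory --- which closes the contraction mapping on $\mathcal{H}^{\perp}$ and produces the Einstein metric $\hat{g}_t$, with conformal infinity within $Ct$ of $[g_\infty]$ in $C^2$ by bookkeeping the size of the residual. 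I expect the two hardest points to be the uniform bound $\Vert L_t^{-1}\Vert \le C t^{-b}$ with $b<a$ --- requiring the three-region blow-up and precise control of the deformation space of the higher-dimensional Calabi metric --- and the pairing computation identifying the leading obstruction with the combination in \eqref{obst2}, which is exactly where the analysis departs from Biquard's $n=2$ case.
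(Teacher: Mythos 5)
Your overall scheme is the right one and largely tracks the paper: gauge via the DeTurck/Bianchi trick, Lyapunov--Schmidt reduce onto the one-dimensional obstruction span of $\chi_t o$ on the Calabi bubble, show $\lambda(t)=t\lambda + O(t^{2-\delta_0/2})$ via a three-region blow-up, and then kill the residual obstruction by a one-parameter perturbation that changes the conformal infinity. However, there is a genuine gap at the step you dismiss most quickly: the assertion that ``the infinite-dimensional boundary parameter surjects onto $\mathcal{H}$ with uniformly bounded inverse, by non-degeneracy together with the AHE linear theory.'' This is exactly what fails to be automatic in higher dimensions. What one actually needs is a tensor $\tilde{k}$ on $M_0$ solving $P_{g_0}\tilde{k}=0$, bounded at conformal infinity (i.e.\ $\tilde{k}=k_\infty+o(1)$ with $x^2 k_\infty\in \overbar{V}_0$), with a \emph{prescribed} quadratic leading $2$-germ $\sigma$ at $p_0$ --- namely the one matching the leading term of the solution $k_1$ on the Calabi bubble with $P_{g_{cal}}(k_1)=o$. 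The corresponding mapping $S_\infty^2$ from boundary data to $2$-germs is \emph{not} surjective: its image is $S^{\perp}$, where $S$ is the space of $2$-germs of the form $|z|^{m+2}\mathcal{K}_{euc}(\alpha/|z|^{m})$ for $\alpha$ a one-form with linear coefficients (Lemma \ref{biglem}); the paper explicitly points out that the claimed surjectivity in Biquard's four-dimensional argument does not persist here (footnotes to Proposition \ref{injprop} and Lemma \ref{biglem}).

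Therefore the argument does not close until you verify that the particular $2$-germ
\begin{align}
\sigma_2 = r^2\Big(-dr\otimes dr-(2n-1)r^2\theta\otimes\theta+g_{euc}\Big)
\end{align}
required by the Calabi-side construction actually lies in $S^{\perp}$. In the paper this is the direct computation of Proposition \ref{tensork33}, in particular \eqref{kp4}, $\innn{S,K}=0$ for all $K$ of the conformal-Killing form \eqref{KKKform}. This is a concrete, nontrivial verification, and it is precisely where the higher-dimensional case departs from $n=2$. A second, smaller imprecision: the paper does not quote a general surjectivity-with-bounded-inverse for the boundary-to-obstruction map; it instead uses a single explicit deformation $g_{t,u}=g_t+uk$ (built from $\tilde{k}$ and $k_1$), shows $\lambda(t,u)=tu+O(t^{2-\delta_0/2})$ when $\lambda=0$, and applies the classical implicit function theorem to find $u(t)$. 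Your description is compatible in spirit but should be made to rest on the existence of $\tilde{k}$, which in turn requires the $S^{\perp}$ computation. Without it, the claim that the conformal-infinity degree of freedom ``surjects onto $\mathcal{H}$'' is unjustified, and would be false if $\sigma_2$ happened to lie in $S$.
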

The Einstein metric $\hat{g}_t$ is close, in a weighted norm,  
to a certain perturbation of~$g_t$ which changes the conformal infinity. 
For the precise statement, see Section~\ref{completionsec}. 
The only ``explicit'' example of this phenomenon that we know of is the following. 
\begin{expl}{\rm
In \cite{PagePope}, Page and Pope wrote down an explicit $1$-parameter family of 
${\rm{U}}(n)$-invariant Einstein metrics $\mathcal{O}(-n) \rightarrow \mathbb{P}^{n-1}$,
see also \cite{BB}. 
In \cite{MazzeoSinger}, it was pointed out by Mazzeo and Singer that
at one end of the parameter space, 
the Page-Pope family degenerates to an Einstein orbifold,
with a Ricci-flat ALE metric bubbling off (which is in fact the Calabi metric). 
It can be shown that the limiting orbifold 
is the quotient of a Page-Pope metric on $\mathbb{D}^{2n}$, which also comes in a 
$1$-dimensional family, and the parameter is 
exactly that for which the obstruction \eqref{obst2} is satisfied. Of course, Theorem \ref{theorem2} could be used to produce the Page-Pope metrics on $\mathcal{O}(-n)$, but since these metrics are already known explicitly, it is not necessary to analyze this example in more detail here.  
}
\end{expl}
We note that the boundary conformal class in the previous example is that of a Berger sphere. 
The boundary Berger sphere conformal classes in the Page-Pope family on $\mathcal{O}(-n)$ 
are those with smaller length of the Hopf fiber than the one for which 
the obstruction vanishes. The ones with longer fiber length probably do not have any AHE filling, but the proof of this would require a more detailed analysis than given here (in particular, it would require a computation of the second obstruction). 

There is also a corresponding non-existence result in the AHE case, provided that the obstruction in \eqref{obst2} does not vanish. But to state this properly, we need the following. Use the boundary defining function $x$ to identify 
a neighborhood of the boundary, call it $U_{2\epsilon}$ 
with $(0, 2\epsilon) \times \partial M_0$, and if 
necessary, we modify the defining function $x$ to a new
defining function $\tilde{x}$ satisfying
\begin{align}\label{txdef}
\tilde{x}(z) = 
\begin{cases}
x & z \in U_{2\epsilon}\\
2 \epsilon  \leq \tilde{x} \leq 1 & z \in M_0 \setminus \{ U_{2\epsilon}\}.\\
\end{cases}
\end{align}
Given a boundary metric $\tilde{g}_{\infty}$ close to $g_{\infty}$, 
using the identification above we 
extend $\tilde{g}_{\infty}$ to an asymptotically hyperbolic 
metric on $M_0$ by 
\begin{align}
\tilde{g} = g + \chi ( x \epsilon^{-1}) ( \tilde{g}_{\infty} - g_{\infty}).
\end{align}
Our non-existence result in the AHE case is the following. 
\begin{theorem} 
\label{theorem3}
Let $(M_0,g_0)$ be an asymptotically hyberbolic Einstein orbifold having a singular point $p_0$ with orbifold group $\Gamma_n \subset {\rm{SU}}(n)$, with 
conformal infinity $[ g_{\infty}]$ and $n \geq 3$. Assume that
\begin{align}
\label{obst3}
n \inn{\mathcal{R}(\omega),\omega}(p_0) + 2(n-2) R(p_0)  \neq 0.
\end{align}
Fix  $\delta_0 > 0 $ and $\delta_{\infty} > 0$ small.
Then given $\epsilon > 0$, there exists 
a constant $\delta > 0$ such that if 
\begin{align}
\Vert \tilde{g}_{\infty} - g_{\infty} \Vert_{C^{2}(\partial M)} < \delta,
\end{align}
then there is no AHE metric $g_E$ near $\tilde{g}_t$ satisfying
\begin{align}
\Vert  \nabla^{\ell} \big( \tilde{x}^{- \delta_{\infty}}( \tilde{g}_t - g_E) \big) \Vert_{C^0(g_t)} < \epsilon w^{- \delta_0- \ell}
\end{align} 
for $0 \leq \ell \leq 3$, and sufficiently small gluing parameter $t$.
\end{theorem}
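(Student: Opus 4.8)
The plan is to argue by contradiction, running the same pairing mechanism that underlies Theorem~\ref{theoremone} (and Biquard's argument \cite{Biquard} when $n=2$), now carried out in weighted spaces adapted to the AHE geometry and to the perturbed boundary data $\tilde{g}_\infty$. Suppose the conclusion fails: then there is an $\epsilon>0$ such that for every $\delta>0$ there exist boundary data $\tilde{g}_\infty$ with $\Norm{\tilde{g}_\infty-g_\infty}_{C^2(\partial M)}<\delta$ and, for arbitrarily small gluing parameter $t$, an AHE metric $g_E$ on $M^t$ near $\tilde{g}_t$ satisfying the stated bounds. Write $h=g_E-\tilde{g}_t$, which is small in the weighted norm dictated by the hypothesis on $\nabla^{\ell}\big(\tilde{x}^{-\delta_\infty}(\tilde{g}_t-g_E)\big)$. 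Imposing a gauge --- for instance the Bianchi gauge with respect to $\tilde{g}_t$, via the DeTurck trick --- the Einstein condition $Ric(g_E)=-(m-1)g_E$ becomes $\Phi_{\tilde{g}_t}(g_E)=0$ for the associated gauged Einstein operator, and Taylor expansion at $\tilde{g}_t$ gives
\begin{align}
\label{taylorgauged}
\Phi_{\tilde{g}_t}'(h) + \mathcal{E}_t + Q_{\tilde{g}_t}(h) = 0,
\end{align}
where $\mathcal{E}_t=\Phi_{\tilde{g}_t}(\tilde{g}_t)$ is the Einstein error of the approximate metric, $\Phi_{\tilde{g}_t}'$ is the linearized gauged operator (equal to $\tfrac12\nabla^*\nabla$ plus a zeroth-order curvature term on symmetric $2$-tensors), and $Q_{\tilde{g}_t}(h)$ is the quadratic remainder; the assumed $C^0$-bounds on $h$ and its first three covariant derivatives are exactly what is needed to make sense of, and estimate, each term of \eqref{taylorgauged} in the relevant weighted norms.

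The key input is a distinguished approximate cokernel element. Since $g_{cal}$ is a Ricci-flat K\"ahler ALE metric, its second-order deformation theory produces a $\mathrm{U}(n)$-invariant, trace-free, divergence-free symmetric $2$-tensor $\zeta$ on $X$ that solves the linearized Einstein equation and decays as $\zeta=O(r^{-2n})$ at infinity (matching the ALE expansion \eqref{ALEorder}); the asymptotics of $\zeta$ can be pinned down by reducing the linearized equation to an ODE via $\mathrm{U}(n)$-invariance, and $\zeta$ is the object responsible for the obstruction. Transplanting $\zeta$ to $M^t$ through $\Psi$, rescaling by $t$, and truncating with a cutoff supported in the gluing annulus $\tfrac12 t^{-1/4}<r<2t^{-1/4}$ produces a compactly supported tensor $\Xi_t$ on $M^t$, concentrated near $p_0$, for which $(\Phi_{\tilde{g}_t}')^{*}\Xi_t$ is supported in the truncation region and has controlled, small size there in the appropriate dual weighted norm. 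Pairing \eqref{taylorgauged} with $\Xi_t$ in $L^2(\tilde{g}_t)$ and integrating by parts --- no boundary terms arise, since $\Xi_t$ is compactly supported in the interior of $M^t$ --- yields the scalar identity
\begin{align}
\label{scalarid}
\big\langle h,\, (\Phi_{\tilde{g}_t}')^{*}\Xi_t\big\rangle_{L^2(\tilde{g}_t)} + \big\langle \mathcal{E}_t,\, \Xi_t\big\rangle_{L^2(\tilde{g}_t)} + \big\langle Q_{\tilde{g}_t}(h),\, \Xi_t\big\rangle_{L^2(\tilde{g}_t)} = 0.
\end{align}

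The heart of the matter is the evaluation of $\langle\mathcal{E}_t,\Xi_t\rangle$. Since the boundary perturbation $\chi(x\epsilon^{-1})(\tilde{g}_\infty-g_\infty)$ is supported near $\partial M_0$, we have $\tilde{g}_t=g_t$ on the support of $\Xi_t$, so there $\mathcal{E}_t$ coincides with the Einstein error of $g_t$: it equals $(m-1)\,t\,g_{cal}$ on the Calabi region $r<\tfrac12 t^{-1/4}$, and on the gluing annulus it is governed by $\phi_t^*g_0-t\,g_{cal}$. Substituting the Taylor expansion $\tilde{\Psi}^*\pi^*g_0=g_{euc}-\tfrac13 R_{ikjl}(p_0)\,z^kz^l+O(|z|^3)$ together with $z=\sqrt{t}\,x$, and using that $\zeta$ is trace-free and divergence-free, the annulus-plus-Calabi contribution reduces to a fixed, $t$-independent pairing of $\zeta$ against a quadratic curvature polynomial; carrying out the $\Gamma_n$-equivariant reduction and integrating against the explicit $\zeta$ gives
\begin{align}
\label{pairingvalue}
\big\langle\mathcal{E}_t,\Xi_t\big\rangle_{L^2(\tilde{g}_t)} = c\, t^{\beta}\Big(n\inn{\mathcal{R}(\omega),\omega}(p_0)+2(n-2)R(p_0)\Big) + o(t^{\beta})
\end{align}
for a universal nonzero constant $c$ and an exponent $\beta>0$ --- this is precisely the computation behind Theorem~\ref{theoremone}. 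Meanwhile, once the weighted estimates are in place, the first and third terms of \eqref{scalarid} are $o(t^{\beta})$ (the first because $(\Phi_{\tilde{g}_t}')^{*}\Xi_t$ is a very small cutoff error and $h$ is bounded, the third because it is quadratic in the controlled $h$), and the effect of the boundary perturbation --- entering only through the lower-order comparison of $\tilde{g}_t$ with $g_t$, since $\Xi_t$ vanishes on the collar --- is bounded by $C\delta\, t^{\beta}$. Substituting into \eqref{scalarid} and dividing by $t^{\beta}$, the hypothesis \eqref{obst3} is contradicted once $\epsilon$ is taken small, then $\delta$ small, then $t$ small.

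I expect the principal difficulties to be twofold. First is the computation \eqref{pairingvalue}: one must describe the deformation $\zeta$ of the Calabi metric explicitly enough --- using $\mathrm{U}(n)$-invariance to reduce to an ODE and \eqref{ALEorder} to fix its asymptotics --- and then perform the $\Gamma_n$-equivariant bookkeeping that collapses the raw curvature pairing into exactly the combination $n\inn{\mathcal{R}(\omega),\omega}(p_0)+2(n-2)R(p_0)$ with a nonzero coefficient, rather than some other contraction of the curvature. Second, and specific to the AHE non-existence statement, is arranging the weighted analysis so that the powers of $t$ genuinely separate: one must check that the Calabi-region error $(m-1)t\,g_{cal}$ (which is not manifestly integrable-small), the quadratic term $Q_{\tilde{g}_t}(h)$, and the boundary contribution all fall strictly below the order $t^{\beta}$ of the obstruction term, which is where the precise form of the hypotheses on $\tilde{x}^{-\delta_\infty}(\tilde{g}_t-g_E)$, the two weight exponents $\delta_0,\delta_\infty$, and the smallness of $\delta$ are used.
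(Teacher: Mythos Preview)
Your proposal is essentially correct and shares the same core mechanism as the paper's proof: pairing the gauged Einstein equation against the truncated obstruction tensor $\chi_t o$ (your $\Xi_t$) and showing that the leading contribution is exactly the curvature combination in \eqref{obst3}. The computation you call \eqref{pairingvalue} is precisely what the paper establishes in Proposition~\ref{limitobs} together with the reformulation in Section~\ref{reformulation}, so that part is not new work.

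Where your route differs from the paper is in the packaging. The paper does not perform the direct pairing of \eqref{scalarid} against a hypothetical Einstein metric. Instead it first passes from the na\"ive metric $\tilde{g}_t$ to the refined metric $g_t$ of \eqref{rmdef} (these are close in $C^{2,\alpha}_{\delta_0,\delta_\infty;t}$), then invokes the full Lyapunov--Schmidt reduction of Proposition~\ref{obs}: the gauge-fixed $\varphi^*g_E$ must be the unique solution $\hat{g}_{t,v}$ of $\Phi_{g_t}(\cdot)\in\mathcal{O}_t$, whence $\lambda(t)=0$, contradicting the expansion \eqref{lexp}. This is cleaner because the estimates separating the orders of $t$ (your second anticipated difficulty) are already built into Proposition~\ref{obs}, and the refined metric kills the damage-zone error down to $O(t^2\rho^2)$ rather than leaving the raw curvature mismatch you would have to handle directly. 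Your approach is more self-contained but forces you to redo those estimates by hand.

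One point to tighten: the gauge-fixing step is not the DeTurck trick (which produces solutions) but rather the construction of a diffeomorphism $\varphi$ such that $B_{g_t}(g_t-\varphi^*g_E)=0$ while $\varphi^*g_E$ stays close in the weighted norm; this is the content of \cite[Section~8]{Biquard} and requires its own argument, which you should cite or reproduce rather than fold into the DeTurck language.
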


Using this result, we have the following ``non-example''.
\begin{nexpl}{\rm
Let $(\mathbb{D}^{2n}, g_H)$ denote the hyperbolic space of real dimension $2n$. 
This is an AHE metric, with conformal infinity the round sphere $S^{2n-1}$. 
The group $\Gamma_n$ acts on $\mathbb{D}^{2n}$, and 
the hyperbolic metric descends to the quotient.
Since $g_H$ has constant curvature equal to $-1$, the obstruction \eqref{obst3}
does not vanish at the singular point. 
Therefore, there exists no AHE metric near the 
approximate metric $\tilde{g}_t$, 
with boundary conformal class near the round metric. }
\end{nexpl}

The obstruction can be interpreted as defining a ``wall'' in the space
of boundary conformal classes. 
That is, in the space of conformal infinities which have 
orbifold Einstein fillings, the ones which have the obstruction \eqref{obst2}
vanishing at the orbifold point should generically define a hypersurface. 
It is expected that only the conformal classes on
one side of this ``wall'' have smooth fillings. This has been 
completely analyzed by Biquard in the $4$-dimensional case \cite{Biquard2}. 
However, due to space limitations, 
we will not go into more detail about the wall-crossing 
in the higher-dimensional case in this paper. 

We end the introduction with an outline of the paper. 
In Section~\ref{Calabi} we will recall some basic details about the Calabi 
metric, then we will show that the space of 
decaying infinitesimal Einstein deformations is one-dimensional. 
In Section~\ref{linearized equation} we will identify the obstruction for finding a
solution of the gauged linearized Einstein equation
with prescribed quadratic leading term,
and also give a useful reformulation of the obstruction. 
In Section~\ref{approximate} we will define the ``refined'' approximate metric 
which is an improvement of the ``na\"ive'' approximate metric which 
was defined above. Then in Section~\ref{solutionsec} we will present the 
main Lyapunov-Schmidt reduction which yields a solution of the 
Einstein equation modulo the space of obstructions, which we 
will use to prove Theorem~\ref{theoremone}. 

 The remainder of the paper will then focus on the asymptotically hyperbolic Einstein
case. In Section~\ref{AHEsec}, we will give some background on the theory of 
AHE Einstein metrics, and in particular compute the indicial roots at 
infinity for several important operators. In Section~\ref{sectionk} we will 
define a mapping from germs to boundary tensors, which considers 
solutions which blow-up at $p_0$ at a certain rate with prescribed
leading term. This will be used in Section~\ref{sectionl}, together with a 
duality argument, to prove that there is a solution of the 
linearized equation with a certain prescribed quadratic leading term at $p_0$,
and which is bounded at infinity (in the interior metric). 
This solution will then be used in Section~\ref{completionsec} to prove the 
main existence theorem, Theorem~\ref{theorem2}. Finally, in Section~\ref{completionsec},
the non-existence result Theorem~\ref{theorem3} will be proved.  

\subsection{Acknowledgments}
The authors would like to express thanks to Olivier Biquard for many discussions about his work on desingularizing Einstein orbifolds in dimension four. Also, the authors are very grateful to Rafe Mazzeo for many crucial discussions about weighted spaces and Fredholm theory of AHE metrics. We would also like to thank the anonymous referee for numerous helpful comments which greatly improved the exposition of the paper. 
\section{Infinitesimal Einstein deformations of the Calabi metric}
\label{Calabi}
We begin with a brief description of the Calabi metric. 
\subsection{The Calabi metric}
The cone $\C^n / \Gamma_n$ admits a crepant resolution $X$ which is the 
total space of the bundle $\mathcal{O}(-n) \rightarrow \mathbb{P}^{n-1}$. 
In \cite{Calabi}, Calabi proved the following.
\begin{thm}The space
$X$ admits a ${\rm{U}}(n)$-invariant Ricci-flat K\"{a}hler ALE metric denoted by $g_{cal}$.
\end{thm}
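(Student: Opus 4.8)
The plan is to follow Calabi's original construction \cite{Calabi}: exhibit a $\mathrm{U}(n)$-invariant K\"ahler potential depending on a single radial variable, reduce the Ricci-flat Monge--Amp\`ere equation to an ODE, solve it explicitly, and then check separately that the resulting metric extends smoothly across the exceptional divisor and is ALE at infinity.

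First I would work on $X \setminus E$, where $E \cong \mathbb{P}^{n-1}$ is the exceptional set of the crepant resolution $\pi : X \to \C^n / \Gamma_n$. Since $\Gamma_n$ is central, $X \setminus E$ is biholomorphic to $(\C^n \setminus \{0\})/\Gamma_n$, and a $\mathrm{U}(n)$-invariant K\"ahler metric there has the form $\omega = \sqrt{-1}\,\partial \bar\partial P(t)$ with $t = \abs{z}^2$. A direct computation gives $g_{i\bar j} = P' \delta_{ij} + P'' \bar z^i z^j$, whose eigenvalues are $P'$ (multiplicity $n-1$) and $(tP')'$, so $\det(g_{i\bar j}) = (P')^{n-1}(tP')'$. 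On $X \setminus E$ the Ricci form is $-\sqrt{-1}\,\partial\bar\partial \log \det(g_{i\bar j})$ in the coordinate frame $dz^1 \wedge \cdots \wedge dz^n$ (which descends to $\C^n/\Gamma_n$ since $\zeta^n = 1$), and a $\mathrm{U}(n)$-invariant function whose logarithm is pluriharmonic must be constant; hence the Ricci-flat condition reduces to $(P')^{n-1}(tP')' = c_0$ for a positive constant $c_0$. Setting $y = tP'$ this integrates immediately to $y^n = c_0 t^n + a$, i.e. $P'(t) = (c_0 + a t^{-n})^{1/n}$. I would take $a > 0$ (the case $a = 0$ giving the flat cone) and normalize $c_0 = 1$ by scaling. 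Then $P' > 0$ and $(tP')' = (t^n + a)^{(1-n)/n} t^{n-1} > 0$, so $\omega$ is a genuine $\mathrm{U}(n)$-invariant Ricci-flat K\"ahler metric on $(\C^n \setminus \{0\})/\Gamma_n$; it extends to a Ricci-flat metric on all of $X$ once smoothness across $E$ is established, since $X\setminus E$ is dense.

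Next I would verify the two global features. At infinity, expanding $P'(t) = 1 + \tfrac{a}{n}t^{-n} + O(t^{-2n})$ and integrating gives $P(t) = \abs{z}^2 + O(r^{-2n+2})$ up to a constant, so $\omega = \sqrt{-1}\,\partial\bar\partial \abs{z}^2 + \sqrt{-1}\,\partial\bar\partial\, O(r^{-2n+2})$, which yields exactly the ALE decay $\Psi^*g_{cal} = g_{euc} + O(r^{-2n})$ of \eqref{ALEorder}, with group $\Gamma_n$ at infinity. For smoothness across $E$, I would use that near $E$ the space $(\C^n \setminus \{0\})/\Gamma_n$ is $\mathcal{O}(-n) \setminus E$ via the $n$-th power map on the fibers of $\mathcal{O}(-1)$, so in a local trivialization over a chart $U \subset \mathbb{P}^{n-1}$ with fiber coordinate $\eta$ one has $t^n = \abs{\eta}^2 e^{-n\varphi}$ with $\varphi$ a local potential for the Fubini--Study metric. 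From $P'(t) - a^{1/n} t^{-1} = t^{n-1}\tilde G(t^n)$ with $\tilde G$ smooth and $\tilde G(0) > 0$, one gets $P(t) = \tfrac{a^{1/n}}{n}\log(t^n) + H(t^n) + \mathrm{const}$ with $H$ smooth and $H'(0) > 0$; hence locally $\omega = a^{1/n}\omega_{\mathrm{FS}} + \sqrt{-1}\,\partial\bar\partial H(t^n)$, a smooth positive $(1,1)$-form in a neighborhood of $E$ restricting to $a^{1/n}\omega_{\mathrm{FS}}$ on $E$. Finally $\mathrm{U}(n)$-invariance is immediate since $P$ depends only on $t = \abs{z}^2$ and the $\mathrm{U}(n)$-action descends to $X$.

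The main obstacle is this last step, the smooth extension across $E$: the potential genuinely has a logarithmic singularity at $t = 0$, and one must use the precise relation between $t = \abs{z}^2$ and the fiber coordinate of $\mathcal{O}(-n)$ to check that after subtracting $\tfrac{a^{1/n}}{n}\log(t^n)$ the remainder is a smooth function of the smooth quantity $t^n$, so that the resulting form is an honest K\"ahler metric near $E$. Everything else — the ODE integration, the positivity, and the asymptotic expansion at infinity — is routine.
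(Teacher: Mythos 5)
Your proposal is correct and follows essentially the same route as the paper: both reduce the Ricci-flat condition for a $\mathrm{U}(n)$-invariant radial K\"ahler potential to the ODE $(F')^{n-1}(tF')'=\mathrm{const}$, solve it explicitly, and then observe that the resulting metric is ALE of order $2n$ and extends smoothly across the exceptional $\mathbb{P}^{n-1}$. You supply more detail than the paper on the smooth extension across $E$ and the ALE asymptotics (which the paper simply asserts), but the argument is the same.
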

\begin{proof}
This metric is well-known, so we just give here a formula for the Calabi metric in coordinates following \cite{Kuhnel, Salamon}.  
To do this, we need to define some 1-forms and vector fields which will also be used often later in the paper. Consider the following $1$-forms
\begin{align}
	dr,J(dr),
\end{align}
where $dr$ is the radial form and $J$ is the standard complex structure. We denote the dual vectors (with respect to the Euclidean metric) by
\begin{align}
	\p_{r},J(\p_{r}).
\end{align}
Let $\theta=-J(\frac{dr}{r})$ and $\p_{\theta}$ denote the dual of  $\theta$ (with respect to the Euclidean metric). It follows that 
\begin{align}
	J(r\p_{r})=\p_{\theta}.
\end{align}
The Calabi metric in radial 
coordinates is given by\footnote{There is a typo in \cite{Kuhnel} for $g_{cal}$ in radial form. } 
\begin{align}
g_{cal} = r^{2(n-1)}(1+r^{2n})^{\frac{1-n}{n}}dr^{2}+(1+r^{2n})^{\frac{1}{n}}g_{FS}+r^{2n}(1+r^{2n})^{\frac{1-n}{n}}\theta^{2},
\end{align}
where $g_{FS}$ denotes the Fubini-Study metric on $\mathbb{P}^{n-1}$. 
There is an apparent singularity at $r=0$, which can be resolved by replacing the 
origin with the $\mathbb{P}^{n-1}$ as the zero section of $\mathcal{O}(-n) 
\longrightarrow \mathbb{P}^{n-1}$, and the metric then extends smoothly to the resolution.
\end{proof}
We will next set some notation which will be used below. Let
$\omega$ denote the Fubini-Study $2$-form on ${\mathbb{P}}^{n-1}$.
Recall that on ${\mathbb{P}}^{n-1}$ we have
\begin{align}
\label{harmonicforms}
&d\theta=2\omega, \ d\omega=0,
\end{align}
where $\theta$ is viewed as a connection form for the Hopf fibration 
$S^{2n-1} \rightarrow \mathbb{P}^{n-1}$. Next, we define 
\begin{align}
\begin{split}
\label{norms}
A(r)=\frac{r^{n-1}}{(1+r^{2n})^{\frac{n-1}{2n}}}, \ B(r)=(1+r^{2n})^{\frac{1}{n}}
, \ C(r)=(1+r^{2n})^{\frac{1-n}{2n}}r^{n},
\end{split}
\end{align}
so that 
\begin{align}
&\Norm{Adr}_{g_{cal}}=\Norm{C\theta}_{g_{cal}}=1, \ \Norm{B\omega}_{g_{cal}}^2=n-1.
\end{align}
We also define
\begin{align} 
d \overbar{r} \equiv A dr, \ 
\overbar{\theta}  \equiv C \theta, \ 
\overbar\omega  \equiv B \omega, \ 
\overbar{g_{FS}}  \equiv B g_{FS},
\end{align}
so that the Calabi metric may be written as 
\begin{align}
g_{cal}= d\overbar{r}\otimes d\overbar{r} + \overbar{g_{FS}} + \overbar{\theta}\otimes\overbar{\theta}.
\end{align}

\subsection{Infinitesimal Einstein deformations}
\label{deformation}
In this section we prove that the space of decaying infinitesimal Einstein deformations of $(X,g_{cal})$ is one-dimensional and we explicitly identify a generator. 
We first define the operators that we need. Let $(M^{m},g)$ be a Riemannian manifold
of real dimension $m$. For a $1$-form~$\omega$, 
\begin{center}
\begin{tabular}{ll}
$\delta_g : \Omega^1(M) \rightarrow \Omega^0(M)$ & $\delta_g (\omega) = - g^{ij}\nabla_i \omega_j$\\
$\delta^{\ast}_g : \Omega^1(M) \rightarrow S^2(T^*M)$ &
$\delta^{\ast}_{g}(\omega)_{ij}=\frac{1}{2}(\nabla_{i}\omega_{j}+\nabla_{j}\omega_{i})$ \\
$\mathcal{K}_g : \Omega^1(M) \rightarrow S^2_0(T^*M)$ &
$\mathcal{K}_g (\omega) = \delta^{\ast}_g \omega + \frac{1}{m} \delta_g (\omega) g$.\\
\end{tabular} 
\end{center}
For a symmetric $2$-tensor $h$, 
\begin{center}
\begin{tabular}{ll}
$\delta_g : S^2(T^*M) \rightarrow \Omega^1(M)$ & $\big{(}\delta_{g}(h)\big{)}_{i}=-g^{jk}\nabla_{j}h_{ki}$\\
$B_g : S^2(T^*M) \rightarrow \Omega^1(M)$ &
$B_{g}(h)=\delta_{g}(h)+\frac{1}{2}d (tr_{g} h)$\\
$P_g : S^2(T^*M) \rightarrow S^2(T^*M)$ &
$P_{g}(h)=\frac{1}{2}\nabla^{\ast} \nabla h -\ring{R}(h)$,\\
\end{tabular}
\end{center}
where $\nabla^{*}$ is the formal $L^2$-adjoint of $\nabla$, 
and $\ring{R}$ is the action of curvature tensor on symmetric two tensors
defined by 
\begin{align}
\ring{R}(h)_{ij} = g^{kp} g^{lq} R_{ikjl} h_{pq}.
\end{align}
We note that the operator $\nabla^{\ast} \nabla = - \Delta $, is 
the negative rough Laplacian of $h$. For simplicity of notation we will often omit the metric $g$ when it is clear from context.
We also recall the following definition from \cite{Besse}.
\begin{deff}
\label{deform}
{\em
An {\textit{infinitesimal Einstein deformation}} of an Einstein metric $g$ is a symmetric $2$-tensor field $h$ such that
\begin{align}
\delta_{g}h=0, \ P_g(h)=0, \ tr_{g}(h)=0.
\end{align}	
}
\end{deff}
We say that a tensor $h$ is {\textit{decaying}} if $h = O(r^{-\epsilon})$ 
as $r \rightarrow \infty$ for some $\epsilon >0$.  
\begin{theorem}
\label{EDCthm}
The space of decaying infinitesimal Einstein deformations of the Calabi 
metric is one-dimensional, and is spanned by the element 
\begin{align}
\label{odef}
	o=\frac{1}{1+r^{2n}}\Big(-\overbar{g_{FS}}+(n-1)d\overbar{r}\otimes d\overbar{r}+(n-1)\overbar{\theta}\otimes\overbar{\theta}\Big).
\end{align}
\end{theorem}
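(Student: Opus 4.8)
The plan is to reduce the PDE problem $\delta h = 0$, $P_g h = 0$, $\mathrm{tr}_g h = 0$ on the $\mathrm{U}(n)$-invariant Calabi metric to an ODE analysis by decomposing symmetric $2$-tensors according to the natural geometry of $X$. The Calabi metric has the warped-product form $g_{cal} = d\overbar r\otimes d\overbar r + \overbar{g_{FS}} + \overbar\theta\otimes\overbar\theta$, with $\mathbb{P}^{n-1}$ base carrying the Fubini–Study metric and a circle direction $\overbar\theta$. First I would isolate those symmetric $2$-tensors $h$ on $X$ which are $\mathrm{U}(n)$-invariant and of the ``simplest'' algebraic type, namely those built from $d\overbar r\otimes d\overbar r$, $\overbar{g_{FS}}$, $\overbar\theta\otimes\overbar\theta$, and $d\overbar r\odot\overbar\theta$ with coefficients functions of $r$ alone; the ansatz \eqref{odef} is precisely of this shape with no cross term. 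The claim that the decaying infinitesimal Einstein deformation space is exactly one-dimensional then has two halves: (i) produce the explicit solution $o$, and (ii) show nothing else decays.

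For half (i), I would substitute the ansatz $h = f_1(r)\, d\overbar r\otimes d\overbar r + f_2(r)\,\overbar{g_{FS}} + f_3(r)\,\overbar\theta\otimes\overbar\theta$ into the three conditions. The trace-free condition $\mathrm{tr}_g h = 0$ gives one linear relation $f_1 + (n-1)\tfrac{f_2}{\,n-1\,}\cdot(\text{norm factor}) + f_3 = 0$ among the coefficients (using $\Norm{B\omega}^2 = n-1$, so $\overbar{g_{FS}}$ has trace $n-1$), and $\delta_g h = 0$, being $\mathrm{U}(n)$-invariant, reduces to a single first-order ODE in $r$. The operator $P_g h = \tfrac12\nabla^*\nabla h - \ring R(h) = 0$ then becomes a second-order linear ODE system, which the relations above collapse to essentially one scalar ODE. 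I expect the curvature action $\ring R(h)$ to be computable at the point $(r,0,\dots,0)$ using $\mathrm{U}(n)$-invariance together with the known curvature of the Fubini–Study metric and O'Neill-type formulas for the Hopf fibration, exactly as in the computation of $g_{cal}$ itself in the proof above. Verifying that \eqref{odef} — i.e. the coefficient tuple $\tfrac{1}{1+r^{2n}}(-1,\,n-1,\,n-1)$ after normalization — solves this ODE and decays like $r^{-2n}$ is then a direct (if tedious) check.

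For half (ii), the uniqueness, the key point is a separation-of-variables / indicial-root argument. Expand an arbitrary decaying $h$ with $\delta h = 0$, $\mathrm{tr}\, h = 0$, $P_g h = 0$ in eigentensors of the relevant Laplace-type operators on the compact fiber $S^{2n-1} \to \mathbb{P}^{n-1}$ (or directly on $\mathbb{P}^{n-1}$ with the circle action). On each such mode $P_g h = 0$ becomes a second-order ODE in $r$ whose indicial roots at $r\to\infty$ I can read off, since $g_{cal}$ is asymptotic to the Euclidean cone metric on $\C^n/\Gamma_n$ where $P_g$ is the Euclidean Lichnerowicz operator with explicitly known indicial roots. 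One shows that for all modes except the trivial (constant-on-fiber) one the only solution decaying at infinity also blows up at the zero section $\mathbb{P}^{n-1}$ (or fails smoothness across it), hence must vanish; and on the trivial mode the ODE analysis, combined with the gauge and trace conditions and regularity at $r = 0$ where the $\mathbb{P}^{n-1}$ sits, leaves exactly the one-parameter family spanned by $o$. The main obstacle, and where most of the work lies, is controlling the behavior across the zero section: one must check that a putative extra decaying deformation actually extends smoothly over $\mathbb{P}^{n-1}$, which requires matching the ODE solutions to the allowed asymptotics at $r = 0$ dictated by smoothness of symmetric $2$-tensors on the total space of $\mathcal{O}(-n)$ — this boundary-regularity bookkeeping at both ends of the $r$-interval is the crux, the rest being the (nontrivial but mechanical) curvature and ODE computations.
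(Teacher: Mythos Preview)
Your approach is genuinely different from the paper's and, while plausible in outline, has a real gap in the uniqueness half.

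The paper never touches the ODE/separation-of-variables machinery you describe. Instead it exploits the K\"ahler structure of $g_{cal}$: any infinitesimal Einstein deformation $h$ splits as $h = h_+ + h_-$ into $J$-invariant and $J$-anti-invariant parts, and by a standard identity (from Besse) the associated $(1,1)$-form $\Omega$ and the section $I \in \Gamma(\Lambda^{0,1}\otimes T^{1,0})$ are both harmonic. Since the canonical bundle is trivial, the conjugate Hodge star sends $I$ to a decaying harmonic $(1,n-1)$-form; Joyce's Hodge theorem for ALE spaces then identifies the decaying harmonic $(p,q)$-forms with $H^{p,q}(X)$, and since $H^n(X,\C) = 0$ one gets $h_- = 0$. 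The same theorem gives $\dim H^{1,1}(X) = 1$, so $h_+$ lives in a one-dimensional space, generated by the explicit closed and co-closed form $\Omega$ of Proposition~\ref{11c}; the tensor $o$ is then simply $\Omega(J\cdot,\cdot)$. No ODE analysis, no indicial roots, no regularity matching at the zero section.

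Your part~(i) is fine in spirit. The gap is in part~(ii): the assertion that ``for all modes except the trivial one the only solution decaying at infinity also blows up at the zero section'' is doing all the work and you give no mechanism for it. Carrying this out would require the full decomposition of symmetric $2$-tensors on $S^{2n-1}/\Gamma_n$ into irreducibles under the isometry group, the action of $\ring R$ on each piece, and a separate ODE plus boundary-regularity analysis at $r=0$ for every mode --- a substantial undertaking, and one where it is not at all clear a priori that no other mode survives. The K\"ahler reduction bypasses this entirely: once the problem is converted to harmonic forms, the dimension count is topology and comes for free from Joyce's theorem.
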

To prove this,  we will first characterize the decaying harmonic $(1,1)$-forms on the Calabi metric, which will then be used to characterize the 
decaying infinitesimal Einstein deformations. 
Recall that $n$ denotes the complex dimension, while $m = 2n$ denotes the real dimension. 
\begin{prop}
\label{11c}
On $(X, g_{cal})$, the space of decaying harmonic $(1,1)$-forms is one-dimensional 
and is spanned by the element
\begin{align}
\label{omegadef}
\Omega=\frac{1-n}{1+r^{2n}}d\overbar{r}\wedge\overbar{\theta}+\frac{1}{1+r^{2n}}\overbar{\omega}.
\end{align}
\end{prop}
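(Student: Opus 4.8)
The plan is to exploit the cohomogeneity-one structure of the Calabi metric: since $g_{cal}$ is $\mathrm{U}(n)$-invariant and the principal orbit is the Hopf fibration $S^{2n-1} \to \mathbb{P}^{n-1}$, any $\mathrm{U}(n)$-invariant $(1,1)$-form can be written, in the frame $d\overbar{r}, \overbar{\theta}, \overbar\omega$, as $\Omega = f(r)\, d\overbar{r} \wedge \overbar{\theta} + h(r)\, \overbar\omega$ for functions $f, h$ of the radial variable alone (the Fubini-Study form $\omega$ being the unique invariant primitive harmonic $(1,1)$-form on $\mathbb{P}^{n-1}$, and $d\overbar{r}\wedge\overbar{\theta}$ the only other invariant $(1,1)$ combination). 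First I would reduce to the invariant case: averaging a decaying harmonic form over $\mathrm{U}(n)$ preserves harmonicity, decay, and the $(1,1)$-type, and since $b^2(X) = 1$ with generator represented by (a multiple of) $\omega$, plus the fact that $H^2_{L^2}$ of an ALE space injects into ordinary $H^2$, the full space of decaying harmonic $(1,1)$-forms is detected by its invariant part; so it suffices to solve the ODE system for $(f, h)$.

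Next I would write out $d\Omega = 0$ and $d^*\Omega = 0$ (equivalently $\Delta\Omega = 0$ together with closedness, or use that a harmonic form on a complete manifold that is $L^2$, or merely decaying with enough rate on an ALE end, is both closed and coclosed). Using $d\overbar{r} = A\,dr$, $\overbar\theta = C\theta$, $\overbar\omega = B\omega$, together with the structure equations $d\theta = 2\omega$, $d\omega = 0$ from \eqref{harmonicforms}, and the explicit formulas for $A, B, C$ in \eqref{norms}, the conditions $d\Omega = 0$ and $\delta_{g_{cal}}\Omega = 0$ become a pair of coupled first-order linear ODEs in $f$ and $h$. Closedness gives one relation of the schematic form $(\text{something})' \sim h$ and $h' \sim$ (something), which I expect to decouple into a single second-order ODE for, say, $h$, or more cleanly into a first-order relation once one uses that $d(C^{-1}\overbar\theta) = d\theta = 2\omega = 2B^{-1}\overbar\omega$. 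Solving this ODE produces a two-dimensional solution space; the boundary conditions then cut it down: smoothness across the zero section $\mathbb{P}^{n-1}$ at $r = 0$ (where $\overbar\theta$ and $d\overbar{r}$ degenerate) kills one solution, and the decay requirement $\Omega = O(r^{-\epsilon})$ as $r \to \infty$ kills another — or rather, these two conditions are compatible on exactly a one-dimensional subspace, and I would check that the surviving solution is precisely $f = (1-n)/(1+r^{2n})$, $h = 1/(1+r^{2n})$, i.e. the stated $\Omega$.

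Finally I would verify directly that the claimed $\Omega$ in \eqref{omegadef} is indeed closed and coclosed (this is a short check that also serves to fix normalization and sign conventions), and confirm it extends smoothly over the zero section — near $r = 0$ one has $1 + r^{2n} \to 1$, $A \sim r^{n-1}$, $C \sim r^n$, so $f\, d\overbar{r}\wedge\overbar\theta \sim (1-n) r^{2n-1}\, dr \wedge \theta$ and $h\,\overbar\omega \sim \omega$, which matches a smooth $(1,1)$-form on the $\mathcal{O}(-n)$ total space — and that it decays like $r^{-2n}$ at infinity, hence is genuinely $L^2$ on the $2n$-dimensional ALE end. The main obstacle I anticipate is purely computational: correctly deriving the radial ODE system, since it requires careful bookkeeping of the warping functions $A, B, C$ and their derivatives when computing $d$ and $\delta$ of a form written in the orthonormal-type coframe rather than the coordinate coframe; in particular the coclosedness condition $\delta\Omega = 0$ involves the volume distortion and the $r$-dependence of the coframe, which is where sign and factor errors tend to creep in. A secondary subtlety is justifying rigorously that decay (rather than full $L^2$) suffices to force both closedness and coclosedness, and that the $\mathrm{U}(n)$-averaging argument loses no solutions — this should follow from the ALE Hodge theory (e.g. as in the references on ALE spaces) but deserves an explicit sentence.
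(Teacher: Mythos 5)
Your proposal is correct in outline but takes a genuinely different route from the paper, and it is worth comparing the two. The paper gets the dimension count directly from Joyce's ALE Hodge theory: it invokes Joyce's Theorem 8.4.2 to obtain the type decomposition $H^2(X,\mathbb{C}) = H^{2,0}\oplus H^{1,1}\oplus H^{0,2}$ at the level of decaying harmonic forms, concludes $\dim H^{1,1} = 1$ since $b^2(X)=1$ and the generator is of type $(1,1)$, and then simply \emph{verifies} that the explicit $\Omega$ of \eqref{omegadef} is closed, co-closed (via the Hodge star formula \eqref{Hodgekahler} and structure equations \eqref{harmonicforms}) and smooth across the zero section. Your plan instead \emph{derives} $\Omega$ by reducing to $\mathrm{U}(n)$-invariant forms and solving the resulting radial ODE system; the $b^2(X)=1$ plus $H^2_{L^2}\hookrightarrow H^2$ injectivity serves to show averaging loses nothing. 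This is more constructive but also more work, and the averaging step is arguably redundant: once you have the injection of decaying harmonic $(1,1)$-forms into $H^2(X,\mathbb{C})\cong\mathbb{C}$, the upper bound $\dim\le 1$ is immediate without any invariance reduction, and the ODE analysis then only reproduces a formula that can instead be checked by hand. Your secondary worry --- that mere decay (rather than full $L^2$) should suffice to force $d\Omega=0=\delta\Omega$ --- is handled in the paper by citing a decay-improvement argument (following Han--Viaclovsky) showing that any decaying harmonic form on an ALE end of real dimension $2n$ automatically decays like $O(r^{1-2n})$, which is $L^2$; so the Gaffney-type conclusion applies. In short: both arguments rest on the same ALE Hodge-theoretic inputs and both verify the same explicit form, but the paper skips the cohomogeneity-one ODE entirely, which keeps the proof shorter and avoids the bookkeeping with $A,B,C$ that you correctly identify as the main computational hazard.
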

\begin{proof} 
Define
\begin{align}
\mathcal{H}^{p,q} = \{ \eta \in C^{\infty}(\Lambda^{p,q} X) \ | \
 \eta = O(r^{1-2n}), d \eta = 0, \delta \eta = 0 \},
\end{align}
and let $H^{p,q}(X)$ denote the image of $\mathcal{H}^{p,q}(X)$ in 
$H^{p+q}(X, \mathbb{C})$ under the natural mapping $\eta \mapsto [\eta]$.
By \cite[Theorem 8.4.2]{Joyce}, the decomposition
\begin{align}
H^2(X, \mathbb{C}) = H^{2,0} \oplus H^{1,1} \oplus H^{0,2}
\end{align} 
holds, which implies that 
\begin{align}
\dim( H^{1,1}) = 1.
\end{align}
By a standard argument (see for example \cite[Section~3]{HanViaclovsky}), 
if $\eta \in C^{\infty} ( \Lambda^{p,q}X)$ satisfies 
$\eta = O(r^{-\epsilon})$ for some $\epsilon > 0$, then $\eta = O(r^{-2n+1})$
as $r \rightarrow \infty$. 

The remainder of the proof is therefore to show that the form in \eqref{omegadef} is 
both closed and co-closed, since it is obviously a $(1,1)$-form. 
Using \eqref{harmonicforms} it is easy to see that $\Omega$ is closed. To prove co-closedness, we recall that for a K\"ahler manifold $(M^{n},g_{0},J_{0},\omega_{0})$ ($n$ is complex dimension) we have the following ( see \cite{KM})
\begin{align}
\label{kal}
dV_{g_0} = \frac{\omega_{0}^{n}}{n!}.
\end{align}
Using \eqref{kal} it follows that
\begin{align}
\label{Hodgekahler}
\begin{split}
	&\ast \omega=\frac{1}{(n-2)!}B^{n-3}\omega^{n-2}\wedge d\overbar{r}\wedge \overbar{\theta}\\
& dV_g=\frac{B^{n-1}\omega^{n-1}}{(n-1)!}\wedge d\overbar{r}\wedge \overbar{\theta}.
\end{split}
\end{align}
Now straightforward calculation using \eqref{harmonicforms} and 
\eqref{Hodgekahler} shows that
\begin{align}
	\delta \Omega=-\ast d \ast \Omega=0.
\end{align}
Finally, it is easy to check that this form extends across the zero section
to define a smooth form on $X$. 
\end{proof}

Next, we complete the proof of Theorem \ref{EDCthm}.
If $h$ is an infinitesimal Einstein deformation, then 
write 
\begin{align}
h = h_+ + h_-, 
\end{align}
where $h_+$ is the $J$-invariant part, and $h_{-}$ is the $J$-anti-invariant part. 
It is proved in \cite{Besse} that 
\begin{align}
\Delta \Omega = 0, \\
\Delta I = 0, 
\end{align}
where $\Omega$ is the $(1,1)$-form associated to $h_+$,
and $I$ is the section of $\Lambda^{0,1} \otimes T^{1,0}$ 
associated to $h_-$. Since the canonical bundle is 
trivial, the conjugate Hodge star operator applied to $I$ is a
harmonic $(1,n-1)$-form. By \cite[Theorem 8.4.2]{Joyce}, this implies 
that $h_- \equiv 0$. 

 By Proposition \ref{11c}, the space of decaying infinitesimal Einstein 
deformations is spanned by the symmetric $2$-tensor
 \begin{align}
 	o(.,.)=\Omega(I(.),.).
 \end{align}
Noting that $I(\overbar{\p_{r}})=\overbar{\p_{\theta}}$ , where $\overbar{\p_{\theta}},\overbar{\p_{r}}$ are the duals of $\overbar{\theta},d\overbar{r}$ with respect to the Calabi 
metric, we obtain
\begin{align}
o(\overbar{\p_{\theta}},\overbar{\p_{\theta}})
= o(\overbar{\p_{r}},\overbar{\p_{r}}) =\frac{n-1}{1+r^{2n}}.
\end{align}
Also recall that $\omega$ is the Fubini-Study form, and that 
$I$ agrees with standard complex structure on ${\mathbb{P}}^{n-1}$, so
\begin{align}
	\omega(.,.)=g_{FS}(I(.),.)\implies \omega(I(.),.)=-g_{FS}(.,.)\implies \overbar{\omega}(I(.),.)=-\overbar{g_{FS}}(.,.)
\end{align}
(for $\overbar{\omega}$ we have $\inn{\overbar{\omega},\overbar{\omega}}_{\text{cal}}=n-1$) and $o$ becomes
\begin{align}
	o=\frac{1}{1+r^{2n}}\Big(-\overbar{g}_{FS}+(n-1)d\overbar{r}\otimes d\overbar{r}+(n-1)\overbar{\theta}\otimes\overbar{\theta}\Big),
\end{align}
which is clearly traceless. 
Now observe that we can rewrite $o$ as
\begin{align}
\label{oform}
o =\frac{1}{1+r^{2n}}\Big(-g_{cal} +n(d\overbar{r} \otimes d\overbar{r})+n(\overbar{\theta}\otimes\overbar{\theta})\Big).
\end{align}
Also since $\Omega$ is divergence-free ($\Omega$ is co-closed) and $I$ is parallel, it follows that $o$ is divergence-free, so $o$ is both traceless and divergence-free, as required. This completes the proof of Theorem \ref{EDCthm}.   

We end this section with a slightly modified statement of Theorem \ref{EDCthm}. 
\begin{theorem}
\label{Pthm}
On $(X, g_{cal})$, the space of decaying solutions of 
\begin{align}
P_g h &= 0\\
tr_g h & =0,
\end{align} 
is one-dimensional, and is spanned by the element $o$. 
\end{theorem}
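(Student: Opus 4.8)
\textbf{Proof proposal for Theorem \ref{Pthm}.}

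The plan is to reduce the equation $P_g h = 0$, $\mathrm{tr}_g h = 0$ to the infinitesimal Einstein deformation equation of Theorem \ref{EDCthm} by establishing that any decaying solution of $P_g h = 0$ with $\mathrm{tr}_g h = 0$ is automatically divergence-free, i.e.\ $\delta_g h = 0$. Once that is shown, such an $h$ is an infinitesimal Einstein deformation in the sense of Definition \ref{deform}, and Theorem \ref{EDCthm} identifies the space of these as one-dimensional, spanned by $o$. Conversely, $o$ itself satisfies $P_g o = 0$ and $\mathrm{tr}_g o = 0$ (indeed it is a genuine infinitesimal Einstein deformation, as established in the previous section), so the span of $o$ is contained in the solution space. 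Hence the two coincide.

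The key step is therefore the implication $P_g h = 0,\ \mathrm{tr}_g h = 0 \Rightarrow \delta_g h = 0$ for decaying $h$ on the Ricci-flat Calabi metric. First I would recall the standard Bianchi-type identity on an Einstein (here Ricci-flat) manifold: applying the Bianchi operator $\delta_g$ to $P_g h$ produces, after commuting derivatives and using $Ric(g_{cal}) = 0$, an expression of the form $\delta_g P_g h = \tfrac{1}{2}\nabla^*\nabla(\delta_g h) + (\text{curvature terms acting on } \delta_g h)$, together with a term proportional to $d(\mathrm{tr}_g h)$ which vanishes by hypothesis. More precisely, on a Ricci-flat manifold one has the clean commutation formula relating $\delta_g P_g$ to the Lichnerowicz-type Laplacian acting on the $1$-form $\delta_g h$. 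Setting $\beta = \delta_g h$, this yields a second-order elliptic equation $\Delta_H \beta = 0$ (a Hodge/Bochner Laplacian type operator) for the $1$-form $\beta$, which moreover decays at infinity since $h$ does and the ALE coordinates control the derivatives via \eqref{ALEorder}.

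The remaining point is to rule out nontrivial decaying solutions of this equation for $\beta$. Here I would integrate by parts: pairing $\Delta_H \beta$ with $\beta$ over $X$ and using the decay of $\beta$ together with the ALE structure to kill the boundary terms at infinity (and noting the form extends smoothly across the zero section, so there is no inner boundary contribution), one obtains $\int_X |\nabla \beta|^2 = 0$ after absorbing the curvature term, or more directly $\beta$ is parallel and decaying, hence $\beta \equiv 0$. This forces $\delta_g h = 0$, completing the reduction. The main obstacle I anticipate is getting the commutation identity $\delta_g P_g$ exactly right on the Ricci-flat background --- in particular verifying that the only obstruction term is the $d(\mathrm{tr}_g h)$ piece (which vanishes) and that no uncontrolled curvature term spoils the integration-by-parts positivity --- and ensuring the weighted decay is strong enough that all boundary terms at infinity genuinely vanish; both are standard but require care with the ALE weights in \eqref{ALEorder}.
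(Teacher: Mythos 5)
Your approach is essentially the same as the paper's: both reduce to showing that $\beta := \delta_g h$ vanishes, by applying the Bianchi operator to $P_g h = 0$ (using Ricci-flatness and $\mathrm{tr}_g h = 0$) to obtain $P_1\beta = \tfrac12\Delta_H \beta = 0$, and then invoking Theorem~\ref{EDCthm}. Where the paper concludes by citing \cite[Theorem~8.4.1]{Joyce} that $X$ carries no decaying harmonic $1$-forms, you instead propose a direct integration by parts, and here there is a gap you flag but do not close. If $h = O(r^{-\epsilon})$ for some small $\epsilon>0$, then a priori $\beta = O(r^{-\epsilon-1})$, and the boundary term at radius $r$ in $\int_X \langle \nabla^*\nabla\beta,\beta\rangle$ is of size $O(r^{2n-2-2(\epsilon+1)}) = O(r^{2n-4-2\epsilon})$, which does \emph{not} tend to zero for $n\geq 3$ unless $\epsilon > n-2$. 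To make the integration by parts legitimate one must first bootstrap the decay rate (for instance, since $\Delta_H$ on $1$-forms on an ALE space has no exceptional weight in $(2-2n,0)$, any decaying harmonic $1$-form is actually $O(r^{1-2n})$), exactly as in the standard argument you allude to at the end of Proposition~\ref{11c}; or simply cite Joyce's theorem, which packages this for you. With that repair, your proof coincides with the paper's.
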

\begin{proof}
The only thing we need to prove is that a solution of $P_{cal} h = 0$
which is decaying must also be divergence-free. 
To see this, since Calabi metric is Ricci-flat we get the following for linearized operator
\begin{align}
	d_{{\rm cal}} Ric(h)=\frac{1}{2}\nabla^{\ast}_{{\rm cal}}\nabla_{{\rm cal}}(h)-\ring{R}(h)-\delta^{\ast}_{{\rm cal}}B_{{\rm cal}}(h).
\end{align}
Applying the divergence operator to the equation $P_{cal} h = 0 $
yields the equation
\begin{align}
P_1 \delta_g h = 0, 
\end{align}
where $P_1 \equiv B \delta^*$. 
Since $g_{cal}$ is Ricci-flat, this implies that 
\begin{align}
\Delta_H (\delta_gh) = 0 , 
\end{align}
where $\Delta_H$ is the Hodge Laplacian. 
By \cite[Theorem 8.4.1]{Joyce}, there are no decaying harmonic 
$1$-forms on $X$, which implies that $\delta_g h = 0$. 
\end{proof}
\section{Linearized equation on the Calabi metric}
\label{linearized equation}
In this section we apply a technique in \cite{Biquard} to find the obstruction for solving linearized equation with prescribed leading quadratic term. Let 
\begin{align}
\label{quadratictensor}
	H=H_{ijkl}x^{i}x^{j}dx^{k}\otimes dx^{l}
\end{align}
be a given quadratic tensor defined on $\R^{2n}$ which is symmetric in the first two and last two indices. 
We need the following basic lemma.
\begin{lem}
\label{elliptic}
Let $v$ be a symmetric $2$-tensor with $\Vert v \Vert_{g_{cal}}=O(r^{\delta})$ for some $\delta \in (2-2n,0)$. Then there exists $u$ with $\Vert u \Vert_{g_{cal}}
=O(r^{2+\delta})$ and $P_{g_{cal}}(u)=v$ if and only if
\begin{align}
	\inn{v,o}_{L^2(g_{cal})}=0.
\end{align}
\end{lem}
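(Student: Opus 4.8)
The plan is to recognize Lemma~\ref{elliptic} as a Fredholm alternative for the operator $P_{g_{cal}}$ acting between suitable weighted spaces on the ALE manifold $X$, together with a self-adjointness argument identifying the cokernel with the kernel. First I would set up weighted H\"older (or weighted Sobolev) spaces $C^{k,\alpha}_{\delta}(S^2 T^*X)$ adapted to the Calabi metric, with weight $\delta \in (2-2n,0)$ so that $\|u\|_{g_{cal}} = O(r^{\delta})$ corresponds to $u \in C^{k,\alpha}_{\delta}$. Since $g_{cal}$ is ALE of order $2n$, the operator $P_{g_{cal}} = \tfrac12 \nabla^*\nabla - \ring{R}$ is asymptotic at infinity to $\tfrac12 \nabla^*\nabla$ on flat $\R^{2n}/\Gamma_n$, whose indicial roots (rates of homogeneous harmonic symmetric $2$-tensors) are the integers; the only integer in the open interval $(2-2n,0)$ is avoided provided $2-2n < \delta < 0$ contains no integer, which for $n\ge 2$ forces $\delta\in(-1,0)$ — but more robustly, one uses that $P_{g_{cal}}$ is Fredholm on $C^{k,\alpha}_{\delta}$ for all $\delta$ not equal to an indicial root, and both $\delta$ and the ``reflected'' weight $2-2n-\delta$ (the natural dual weight in dimension $2n$ for a second-order operator) must be non-exceptional. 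This is exactly the setting of Lockhart--McOwen / standard ALE Fredholm theory, which I would cite.

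The main steps are then: (1) Invoke the Fredholm theory to conclude that $P_{g_{cal}}: C^{k+2,\alpha}_{2+\delta} \to C^{k,\alpha}_{\delta}$ has closed range of finite codimension, and that the range is precisely the $L^2(g_{cal})$-orthogonal complement of the kernel of the formal adjoint acting on the dual weighted space. (2) Observe that $P_{g_{cal}}$ is formally self-adjoint, so the relevant cokernel is $\ker\big(P_{g_{cal}}$ on $C^{\ell}_{-(2n-2)-\delta}\big)$; since $-(2n-2)-\delta < 0$ when $\delta > -(2n-2)$, and in fact $-(2n-2)-\delta \in (2-2n, 0)$ when $\delta\in(2-2n,0)$, any such cokernel element is itself a decaying solution of $P_{g_{cal}}h = 0$. (3) I still need $\operatorname{tr}_{g_{cal}} h = 0$ to invoke Theorem~\ref{Pthm}: here one uses that the image $v = P_{g_{cal}}(u)$ automatically satisfies the trace condition if we restrict attention to traceless tensors — more carefully, taking the trace of $P_{g_{cal}}(u) = v$ and using the Ricci-flatness gives $\tfrac12 \nabla^*\nabla(\operatorname{tr} u) = \operatorname{tr} v$, and if $v$ is traceless and $\operatorname{tr} u$ decays then $\operatorname{tr} u$ is a decaying harmonic function on the ALE space $X$, hence $\equiv 0$. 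So on the traceless subspace the cokernel is exactly the span of the decaying traceless solutions of $P_{g_{cal}}h=0$, which by Theorem~\ref{Pthm} is one-dimensional and spanned by $o$. (4) Conclude: $v \in \operatorname{Range}(P_{g_{cal}})$ with a solution $u$ of the stated decay rate if and only if $\inn{v,o}_{L^2(g_{cal})} = 0$. The decay rate $\|u\|_{g_{cal}} = O(r^{2+\delta})$ comes from the mapping property $C^{k+2,\alpha}_{2+\delta}\to C^{k,\alpha}_{\delta}$, possibly after noting no indicial roots lie between $2+\delta$ and the next threshold.

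The step I expect to be the main obstacle is handling the trace condition cleanly and making sure the weighted spaces are set up so that $o$ genuinely spans the cokernel — i.e. checking that $o \in C^{\ell}_{-(2n-2)-\delta}$ (which holds since $o = O(r^{-2n})$, far better than needed) and, conversely, that there are no \emph{extra} cokernel elements coming from non-decaying-but-dual-weight solutions or from the trace part. Concretely, the subtlety is that the Fredholm cokernel a priori consists of solutions of $P_{g_{cal}} h = 0$ in the weight class $(2-2n-\delta)$ rather than genuinely ``decaying'' ($O(r^{-\epsilon})$) ones; I would resolve this by noting that $2-2n-\delta<0$ already forces decay, and then invoke the improvement-of-decay argument (as cited via \cite{HanViaclovsky} / \cite{Joyce} in the proof of Proposition~\ref{11c}) to push any such solution into the genuinely decaying class where Theorem~\ref{Pthm} applies. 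A secondary technical point is the choice of $\delta$ avoiding indicial roots; since the hypothesis already restricts to the open interval $(2-2n,0)$ and the relevant exceptional weights are a discrete set, one can either assume $\delta$ non-exceptional or absorb this into the statement, but I would remark on it explicitly. Once these points are settled, the lemma follows directly from the standard ALE Fredholm package plus Theorem~\ref{Pthm}.
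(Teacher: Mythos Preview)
Your proposal is correct and is essentially the same approach as the paper's own proof, which consists of a single sentence citing the standard Fredholm theory of elliptic operators on weighted spaces (Bartnik). What you have written is a careful unpacking of exactly that package: Fredholmness of $P_{g_{cal}}$ between weighted spaces with $\delta\in(2-2n,0)$ avoiding indicial roots, formal self-adjointness identifying the cokernel with decaying solutions of $P_{g_{cal}}h=0$, and then Theorem~\ref{Pthm} (together with the vanishing of decaying harmonic functions for the trace part) to pin down that cokernel as $\mathbb{R}\cdot o$. Your concerns about the trace and about exceptional weights are legitimate but minor, and you have already indicated the correct resolutions.
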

\begin{proof}
This follows from standard theory of elliptic operators on weighted spaces (see \cite{bartnik} for example).
\end{proof}
Next, we have the following proposition. 
\begin{prop}
\label{limitobs}
Given ($H,\Lambda)$ with $H$ a $\Gamma_n$-invariant symmetric $2$-tensor on $\R^{2n}$ 
satisfying
 \begin{align}
 -\frac{1}{2}\Delta_{euc}H&=\Lambda g_{euc}\\
 B_{euc}H&=0,
 \end{align}
 there exists a solution $(h,\lambda)$ to 
\begin{align}
\label{oneone}
\begin{split}
P_{g_{cal}}(h)&=\Lambda {g_{cal}} + \lambda o \\
h&=H+O(r^{-2n+2+\epsilon})\\
B_{g_{cal}}(h)&=0,
\end{split}
\end{align}
if and only if
\begin{align}
\label{obstr}
\lambda = -\frac{1}{ \Vert o \Vert^{2}_{L^2}} \lim_{r\to\infty} \int_{S_{r}/\Gamma_{n}}\frac{n+1}{r}\inn{H,o}dS_{{S_{r}}/\Gamma_{n}}.
\end{align}
\end{prop}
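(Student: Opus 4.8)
\textbf{Proof proposal for Proposition \ref{limitobs}.}

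The plan is to first reduce the problem to an application of Lemma \ref{elliptic}, and then to extract the scalar $\lambda$ by a boundary integration (Pohozaev-type) argument. First I would set up the desired solution in the form $h = H + h'$, where $H$ is extended (using the cutoff $\chi$ or simply regarded as a global quadratic tensor on $X$ away from the exceptional set) and $h'$ is the correction term with $\|h'\|_{g_{cal}} = O(r^{-2n+2+\epsilon})$. Plugging this ansatz into the first equation of \eqref{oneone} and using that $g_{cal} = g_{euc} + O(r^{-2n})$ together with the ALE decay estimates \eqref{ALEorder}, the equation $P_{g_{cal}}(h) = \Lambda g_{cal} + \lambda o$ becomes $P_{g_{cal}}(h') = \Lambda g_{cal} + \lambda o - P_{g_{cal}}(H)$. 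Now $P_{g_{euc}}(H) = \tfrac{1}{2}\nabla^*\nabla H - \ring{R}_{euc}(H) = -\tfrac{1}{2}\Delta_{euc}H = \Lambda g_{euc}$ by hypothesis (the Euclidean curvature term vanishes), so the right-hand side is $\Lambda(g_{cal} - g_{euc}) + \lambda o + (P_{g_{euc}} - P_{g_{cal}})(H)$, which is $O(r^{-2n+2})$ — precisely in the weighted range $(2-2n, 0)$ covered by Lemma \ref{elliptic} once one also incorporates the $\lambda o$ term (note $o = O(r^{-2n})$, which is even more decaying, so it does not affect the functional-analytic setup).

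Next I would invoke Lemma \ref{elliptic}: a solution $h'$ with the stated decay exists if and only if the right-hand side is $L^2(g_{cal})$-orthogonal to $o$. This gives
\begin{align}
\lambda \|o\|_{L^2}^2 = - \big\langle \Lambda(g_{cal} - g_{euc}) + (P_{g_{euc}} - P_{g_{cal}})(H),\, o \big\rangle_{L^2(g_{cal})},
\end{align}
using that $\langle o, o\rangle$ is the coefficient of $\lambda$ and $\langle g_{cal}, o\rangle_{L^2} = 0$ since $o$ is traceless. The task is then to show the right-hand side equals the boundary integral in \eqref{obstr}. The idea here is the standard one: the integrand is a divergence up to the error terms, because $o$ is a (decaying) solution of the homogeneous equation $P_{g_{cal}}(o) = 0$ (Theorem \ref{Pthm}) and $H$ is an approximate solution. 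Concretely, one writes $\langle P_{g_{cal}}(H) - \Lambda g_{cal}, o\rangle_{L^2(B_R \setminus K)} - \langle H, P_{g_{cal}}(o) \rangle_{L^2(B_R \setminus K)}$, integrates by parts (Green's identity for the self-adjoint operator $P_{g_{cal}}$, which because $\ring R$ is a zeroth-order symmetric term reduces to Green's identity for $\tfrac12\nabla^*\nabla$), and takes $R \to \infty$. The interior contribution near $K$ vanishes because $o$ and its derivatives extend smoothly across the zero section and $H$ is bounded there (alternatively one arranges $H$ to be cut off), and the second term vanishes identically since $P_{g_{cal}}(o) = 0$. What survives is the boundary term on $S_R/\Gamma_n$:
\begin{align}
\lambda\|o\|_{L^2}^2 = -\lim_{R\to\infty}\int_{S_R/\Gamma_n}\Big( \langle \nabla_{\p_r} H, o\rangle - \langle H, \nabla_{\p_r} o\rangle \Big)\, dS,
\end{align}
and one then computes this boundary integrand explicitly. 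Using $H = H_{ijkl}x^ix^j dx^k \otimes dx^l$ (homogeneous of degree $2$, so $\nabla_{\p_r} H$ relates to $H$ by a factor involving $r^{-1}$) and $o = O(r^{-2n})$, $\nabla o = O(r^{-2n-1})$, the second term $\langle H, \nabla_{\p_r} o\rangle$ dominates — it is $O(r^2 \cdot r^{-2n-1}) = O(r^{1-2n})$ against a sphere of volume $O(r^{2n-1})$, giving an $O(1)$ limit, while $\langle \nabla_{\p_r}H, o\rangle = O(r \cdot r^{-2n}) = O(r^{1-2n})$ also survives. Carefully collecting the two contributions and using the explicit radial decay rate of $o$ (the $(1+r^{2n})^{-1}$ prefactor in \eqref{odef}) produces the combined coefficient $\tfrac{n+1}{r}$ in \eqref{obstr}. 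The main obstacle is precisely this last bookkeeping step: one must be careful that $g_{cal} - g_{euc} = O(r^{-2n})$ and $(P_{g_{cal}} - P_{g_{euc}})(H) = O(r^{-2n+2})$ error terms, when paired with $o = O(r^{-2n})$ and integrated, decay fast enough to contribute nothing in the limit (the worst term is $O(r^{-4n+2})$ against volume $O(r^{2n-1})$, i.e. $O(r^{-2n+1}) \to 0$ for $n \geq 3$), so that only the two boundary terms above remain, and then to verify the algebra that assembles them into the single coefficient $(n+1)/r$. This requires knowing $\nabla_{g_{cal}} o$ to leading order, which follows from \eqref{odef} and the asymptotics $d\overbar r \to dr$, $\overbar\theta \to r\theta$, $\overbar{g_{FS}} \to r^2 g_{FS}$ as $r \to \infty$.
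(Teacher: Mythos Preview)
Your approach is essentially identical to the paper's: write $h = \chi H + h'$, reduce to Lemma~\ref{elliptic} applied to $v = -P_{g_{cal}}(\chi H) + \Lambda g_{cal} + \lambda o$, and compute $\langle P_{g_{cal}}(\chi H), o\rangle_{L^2}$ by integration by parts to extract the boundary term. Two points need correction.

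First, your displayed boundary formula is missing the factor $\tfrac12$ coming from $P = \tfrac12\nabla^*\nabla - \ring{R}$ (you mention it in words but drop it in the equation). With the $\tfrac12$ in place, and using $\nabla_{\vec n}\,o = -\tfrac{2n}{r}o + O(r^{-2n-2})$ and $\nabla_{\vec n}H = \tfrac{2}{r}H + O(1)$, the coefficient assembles as $\tfrac12\big(-\tfrac{2n}{r} - \tfrac{2}{r}\big) = -\tfrac{n+1}{r}$, matching \eqref{obstr}.

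Second, and more substantively, you never address the third condition $B_{g_{cal}}(h)=0$ in \eqref{oneone}. This does not come for free from Lemma~\ref{elliptic}; it requires a separate argument. The paper observes that since $g_{cal}$ is Ricci-flat, $B_{g_{cal}}P_{g_{cal}} = \tfrac12\nabla^*\nabla B_{g_{cal}}$, so $\omega := B_{g_{cal}}h$ is harmonic. From $h = H + O(r^{-2n+2+\epsilon})$ and $B_{euc}H=0$ one gets $\omega = O(r^{-2n+1+\epsilon})$; then integrating $\langle\omega,\Delta\omega\rangle = 0$ by parts over $B_r/\Gamma_n$ and letting $r\to\infty$ forces $\nabla\omega = 0$, hence $\omega \equiv 0$. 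You should include this step.
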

\begin{proof}
Let $(h,\lambda)$ be a solution to (\ref{oneone}). Let $R$ be a fixed positive real number and consider the following cut-off function
\begin{align}
\label{chico}
\chi(x) = \left\{
        \begin{array}{ll}
            1 & \quad\abs{x}>2R \\
            0 & \quad \abs{x}<R
        \end{array}
    \right.
.
\end{align}
Let $h' =h-\chi H$, then $h'=O(r^{-2n+2+\epsilon})$. We have
\begin{align}
	P_{g_{cal}}(h')=-P_{g_{cal}}(\chi H)+\Lambda g_{cal}+\lambda o.
\end{align}
Next, we claim that $\Lambda{g_{cal}}-P_{g_{cal}}(\chi H)=O(r^{-2n})$. To see this, 
\begin{align}
P_{g_{cal}}(\chi H)=-\frac{1}{2}\Delta(\chi H)-\ring{R}(H)=\Lambda g_{euc}+O(r^{-2n}),
\end{align}
where we used that $H_{iikl}=\Lambda \delta_{kl}$. 
Using the fact that $o$ is traceless,
\begin{align}
	\inn{P_{g_{cal}}(\chi H)-\Lambda {g_{cal}},o}=\inn{P_{g_{cal}}(\chi H),o}. 
\end{align}
Integrating by parts yields
\begin{align}
\int_{r\le r_{0}}\inn{P_{g_{cal}}(\chi H),o} dV_{g_{cal}}=\frac{1}{2}\int_{r=r_{0}}(\inn{\chi H,\nabla_{\vec{n}}o}-\inn{\nabla_{\vec{n}}(\chi H),o}) dS_{S_{r_{0}/\Gamma_{n}}}.
\end{align}
Notice that we have the following asymptotic relations
\begin{align}
&\nabla_{\vec{n}}o = \frac{-2n}{r} o + O ( r^{-2n-2})\\
&\nabla_{\vec{n}} (\chi H) = \frac{2}{r}H + O(1),
\end{align}
as $r \rightarrow \infty$, so taking the limit as $r_0 \to \infty$ we obtain
\begin{align}
\int_X \inn{P_{g_{cal}}(\chi H),o} dV_{g_{cal}}
=- \lim_{r\to\infty} \int_{S_{r}/\Gamma_{n}}\frac{n+1}{r}\inn{H,o}dS_{{S_{r}}/\Gamma_{n}}.
\end{align}
Now applying Lemma \ref{elliptic} we should have
\begin{align}
\label{ttt}
	\inn{-P_{g_{cal}}(\chi H)+\Lambda {g_{cal}}+\lambda o,o}=0 
\end{align}
which implies
\begin{align}
\lambda = -\frac{1}{ \Vert o \Vert^{2}_{L^2}} \lim_{r\to\infty} \int_{S_{r}/\Gamma_{n}}\frac{n+1}{r}\inn{H,o}dS_{{S_{r}}/\Gamma_{n}}.
\end{align}
Conversely, assume that \eqref{obstr} is satisfied.  This implies that \eqref{ttt} is satisfied and by Lemma~\ref{elliptic} there exist a solution $h^{\prime}$ for the following problem:
\begin{align}
-P_{g_{cal}}(\chi H)+\Lambda {g_{cal}}+\lambda o=P(h^{\prime}).
\end{align}
So $h=h^{\prime}+\chi H$ is the desired solution. Next, we show that $h$ is in Bianchi gauge. Since the Calabi metric is Ricci-flat, we have
\begin{align}
	0=B_{g_{cal}}P_{g_{cal}}h=\frac{1}{2}\nabla^{\ast}\nabla B_{g_{cal}}h.
\end{align}
Also, 
\begin{align}
	h=H+O(r^{-2n+2+\epsilon})\implies B_{g_{cal}} h=O(r^{-2n+1+\epsilon}).
\end{align}
Let $\omega=B_{g_{cal}}h$. Then integrating by parts implies
\begin{align}
	0=\int_{B_{r}/\Gamma_{n}}\inn{\omega,\Delta\omega}dV_{g_{cal}}=\int_{B_{r}/\Gamma_{n}}\abs{\nabla\omega}^{2}dV_{g_{cal}}-\int_{S_{r}/\Gamma_{n}}\inn{\omega,\nabla_{\vec{n}}\omega} dS,
\end{align}
and taking the limit as $r\to \infty$ shows that $\nabla\omega=0$ and the latter implies that $\omega=0$.
\end{proof}
\subsection{Reformulation of the obstruction}
\label{reformulation}

We begin with some integral identities.
\begin{lem}
\label{integralidentity}
On $\R^{2n}$ with coordinates $(x^{1},...,x^{2n})$ we have the following:
\begin{align}
\label{sb1} \int_{S^{2n-1}}x^{i}x^{j}dS&=\frac{\omega_{2n-1}}{2n}\delta_{ij}\\
\label{sb2}\int_{S^{2n-1}}x^{i}x^{j}x^{k}x^{l}dS&=\frac{\omega_{2n-1}}{2n(2n+2)}(\delta_{kl}\delta_{ij}+\delta_{ki}\delta_{lj}+\delta_{kj}\delta_{il}).
\end{align}
\end{lem}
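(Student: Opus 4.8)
The final statement to prove is Lemma~\ref{integralidentity}, the two integral identities for monomials over the unit sphere $S^{2n-1}$.

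\medskip

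\textbf{Plan.} The plan is to compute $\int_{S^{2n-1}} x^i x^j\, dS$ and $\int_{S^{2n-1}} x^i x^j x^k x^l\, dS$ by exploiting the $\mathrm{O}(2n)$-symmetry of the sphere to pin down the tensorial form of each integral, and then fix the single scalar constant in each case by contracting with $\delta$'s and using $\sum_i (x^i)^2 = 1$ on $S^{2n-1}$. First I would observe that the integral $T_{ij} = \int_{S^{2n-1}} x^i x^j\, dS$ is a symmetric $2$-tensor invariant under the orthogonal group (since $dS$ and $S^{2n-1}$ are $\mathrm{O}(2n)$-invariant and $x \mapsto Ax$ permutes the integrand entries accordingly); the only such tensors are multiples of $\delta_{ij}$, so $T_{ij} = c\, \delta_{ij}$. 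Taking the trace, $\sum_i T_{ii} = \int_{S^{2n-1}} \sum_i (x^i)^2\, dS = \int_{S^{2n-1}} 1\, dS = \omega_{2n-1}$, while $\sum_i c\,\delta_{ii} = 2n\, c$, giving $c = \omega_{2n-1}/(2n)$, which is \eqref{sb1}.

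\medskip

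For \eqref{sb2}, the same reasoning applies: $T_{ijkl} = \int_{S^{2n-1}} x^i x^j x^k x^l\, dS$ is an $\mathrm{O}(2n)$-invariant $4$-tensor, fully symmetric in its indices, so it must be a scalar multiple of the unique (up to scale) symmetric invariant $4$-tensor $\delta_{ij}\delta_{kl} + \delta_{ik}\delta_{jl} + \delta_{il}\delta_{jk}$; write $T_{ijkl} = c'(\delta_{ij}\delta_{kl} + \delta_{ik}\delta_{jl} + \delta_{il}\delta_{jk})$. To find $c'$, contract over $i=j$ and $k=l$: the left side becomes $\int_{S^{2n-1}} \big(\sum_i (x^i)^2\big)\big(\sum_k (x^k)^2\big)\, dS = \int_{S^{2n-1}} 1 \cdot 1\, dS = \omega_{2n-1}$, while the right side becomes $c'\big( (2n)(2n) + 2n + 2n \big) = c' \cdot 2n(2n+2)$, so $c' = \omega_{2n-1}/(2n(2n+2))$, which is \eqref{sb2}. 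Alternatively one can avoid invoking the classification of invariant tensors by computing the relevant one-dimensional radial integrals $\int_0^\infty r^{2n+1} e^{-r^2}\, dr$ and $\int_0^\infty r^{2n+3} e^{-r^2}\, dr$ after multiplying by a Gaussian and using $\int_{\mathbb{R}^{2n}} x^i x^j e^{-|x|^2}\, dx$ and $\int_{\mathbb{R}^{2n}} x^i x^j x^k x^l e^{-|x|^2}\, dx$, which factor into products of one-variable Gaussian moments; both routes give the same constants.

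\medskip

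\textbf{Main obstacle.} There is no real obstacle here; the only point requiring a word of care is the uniqueness of the symmetric $\mathrm{O}(2n)$-invariant $4$-tensor used in \eqref{sb2} (equivalently, that no extra structure survives the symmetrization), which is a standard fact from classical invariant theory and can, if one prefers, be sidestepped entirely by the Gaussian-moment computation. Accordingly I expect the proof in the paper to simply state these identities as a short computation, and the forthcoming argument will apply Lemma~\ref{integralidentity} to convert the boundary integral in \eqref{obstr} of Proposition~\ref{limitobs} into the explicit curvature expression $n\inn{\mathcal{R}(\omega),\omega}(p_0) + 2(n-2)R(p_0)$ appearing in the obstruction.
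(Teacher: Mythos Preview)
Your proof is correct and complete. The paper itself does not prove this lemma at all; it simply cites a reference (``This is proved, for example, in \cite{Brendle}''), so your symmetry-plus-contraction argument is in fact more detailed than what appears in the paper.
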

\begin{proof}
This is proved, for example, in \cite{Brendle}.
\end{proof}

Let $g$ be a Riemannian metric on $\R^{2n}$. The Taylor expansion of the metric in normal coordinates has the form
\begin{align}
	g_{kl}(z)=\delta_{kl}- \frac{1}{3} R_{ikjl}z^{i}z^{j}+O(\abs{z}^{3})
\end{align}
as $\abs{z}\to0$.  The quadratic term is not in Bianchi gauge, so 
we use the following lemma. 
\begin{lem} 
\label{coordlem}
Assume that $g_0$ is Einstein of dimension $2n$, 
with $Ric(g_0) = \Lambda g_0$. 
Then there exists a coordinate system near the origin such that 
\begin{align}
g_{kl}(z) = \delta_{kl}(0)+ H_{ijkl} z^i z^j +O(\abs{z}^{3}),
\end{align}
as $\abs{x}\to0$, where 
\begin{align}
\label{Hform}
H_{ijkl} z^i z^j = - \frac{1}{3} R_{ikjl}z^{i}z^{j}
- \frac{\Lambda}{3(n + 1)} ( \delta_{kl} |z|^2 + 2 z^k z^l).
\end{align}
\end{lem}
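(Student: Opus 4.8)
The plan is to start from geodesic normal coordinates centred at the point in question and to fix up the quadratic term by a single change of coordinates which is the identity to second order. Recall, as noted just above, the normal-coordinate expansion $g_{kl}(z)=\delta_{kl}-\tfrac13 R_{ikjl}z^iz^j+O(|z|^3)$, with the curvature evaluated at the origin; write $Q_{kl}:=-\tfrac13 R_{ikjl}z^iz^j$ for the quadratic term. In general $B_{euc}(Q)\neq 0$, and the goal is to add a pure-gauge piece so that the result is the $H$ of \eqref{Hform}.

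First I would record the transformation rule. Under $w^a=z^a+P^a(z)$ with $P^a$ homogeneous cubic, one has $z^a=w^a-P^a(w)+O(|w|^5)$ and $dz^a=dw^a-\partial_b P^a\,dw^b+\cdots$, and collecting the degree-two terms of $g_{kl}(z)\,dz^k\,dz^l$ shows that in the $w$-coordinates the quadratic term of the metric becomes $Q_{kl}-(\partial_k P_l+\partial_l P_k)=Q_{kl}-2(\delta^{\ast}_{euc}P^{\flat})_{kl}$, where $P^{\flat}=P_l\,dz^l$. I would then take the radial ansatz $P_l=\tfrac{\Lambda}{6(n+1)}|z|^2 z^l$. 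Since $\partial_k P_l=\tfrac{\Lambda}{6(n+1)}(2z^k z^l+|z|^2\delta_{kl})$ is symmetric in $k,l$, we get $2(\delta^{\ast}_{euc}P^{\flat})_{kl}=\tfrac{\Lambda}{3(n+1)}(\delta_{kl}|z|^2+2z^k z^l)$, hence the quadratic term in the new coordinates is exactly $-\tfrac13 R_{ikjl}z^iz^j-\tfrac{\Lambda}{3(n+1)}(\delta_{kl}|z|^2+2z^k z^l)=H_{ijkl}z^iz^j$, which is \eqref{Hform}. This proves the existence statement; note that the ansatz is $\mathrm{U}(n)$-equivariant, so the new coordinates still respect the orbifold structure in the intended application.

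It remains to see why this is the right normalization, namely that $B_{euc}(H)=0$, which is what makes $H$ admissible as the prescribed leading term in Proposition~\ref{limitobs}. I would check this directly from \eqref{euclideanbianchi}: writing $Q$ and the correction term as $H_{ijkl}z^iz^j$ with the coefficient tensor symmetrized in its first two slots, the contractions $-2H_{kjkl}+H_{jlkk}$ reduce — using only the symmetries of $R$ and the Einstein equation $Ric(g_0)=\Lambda g_0$ — so that $B_{euc}(Q)=-\tfrac{2\Lambda}{3}\,z^l\,dz^l$ while $B_{euc}$ of the correction term equals $+\tfrac{2\Lambda}{3}\,z^l\,dz^l$, and the two cancel. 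Equivalently, on $\R^{2n}$ one has $B_{euc}\delta^{\ast}_{euc}=-\tfrac12\Delta_{euc}$ on $1$-forms and $\Delta_{euc}(|z|^2 z^l)=4(n+1)z^l$, which is exactly what forces the coefficient $\tfrac{\Lambda}{6(n+1)}$; as a consistency check, tracing \eqref{Hform} via $\sum_i R_{ikil}=Ric_{kl}=\Lambda\delta_{kl}$ recovers $H_{iikl}=-\Lambda\delta_{kl}$, i.e. $-\tfrac12\Delta_{euc}H=\Lambda g_{euc}$, the hypothesis imposed in that proposition.

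The only real obstacle is the index bookkeeping in computing $B_{euc}(Q)$: the antisymmetries and pair-symmetry of $R_{ikjl}$ (or, with a different grouping of terms, the first Bianchi identity) have to be combined with $Ric(g_0)=\Lambda g_0$ in the right order, and one has to track the sign convention for $R_{ikjl}$ carefully so that the constant comes out as $+\tfrac{\Lambda}{3(n+1)}$ and not its negative. There is no conceptual difficulty beyond that.
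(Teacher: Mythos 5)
Your proof is correct and follows essentially the same route as the paper: normal coordinates, identify the discrepancy as $\delta^{*}_{euc}$ of a cubic vector field, and absorb it by a cubic diffeomorphism, with $B_{euc}(H)=0$ serving as the same consistency check. The paper simply cites a standard reference for the last step where you write out the change of coordinates $w=z+P(z)$ and the resulting transformation $Q_{kl}\mapsto Q_{kl}-2(\delta^*_{euc}P^{\flat})_{kl}$ explicitly; the coefficient $\tfrac{\Lambda}{6(n+1)}$ and all signs check out.
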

\begin{proof}
It is easy to verify that 
\begin{align}
B_{euc} (H_{ijkl} z^i z^j) = 0.
\end{align}
Finally, we note that  
\begin{align}
\delta_{euc}^* ( |z|^2 z^i) = \delta_{ij} |z|^2 + 2 z^i z^j,
\end{align} 
so the added term is a Lie derivative of a cubic vector field. 
A standard argument, see for example \cite[Section~7]{AcheViaclovsky},
implies that there is a coordinate system with the stated property. 
\end{proof}

\begin{prop}
Let $H=H_{ijkl}x^{i}x^{j}dx^{k}dx^{l}$ be the symmetric $2$-tensor with 
\begin{align}
H_{ijkl} = - \frac{1}{3} R_{ikjl} - \frac{\Lambda}{3(n+1)} 
(\delta_{ij} \delta_{kl} + 2 \delta_{ik} \delta_{jl} ).
\end{align}
Then the constant $\lambda$ in Proposition \ref{limitobs} is given by 
\begin{align}
\label{obst}
\lambda = \frac{1}{\Vert o \Vert_{L^2}} \frac{\omega_{2n-1}}{n}\Big(  \frac{2-n}{2} \Lambda - \frac{1}{8}  \inn{\mathcal{R}(\omega),\omega} \Big).
\end{align}
\end{prop}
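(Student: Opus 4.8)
The plan is to substitute the explicit form of $H$ from Lemma~\ref{coordlem} into the boundary-integral formula \eqref{obstr} of Proposition~\ref{limitobs} and evaluate the resulting integral over $S_r/\Gamma_n$ in the limit $r \to \infty$, using the asymptotic expansion $g_{cal} = g_{euc} + O(r^{-2n})$ together with the explicit decay of $o$. Since $o = O(r^{-2n})$ and the integrand carries a factor $r^{-1}$ while $dS_{S_r/\Gamma_n}$ grows like $r^{2n-1}$, only the leading behavior of $o$ matters: writing $o$ in Euclidean terms, from \eqref{oform} one has, as $r \to \infty$,
\begin{align}
o = \frac{1}{r^{2n}}\Big( - g_{euc} + n\, dr \otimes dr + n\, \overline{\theta} \otimes \overline{\theta} \Big) + O(r^{-4n}),
\end{align}
and $dr$, $\overline{\theta}$ are the Euclidean-normalized radial and Hopf coframe elements at infinity. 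The key reduction is that $n(dr\otimes dr + \overline\theta \otimes \overline\theta)$ is, up to the flat pieces, exactly the pullback of the Fubini–Study data; more precisely, on $S^{2n-1}$ one has $dr \otimes dr + \overline\theta \otimes \overline\theta$ is the span of the complex line spanned by $\partial_r$, so $\langle H, o\rangle$ decomposes into a trace-type term and a term detecting the $(1,1)$-component of the curvature.

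The main computational step is therefore: compute $\lim_{r\to\infty} \int_{S_r/\Gamma_n} \frac{n+1}{r}\langle H, o\rangle \, dS$. Rescaling to the unit sphere, this equals $\frac{n+1}{2n}\,\frac{1}{|\Gamma_n|}\int_{S^{2n-1}} \langle \tilde H, \tilde o\rangle\, dS$ where $\tilde H = H_{ijkl} x^i x^j \, \omega^k \omega^l$ evaluated on unit vectors and $\tilde o = -g_{euc} + n(dr\otimes dr) + n(\overline\theta\otimes\overline\theta)$ restricted to $S^{2n-1}$. Pairing against $-g_{euc}$ gives $-\mathrm{tr}_{euc} H = -H_{kjkl}(\text{something})$; pairing against $n\,dr\otimes dr$ picks out $n H_{ijkl} x^i x^j x^k x^l$ (radial-radial component); and pairing against $n\,\overline\theta\otimes\overline\theta$ picks out the $J\partial_r$–$J\partial_r$ component, namely $n H_{ijkl} x^i x^j (Jx)^k (Jx)^l$ where $(Jx)^k = J^k_{\ m} x^m$. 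Now I apply the integral identities in Lemma~\ref{integralidentity}: the quartic integrals $\int x^i x^j x^k x^l dS$ and $\int x^i x^j (Jx)^k (Jx)^l dS$ reduce to combinations of $\delta$'s and $J^k_{\ m}J^l_{\ m}$-type contractions, and the quadratic ones handle the trace term. Substituting $H_{ijkl} = -\tfrac13 R_{ikjl} - \tfrac{\Lambda}{3(n+1)}(\delta_{ij}\delta_{kl}+2\delta_{ik}\delta_{jl})$, the curvature contractions that survive are $\sum R_{ikil} \delta_{kl} = \mathrm{Ric}$-trace $= R$ (scalar curvature, here $= 2n\Lambda$) and $\sum R_{ikjl} J^{k}_{\ i} J^{l}_{\ j}$, which is exactly $\langle \mathcal{R}(\omega),\omega\rangle$ up to a universal constant since $\omega_{ij} = J_{ij}$ is the Kähler form of $J$ at $p_0$.

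The hard part will be bookkeeping the combinatorial constants: correctly tracking (i) the factor $|\Gamma_n| = n$ from the quotient and the sphere-volume normalization $\omega_{2n-1}/(2n(2n+2))$ in \eqref{sb2}, (ii) the sign and symmetry conventions so that $H_{ikjl} J^k J^l$ assembles into $\langle \mathcal{R}(\omega),\omega\rangle = \sum R_{ijkl}\omega_{ij}\omega_{kl}$ rather than some other contraction, and (iii) collecting the $\Lambda$-terms coming both from the explicit $\Lambda$-piece of $H$ and from the Bianchi-identity contraction of $R_{ikjl}$ (recall $R_{ikil}=R_{kl}=\Lambda\delta_{kl}$). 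After combining, the $\Lambda$-coefficient should organize into $\tfrac{2-n}{2}\Lambda$ and the curvature term into $-\tfrac18\langle\mathcal{R}(\omega),\omega\rangle$, each multiplied by $\frac{1}{\|o\|_{L^2}}\frac{\omega_{2n-1}}{n}$, as claimed in \eqref{obst}. I would double-check the final constants by testing against the flat case (where $R_{ijkl}=0$, $\Lambda=0$, forcing $\lambda = 0$, consistent) and against the round sphere or hyperbolic space, where $R_{ijkl} = \kappa(\delta_{ik}\delta_{jl}-\delta_{il}\delta_{jk})$ gives $\langle\mathcal{R}(\omega),\omega\rangle$ and $R$ both proportional to $\kappa$ and the combination must match the sign predictions in the ``non-examples'' of the introduction.
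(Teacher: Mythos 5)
Your proposal follows essentially the same route as the paper: substitute the explicit $H$ into the boundary-integral formula from Proposition~\ref{limitobs}, replace $o$ by its leading Euclidean form $o_{euc} = r^{-2n}\bigl(-g_{euc} + n\,dr\otimes dr + n r^2\theta\otimes\theta\bigr)$, decompose $\langle H, o_{euc}\rangle$ into the three corresponding pieces, and evaluate on the unit sphere using Lemma~\ref{integralidentity}, with the factor $1/n$ from the $\Gamma_n$-quotient. The only thing your outline glosses over that the paper makes explicit is the first Bianchi identity step converting the mixed contraction $J^p_k J^q_l R_{qkpl}$ into $\tfrac{1}{2}\langle\mathcal{R}(\omega),\omega\rangle$, which is needed to assemble all the curvature terms into the single invariant $\langle\mathcal{R}(\omega),\omega\rangle$; once that is included, the bookkeeping you describe does yield \eqref{obst}.
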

\begin{proof}
It follows from \eqref{oform} that in ALE coordinates, we have 
\begin{align}
o=\frac{1}{r^{2n}}\Big( -g_{euc} +n(d r \otimes d r)+nr^2 ( \theta \otimes \theta) \Big) + O(r^{-2n-1})
\end{align}
as $r \rightarrow \infty$. Defining 
\begin{align}
o_{euc} = \frac{1}{r^{2n}}\Big(-g_{euc} +n(d r \otimes d r)+nr^2( \theta \otimes \theta)\Big),
\end{align}
we have
\begin{align}
\lim_{r\to\infty} \int_{S_{r}/\Gamma_{n}}\frac{n+1}{r}\inn{H,o} dS_{{S_{r}}/\Gamma_{n}}
= \int_{\abs{x} = 1} (n+1)\inn{H,o_{euc}} d S_{{S_{r}}/\Gamma_{n}}.
\end{align}
The inner product is  
\begin{align}
\label{ipe}
	\inn{H,o_{euc}}=-\inn{H,g_{euc}}+n\inn{H,d r\otimes d r}+n\inn{H,r^2 \theta 
\otimes \theta}.
\end{align}
We lift up to $\mathbb{R}^{2n}$, and next we calculate each term in \eqref{ipe}; 
the first term is
\begin{align}
	\inn{H,g_{euc}}=H_{ijll}x^{i}x^{j},
\end{align}
so by \eqref{sb1}, 
\begin{align}
\label{com1}
\int_{|x| = 1}\inn{H,g_{euc}} dS = \frac{\omega_{2n-1}}{2n} H_{iikk}.
\end{align}
Also, since $r^{2}=(x^{i})^{2}$, we have
\begin{align}
rdr&=x^{i}dx^{i} \\
r^{2}dr\otimes dr&=x^{i}x^{j}dx^{i}dx^{j}.
\end{align}
Letting $J$ denote the standard complex structure on $\R^{2n}$, then $\theta=-J(\frac{dr}{r})$, so we have
\begin{align}
	(r\theta) \otimes (r\theta)=x^{p}x^{q}J_{k}^{p}J_{l}^{q}dx^{k}dx^{l}.
\end{align}
Clearly,
\begin{align}
r^{2}\inn{\theta\otimes \theta,H}&=x^{i}x^{j}x^{p}x^{q}J^{p}_{k}J^{q}_{l}H_{ijkl},\\
r^{2}\inn{dr\otimes dr,  H}&=x^{i}x^{j}x^{k}x^{l}H_{ijkl}.
\end{align}
Using \eqref{sb2}, this implies that 
\begin{align}
\label{com2}
\int_{\abs{x}=1} \inn{ H, d r\otimes d r} dS
= \frac{\omega_{2n-1}}{2n(2n+2)} ( H_{iikk} + H_{ikik} + H_{ikki}).
\end{align}
Also, \eqref{sb2} implies 
\begin{align}
\label{com3}
\int_{\abs{x}=1} \inn{ H, \theta \otimes \theta } dS
=\frac{\omega_{2n-1}}{2n(2n+2)}(J^{p}_{k}J^{p}_{l}H_{iikl}+J^{p}_{k}J^{q}_{l}H_{pqkl}+J^{p}_{k}J^{q}_{l}H_{qpkl}).
\end{align}
Putting together \eqref{com1}, \eqref{com2}, and \eqref{com3}, we obtain
\begin{align}
\begin{split}
\label{com4}
\int_{\abs{x}=1}(n+1)\inn{H,o_{euc}}dS =
-&\frac{\omega_{2n-1}}{2n}(n+1)H_{kkll}
+\frac{\omega_{2n-1}}{4}(H_{iikk} +  H_{ikik} + H_{ikki} )\\
+ &\frac{\omega_{2n-1}}{4} ( J^{p}_{k}J^{p}_{l}H_{iikl}+J^{p}_{k}J^{q}_{l}H_{pqkl}+J^{p}_{k}J^{q}_{l}H_{qpkl}).
\end{split}
\end{align}
Next we compute the terms involving $H$ appearing in \eqref{com4}.
First, 
\begin{align}
\label{tm1}
H_{kkll} &= - \frac{1}{3} R_{klkl} -  \frac{\Lambda}{3(n+1)} (\delta_{kk} \delta_{ll} 
+ 2 \delta_{kl} \delta_{kl})
= - 2n \Lambda.
\end{align}
The next two terms are
\begin{align}
\label{tm2}
H_{ikik} &= - \frac{1}{3} R_{iikk} -  \frac{\Lambda}{3(n+1)} ( \delta_{ik} \delta_{ik}
+ 2 \delta_{ii} \delta_{kk} ) = -  \frac{\Lambda}{3(n+1)} ( 2n  + 8 n^2 ),
\end{align}
and
\begin{align}
\label{tm3}
H_{ikki} &= - \frac{1}{3} R_{ikki} -  \frac{\Lambda}{3(n+1)} ( \delta_{ik} \delta_{ik}
+ 2 \delta_{ik} \delta_{ik} ) 
= \frac{2n \Lambda}{3} - \frac{ 2n \Lambda}{n+1}.
\end{align}
The first term involving $J$ is  
\begin{align}
\label{tm4}
J^{p}_{k}J^{p}_{l}H_{iikl} 
=  - 2n \Lambda.
\end{align}
The next term is
\begin{align}
\label{tm5}
J^{p}_{k}J^{q}_{l}H_{pqkl} 
=- \frac{1}{3} \inn{\mathcal{R}(\omega),\omega}
-  \frac{2n \Lambda}{3(n+1)}.
\end{align}
For the last term, we have
\begin{align}
\label{com35}
J^{p}_{k}J^{q}_{l}H_{qpkl} 
=  - \frac{1}{3} J^{p}_{k}J^{q}_{l}R_{qkpl}  
+  \frac{2n \Lambda}{3(n+1)}.
\end{align}
Note that using the Bianchi identity, we have
\begin{align}
\label{com5}
0 
=  J^{p}_{k}J^{q}_{l} R_{qkpl} -  J^{k}_{p}J^{q}_{l} R_{kpql} +  J^p_k J^l_q R_{pqlk}.
\end{align}
The first and third terms in the last line of \eqref{com5} are easily seen to be 
equal, thus we have 
\begin{align}
\label{com6}
 J^{p}_{k}J^{q}_{l} R_{qkpl} = \frac{1}{2}  \inn{\mathcal{R}(\omega),\omega}.
\end{align}
Plugging \eqref{com6} into \eqref{com35}, we obtain 
\begin{align}
\label{tm6}
J^{p}_{k}J^{q}_{l}H_{qpkl} & =  - \frac{1}{6}   \inn{\mathcal{R}(\omega),\omega} 
+  \frac{2n \Lambda}{3(n+1)}.
\end{align}
We now insert \eqref{tm1}, \eqref{tm2}, \eqref{tm3}, \eqref{tm4}, 
\eqref{tm5}, and \eqref{tm6} into \eqref{com4} to obtain 
\begin{align}
\begin{split}
\label{com8}
\int_{\abs{x}=1}(n+1)\inn{H,o}dS =
\omega_{2n-1}\Big(  \frac{2-n}{2} \Lambda - \frac{1}{8}  \inn{\mathcal{R}(\omega),\omega} \Big).
\end{split}
\end{align}
Finally, since we are integrating over $S^{2n-1}/ \Gamma$, where $\Gamma$ is of order $n$, the area of the unit sphere is $\frac{ \omega_{2n-1}}{n}$, which accounts for the extra factor of $\frac{1}{n}$ appearing in~\eqref{obst}. 
\end{proof}
\section{The refined approximate metric}
\label{approximate}
The approximate metric described in the introduction will be referred to as
the ``na\"ive'' approximate metric. In this section, we will construct a ``refined'' approximate metric, which will be necessary to compute the obstruction. 
The construction closely follows that in \cite{Biquard} with minor modifications. 
Recall that the Einstein orbifold is denoted by $(M_{0},g_{0})$ and
$(X,g_{cal})$ denotes the Calabi metric. Also, recall that $\{x^i\}, i = 1 \dots 2n,$ are ALE coordinates on $X$, and that $g_0$ satisfies
\begin{align}
     Ric(g_{0})=\Lambda g_{0}.
\end{align}
Near $p_0$, we use the coordinate system $\{z^i\}, i = 1 \dots 2n$,
from Lemma \ref{coordlem}, where $g_0$ takes the form 
\begin{align}
g_{0}= g_{euc}+H + o(|z|^2), 
\end{align}
with
\begin{align}
H=H_{ijkl}z^{i}z^{j}dz^{k}dz^{l},	
\end{align}
given by \eqref{Hform}. 

On the Calabi metric, using Proposition \ref{limitobs}, 
there exists a solution $(h,\lambda)$ for the following problem
\begin{align}
\label{eeee}
\begin{split}
P_{g_{cal}}Ric (h)&=\Lambda g_{cal}+\lambda o, \\
h&=H+O(r^{2-2n+\epsilon}),\\
B_{g_{cal}}(h) &= 0,
\end{split}
\end{align}	
for any $\epsilon > 0$, where $\lambda$ is given by \eqref{obst},
as $r \rightarrow \infty$. 
Define
\begin{align}
	h_{t}={g_{cal}} + th.
\end{align}
Consider the Calabi metric on $\C^{n}-\{0\}/\Gamma_{n}$ and define the following two regions
\begin{align}
&DZ^t \equiv \{ \frac{1}{2}t^{-\frac{1}{4}}<r<2t^{-\frac{1}{4}}\}\\
&{{X}}^{t} \equiv \{ r<2t^{-\frac{1}{4}}\},
\end{align}
(DZ is short for damage zone).
Now define the following function on $\C^{n}-\{0\}/\Gamma_{n}$
\begin{align}
	\rho=\begin{cases}
r\phantom{=}&\text{when $r>2$}\\
1\phantom{=}&\text{when $r\le 1$},
\end{cases}
\end{align}

\begin{lem}
\label{estimate}
For $t$ sufficiently small, $h_{t}$ is a Riemannian metric on ${{X}}^{t}$ and the following estimate holds on this region
\begin{align}
	\abs{\nabla^{k} (Ric_{h_{t}}-t\Lambda h_{t}-t\lambda o)}_{g_{cal}}\le c_{k}t^{2}\rho^{2-k},
\end{align}	
for some constants $c_{k}$ and any $k\ge 0$.
\end{lem}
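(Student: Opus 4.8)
The plan is to expand the Ricci curvature of $h_{t}=g_{cal}+th$ to second order in $t$ about the Ricci-flat metric $g_{cal}$, using the defining property \eqref{eeee} of $h$ to cancel the zeroth- and first-order terms against $t\Lambda g_{cal}+t\lambda o$, so that what is left is genuinely quadratic in $t$. The starting point is the weighted bound
\begin{align}
\label{planhbd}
\abs{\nabla^{k}h}_{g_{cal}}\le C_{k}\,\rho^{2-k},\qquad k\ge0,
\end{align}
valid on all of $X$, which follows by combining the quadratic growth in the ALE coordinates of $H=H_{ijkl}x^{i}x^{j}dx^{k}dx^{l}$, the decay $h-H=O(r^{2-2n+\epsilon})$ from \eqref{eeee} together with the analogous decay of its derivatives coming from weighted elliptic estimates, the ALE asymptotics \eqref{ALEorder} (so that on $\{r\ge1\}$ the operator $\nabla_{g_{cal}}$ differs from the coordinate derivative only by terms of order $O(r^{-2n-1})$ and smaller), and the smoothness of $h$ on the compact core $\{r\le1\}$, where $\rho\equiv1$. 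A consequence is that $\abs{th}_{g_{cal}}\le Ct\rho^{2}\le 4Ct^{1/2}$ on $X^{t}$, which tends to $0$ as $t\to0$; hence for $t$ small $h_{t}$, and in fact every $g_{cal}+sh$ with $s\in[0,t]$, is positive definite on $X^{t}$ and uniformly equivalent to $g_{cal}$ there. This proves the first assertion of the Lemma.

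Next I would apply Taylor's theorem with integral remainder to the map $s\mapsto Ric_{g_{cal}+sh}$ on $[0,t]$. Since $g_{cal}$ is Ricci-flat we have $Ric_{g_{cal}}=0$, and since $h$ is in Bianchi gauge the linearization satisfies $d_{g_{cal}}Ric(h)=P_{g_{cal}}(h)=\Lambda g_{cal}+\lambda o$ by \eqref{eeee} (using the identity $d_{g_{cal}}Ric=P_{g_{cal}}-\delta^{\ast}_{g_{cal}}B_{g_{cal}}$, as in the proof of Theorem~\ref{Pthm}). Therefore
\begin{align}
Ric_{h_{t}}=t(\Lambda g_{cal}+\lambda o)+Q_{t},\qquad Q_{t}:=\int_{0}^{t}(t-s)\,\frac{d^{2}}{ds^{2}}Ric_{g_{cal}+sh}\,ds.
\end{align}
Subtracting $t\Lambda h_{t}+t\lambda o=t\Lambda g_{cal}+t^{2}\Lambda h+t\lambda o$, the first-order terms cancel and we obtain
\begin{align}
\label{planreduce}
Ric_{h_{t}}-t\Lambda h_{t}-t\lambda o=Q_{t}-t^{2}\Lambda h.
\end{align}
By \eqref{planhbd}, $\abs{\nabla^{k}(t^{2}\Lambda h)}_{g_{cal}}\le C_{k}t^{2}\rho^{2-k}$ immediately, so it remains only to estimate $Q_{t}$.

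The crux is that the second variation $\frac{d^{2}}{ds^{2}}Ric_{g_{cal}+sh}$ is a universal pointwise algebraic expression: a finite sum of products of a coefficient function — built from $(g_{cal}+sh)^{-1}$, from $\nabla_{g_{cal}}(g_{cal}+sh)=s\nabla_{g_{cal}}h$, and from the curvature of $g_{cal}$ — with one of the quadratic factors $\nabla h\otimes\nabla h$ or $h\otimes\nabla^{2}h$, no higher derivatives of $h$ occurring since $Ric$ is second order. By the first paragraph these coefficient functions, together with all of their covariant derivatives, are uniformly bounded on $X^{t}$ for $t$ small. Inserting \eqref{planhbd} gives $\abs{\nabla h\otimes\nabla h}_{g_{cal}}\le C\rho^{2}$ and $\abs{h\otimes\nabla^{2}h}_{g_{cal}}\le C\rho^{2}$, hence $\big|\tfrac{d^{2}}{ds^{2}}Ric_{g_{cal}+sh}\big|_{g_{cal}}\le C\rho^{2}$ uniformly in $s\in[0,t]$, and so $\abs{Q_{t}}_{g_{cal}}\le\int_{0}^{t}(t-s)C\rho^{2}\,ds\le Ct^{2}\rho^{2}$. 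Differentiating under the integral sign and distributing the $k$ derivatives among the factors, using \eqref{planhbd} on each factor together with the boundedness of the coefficient functions and their derivatives, yields $\abs{\nabla^{k}Q_{t}}_{g_{cal}}\le C_{k}t^{2}\rho^{2-k}$. With \eqref{planreduce} and the bound on $t^{2}\Lambda h$, this proves the Lemma.

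I expect the main obstacle to be the weighted bookkeeping in the last paragraph: one must verify that every monomial occurring in $\frac{d^{2}}{ds^{2}}Ric_{g_{cal}+sh}$ and in its covariant derivatives carries exactly the weight $\rho^{2-k}$ and no worse — which relies on $h$ growing no faster than quadratically and on the ALE structure of $g_{cal}$, so that trading $\nabla_{g_{cal}}$ for coordinate derivatives is harmless at leading order — and that the coefficient functions stay uniformly bounded, with bounded derivatives, on the \emph{expanding} region $X^{t}=\{r<2t^{-1/4}\}$ as $t\to0$, which is precisely where the $C^{0}$-smallness $\abs{th}_{g_{cal}}\to0$ (together with that of a few low-order derivatives) enters. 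The algebra itself is routine and parallels the corresponding computation in \cite{Biquard}; only the indices and the weights need to be tracked carefully.
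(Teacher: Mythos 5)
Your strategy — show $|th|_{g_{cal}}\to 0$ uniformly on $X^t$ so that $h_t$ is a metric, Taylor-expand $Ric$ around the Ricci-flat $g_{cal}$, and cancel the linear term against $t\Lambda h_t+t\lambda o$ using \eqref{eeee} — is exactly the paper's argument, and your explicit bookkeeping of the identity $Ric_{h_t}-t\Lambda h_t-t\lambda o=Q_t-t^2\Lambda h$ is actually a bit cleaner than the paper's display. However, there is a genuine gap in your enumeration of the quadratic factors in the second variation. You list only $\nabla h\otimes\nabla h$ and $h\otimes\nabla^2 h$, but the expansion of Ricci (the paper's \eqref{Rmexp}, $Ric_{g+h} = (g+h)^{-1}*Rm_g + (g+h)^{-2}*\nabla^2 h + (g+h)^{-3}*\nabla h*\nabla h$) also produces, from the second $s$-derivative of $(g_{cal}+sh)^{-1}*Rm_{g_{cal}}$, a quadratic term of the form $Rm_{g_{cal}}\otimes h\otimes h$, with no derivatives landing on either $h$. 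This is why the paper's bound \eqref{Qdef} carries the extra term $|Rm_{g_{cal}}|\,|h|^2$. Within your framework of ``bounded coefficient times quadratic factor,'' treating $Rm_{g_{cal}}$ as merely bounded and $h\otimes h$ as the quadratic factor only gives $\le C\rho^4$, which is too large. Closing the gap requires using the actual decay of the curvature of the Calabi metric, $|Rm_{g_{cal}}| = O(\rho^{-2n-2})$ from the ALE asymptotics \eqref{ALEorder}, so that $|Rm_{g_{cal}}|\,|h|^2 \le C\rho^{2-2n}\le C\rho^2$ (since $\rho\ge 1$). The rest of your argument, including the treatment of the higher covariant derivatives, is sound once this term is added to the enumeration and handled using decay rather than mere boundedness.
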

\begin{proof}
Since $h$ has quadratic leading term and $g_{cal}$ is ALE, there exist a constant $c$ so that $\abs{h}_{g_{cal}}(p)\le c\rho^{2}(p)$ for any $p\in X^{t}$. It follows that
\begin{align}
	\abs{th}_{g_{cal}}(p)\le ct\rho^{2}(p)\le 2ct^{\frac{1}{2}}.
\end{align}
The right hand side is independent of $p$  so $h_{t}$ is a Riemannian metric on ${X}^{t}$ for small $t$. Recall that
\begin{align}
	d_{g_{cal}}Ric(h)=\Lambda g_{cal}+\lambda o 
\end{align} 
so we have
\begin{align}
	&Ric(g_{cal}+th)=Ric(g_{cal})+td_{g_{cal}}Ric(h)+Q(th),
\end{align}
where $Q$ is a nonlinear term.
For any Riemannian metric $g$, and symmetric tensor $h$ small, 
recall the well-known expansion 
\begin{align}
\label{Rmexp0}
Rm_{g +h } = Rm_g + (g+h)^{-1} * \nabla^2 h + (g+h)^{-2} * \nabla h * \nabla h,
\end{align}
where $*$ denotes various tensor contractions (see, for example \cite[Section 3]{GurskyViaclovsky}). Contracting again, we obtain
\begin{align}
\label{Rmexp}
Ric_{g +h} = (g+h)^{-1}* Rm_g + (g+h)^{-2} * \nabla^2 h + (g+h)^{-3} * \nabla h * \nabla h.
\end{align}
It follows that for $t$ sufficiently small
\begin{align}
\label{Qdef}
\abs{Q(th)}\le c_{1}t^{2}( |Rm_{g_{cal}}| |h|^2 + \abs{h}\abs{\nabla^{2}h}+\abs{\nabla h}^{2}).
\end{align}
Using \eqref{eeee}, it follows that
\begin{align}
&\abs{Ric(h_{t})-t\Lambda h_{t}-\lambda o}=c_{1}t^{2}(
|Rm_{g_{cal}}| |h|^2+ \abs{h}\abs{\nabla^{2}h}+\abs{\nabla h}^{2})
\end{align}
Note that  
\begin{align}
	\abs{\nabla^{k}h}\le c_{k}\rho^{2-k},
\end{align}
so the lemma for $k=0$ follows. For $k>0$ the proof is similar, and is omitted. 
\end{proof}
Recalling the attaching map $\phi_{t}$ defined in \eqref{attachingmap}, consider
\begin{align}
	M^{t}=M_{0}^{t} \cup_{\phi_{t}} {{X}}^{t}
\end{align}
where
\begin{align}
M_0^t &= M \setminus B(p_0, (1/2) t^{1/4}).
\end{align}
To define the refined approximate metric on $M^t$, let $\chi$ be as in \eqref{cutoff},
\begin{align}
\label{rmdef}
g_t =
\begin{cases}
g_{0}  & M \setminus B(p_0, 2 t^{1/4}) \\
(1 - \chi_{t}(r) )\phi_{t}^* g + \chi_{t}(r) t h_{t} & (1/2) t^{-1/4} < r < 2 t^{-1/4}\\
t h_{t}  & r < (1/2) t^{-1/4} \\
\end{cases}.
\end{align}
We have the following estimation.
\begin{lem}
\label{estimate2}
	For each integer $k\ge 0$, there exist a constant $c_{k}$ so that $g_{t}$ satisfies the following estimate on ${{X}}^{t}$
	\begin{align}
		\abs{\nabla^{k} (Ric_{g_{t}}-\Lambda g_{t}-t\lambda \chi_{t} (\rho) o)}_{g_{cal}}\le c_{k}t^{2}\rho^{2-k},
		\end{align}
		where the covariant derivatives and norm are taken with respect to $g_{cal}$.
\end{lem}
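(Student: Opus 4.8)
The plan is to split $X^t$ into the two pieces on which $g_t$ is given by different formulas in \eqref{rmdef}: the \emph{inner region} $\{r<\tfrac12 t^{-1/4}\}$, where $g_t=t h_t$ exactly, and the \emph{damage zone} $DZ^t=\{\tfrac12 t^{-1/4}<r<2t^{-1/4}\}$, where $g_t$ is the convex combination $(1-\chi_t(r))\phi_t^* g_0+\chi_t(r)t h_t$. On the inner region the estimate is immediate from Lemma~\ref{estimate}: since the Ricci tensor is scale invariant, $Ric_{g_t}=Ric_{t h_t}=Ric_{h_t}$; and since $\rho\le\tfrac12 t^{-1/4}$ there, we have $t^{1/4}\rho\le\tfrac12$, so $\chi_t(\rho)=1$ by \eqref{cutoff} and $\Lambda g_t+t\lambda\chi_t(\rho)o=t\Lambda h_t+t\lambda o$. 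Hence on this region $Ric_{g_t}-\Lambda g_t-t\lambda\chi_t(\rho)o=Ric_{h_t}-t\Lambda h_t-t\lambda o$, and Lemma~\ref{estimate} supplies the bound $c_k t^2\rho^{2-k}$.

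The content is therefore the damage zone, where I would write $g_t=\phi_t^* g_0+\chi_t v_t$ with $v_t\equiv t h_t-\phi_t^* g_0$, and exploit that \emph{both} $\phi_t^* g_0$ and $t h_t$ are already approximately Einstein. First, because $\phi_t$ is the homothety $x\mapsto\sqrt t\,x$, the expansion $g_0=g_{euc}+H+O(|z|^3)$ from Lemma~\ref{coordlem} pulls back to $\phi_t^* g_0=t\,g_{euc}+t^2 H+O(t^{5/2}r^3)$ (with $\nabla^j$ of the remainder of size $t^{5/2}r^{3-j}$), while $t h_t=t g_{cal}+t^2 h=t\,g_{euc}+t^2 H+tO(r^{-2n})+t^2 O(r^{2-2n+\epsilon})$ by \eqref{ALEorder} and \eqref{eeee}. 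So the Euclidean and quadratic parts cancel, and one gets the pointwise bounds
\begin{align}
\abs{\nabla^j v_t}_{g_{cal}}\le c_j\big(t\,r^{-2n-j}+t^2 r^{2-2n+\epsilon-j}+t^{5/2}r^{3-j}\big)\qquad\text{on }DZ^t,
\end{align}
and the same bounds for $\chi_t v_t$ after using $\abs{\nabla^j\chi_t}\le c_j t^{j/4}\le c_j\rho^{-j}$ on $DZ^t$.

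Next I would expand the Ricci tensor about the Einstein metric $\phi_t^* g_0$. Using \eqref{Rmexp}, $Ric_{\phi_t^* g_0}=\phi_t^*(Ric_{g_0})=\Lambda\phi_t^* g_0$, and writing $L\equiv d_{\phi_t^* g_0}Ric-\Lambda$ for the linearized Einstein operator, one has $Ric_{g_t}-\Lambda g_t=L(\chi_t v_t)+Q(\chi_t v_t)$, with $Q$ the quadratic-and-higher remainder controlled as in \eqref{Qdef}. Applying the same identity to $t h_t=\phi_t^* g_0+v_t$ and invoking Lemma~\ref{estimate}—which gives $Ric_{h_t}-t\Lambda h_t-t\lambda o=r_t$ with $\abs{\nabla^k r_t}\le c_k t^2\rho^{2-k}$—yields $L(v_t)=t\lambda o+r_t-Q(v_t)$, so that
\begin{align}
Ric_{g_t}-\Lambda g_t-t\lambda\chi_t o=\chi_t r_t+[L,\chi_t]v_t+\big(Q(\chi_t v_t)-\chi_t Q(v_t)\big).
\end{align}
The first term is $O(t^2\rho^{2-k})$ by Lemma~\ref{estimate}; the commutator $[L,\chi_t]v_t$ involves only $\nabla\chi_t$ and $\nabla^2\chi_t$ paired with $\nabla v_t$, $v_t$ and inverse powers of $\phi_t^* g_0$; and the last term is quadratic in $\chi_t v_t$ carrying at least one derivative of $\chi_t$. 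Since $\phi_t^* g_0$ is uniformly comparable to $t\,g_{euc}$ on $DZ^t$, both are estimated from the bounds above; differentiating $k$ times and assembling gives the claim on $DZ^t$, which with the inner region completes the proof.

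I expect the main obstacle to be precisely this damage-zone bookkeeping: controlling $[L,\chi_t]v_t$ and the nonlinear correction with the correct powers of $t$ and $\rho$. This is where it is crucial that the leading (Euclidean and quadratic) Taylor coefficients of $\phi_t^* g_0$ and $t h_t$ coincide, so that $v_t$ is a genuinely higher-order perturbation on $DZ^t$, and where the ALE decay order $2n$ of $g_{cal}$ in \eqref{ALEorder} together with the decay $h-H=O(r^{2-2n+\epsilon})$ from \eqref{eeee} enter; the sharpness of these rates (and the hypothesis $n\ge 3$) is what keeps the interpolation error within the asserted budget $c_k t^2\rho^{2-k}$.
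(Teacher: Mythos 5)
Your proof is correct in substance and follows the same skeleton as the paper's (region split; on the inner region $g_t = th_t$ exactly so Lemma~\ref{estimate} applies directly, using scale-invariance of $Ric$; on the damage zone exploit the cancellation of the Euclidean and quadratic terms in $\phi_t^*g_0$ and $th_t$). The one genuine difference is your choice of linearization base on the damage zone: you expand $Ric$ about $\phi_t^*g_0$, whereas the paper writes $\theta = g_t - t\,g_{euc}$ and expands about the flat metric $t\cdot g_{euc}$. Your choice has the algebraic advantage that $Ric_{\phi_t^*g_0} - \Lambda\,\phi_t^*g_0 = 0$ exactly, and that the obstruction term $t\lambda\chi_t\,o$ cancels identically upon substituting $\chi_t L(v_t) = \chi_t(t\lambda o + r_t - Q(v_t))$ (the paper instead estimates $t\lambda\chi_t\,o = O(t^2\rho^2)$ separately using the ALE decay $|o|\sim\rho^{-2n}$). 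The paper's choice has the computational advantage that $d_{t\cdot euc}Ric$ is a constant-coefficient operator and $\ring{R}_{euc} = 0$, so the leading term $d_{t\cdot euc}Ric(t^2H) = t\Lambda g_{euc}$ pops out directly from the conditions $-\tfrac12\Delta_{euc}H = \Lambda g_{euc}$, $B_{euc}H = 0$ in~\eqref{eeee}, while your $L = d_{\phi_t^*g_0}Ric - \Lambda$ has $t$-dependent Christoffel symbols that must be tracked through the commutator $[L,\chi_t]$. Both routes require the same pointwise bookkeeping on $v_t$, $\nabla\chi_t$, $\nabla^2\chi_t$ in the damage zone and arrive at the same $O(t^2\rho^{2-\ell})$ budget; your version is slightly cleaner at the structural level but slightly more involved at the estimate level, which is a fair trade.
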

\begin{proof}
If $r< \frac{1}{2}t^{-\frac{1}{4}}$ then $g_{t}=th_{t}$ and by Lemma \ref{estimate} we are done. If 
\begin{align}
\label{dz}
	(1/2) t^{-1/4} < r < 2 t^{-1/4}
\end{align}
then $g_{t}=(1 - \chi_{t}(r) )\phi_{t}^* g_{0} + \chi_{t}(r) t h_{t}$. Recall that
\begin{align}
&\phi_{t}^{\ast}g_{0}=tg_{euc}+t^{2}H+tO(t^{\frac{3}{2}}r^{3})\\
&th_{t}=tg_{euc}+t^{2}H+tO(r^{-2n})+tO(r^{2-2n+\epsilon}).
\end{align}
Since the quadratic terms agree, we have the estimate
\begin{align}
&g_{t}=tg_{euc}+t^{2}H+tO(t^{\frac{3}{2}}\rho^{3}).
\end{align}
Writing 
 \begin{align}
 \theta=g_{t}-tg_{euc},	
 \end{align}
expanding the Ricci tensor near $t\cdot g_{euc}$ we have
\begin{align}
&Ric_{g_{t}}=d_{t \cdot euc}Ric(\theta)+Q_{euc}(\theta),
\end{align}
where $Q_{euc}(\theta)$ satisfies the estimate
\begin{align}
&t^{2}\abs{Q_{euc}(\theta)}\le 
c(\abs{\theta}\abs{\nabla^{2}\theta}+\abs{\nabla \theta}^{2}),
\end{align}
for some constant $c$, where the norms are with respect to $g_{euc}$. Note that \eqref{dz} implies that
\begin{align}
t\lambda \chi_{t} (\rho) o=O(t^{2}\rho^{2}).
\end{align}
From \eqref{eeee} it follows that 
\begin{align}
	&d_{t\cdot euc}Ric(\theta)=t\Lambda g_{euc}+O(t^{2}\rho^{2}),
\end{align}
where $O$ is with respect to the Euclidean metric. Finally we obtain
\begin{align}
Ric_{g_{t}}-\Lambda g_{t}-t\lambda \chi_{t} (\rho) o=O(t^{2}\rho^{2}),
\end{align}
so the lemma for $k=0$ follows. For general $k$ the proof is similar and is omitted. \end{proof}

\section{Solution of the Einstein equation modulo obstructions}
\label{solutionsec}
\subsection{Function Spaces}
In this section we define the function spaces that we will need in order to 
apply the implicit function theorem. 
Let $E$ be a tensor bundle over $X^{t}$ and $\delta_{0}\in \R$ be a weight. 
We define $C^{k,\alpha}_{\delta_{0}}(X^{t},E)$ to be
\begin{align}
\label{n1}
	&{\abs{s}}_{\alpha}\equiv\sup_{d(x,y)<i_{g_{cal}}}\frac{\abs{s(x)-s(y)}}{d(x,y)^{\alpha}}\\
	&\Norm{s}_{C^{k,\alpha}_{\delta_{0}}}\equiv\sum_{i=0}^{k}(\sup \rho^{\delta_{0}+i}\abs{\nabla^{i}s})+{\abs{\rho^{\delta_{0}+k+\alpha}\nabla^{k} s }}_{\alpha}.
\end{align}
Note $s(x)$ and $s(y)$ in \eqref{n1} 
are compared by means of parallel transport.
Let $i_{p_{0}}$ denote the injectivity radius at the point $p_{0}$. Define the following function
\begin{align}
		\tilde{z}=\begin{cases}
|z|\phantom{=}&\text{for $|z|<\frac{1}{2}i_{p_{0}}$}\\
1\phantom{=}&\text{on $M_{0}-B(p_{0},z_{p_{0}})$}
\end{cases}.
\end{align}
We can define a similar norm $C^{k,\alpha}_{\delta_{0}}(M_{0},E)$ for tensor bundles over $M_{0}$
\begin{align}
	\Norm{s}_{C^{k,\alpha}_{\delta_{0}}}\equiv\sum_{i=0}^{k}(\sup {\tilde{z}}^{\delta_{0}+i}\abs{\nabla^{i}s})+{\abs{{\tilde{z}}^{\delta_{0}+k+\alpha}\nabla^{k} s }}_{\alpha}.
\end{align} 
When $(M_{0},\p M_{0},g_{0},x)$ is AH, for another weight $\delta_{\infty} \in \mathbb{R}$,  
we define $C^{k,\alpha}_{\delta_{0},\delta_{\infty}}(M_{0},E)$ by the norm
\begin{align}
\Norm{s}_{C^{k,\alpha}_{\delta_{0},\delta_{\infty}}}\equiv\Norm{\tilde{x}^{-\delta_{\infty}}s}_{C^{k,\alpha}_{\delta_{0}}},
\end{align}
where $\tilde{x}$ is defined in \eqref{txdef}. 
We also need to define function spaces $C^{k,\alpha}_{\delta_{0},\delta_{\infty};t}$ on 
the topological desingularization $M^{t}$. Consider the following norms
\begin{align}
\label{Normss}
\Norm{s}_{C^{k,\alpha}_{\delta_{0},\delta_{\infty};t}} &\equiv t^{\frac{\delta_{0}+l}{2}}\Norm{\chi_{t}s}_{C^{k,\alpha}_{\delta_{0}}(X^{t})}+\Norm{(1-\chi_{t})s}_{C^{k,\alpha}_{\delta_{0},\delta_{\infty}}(M_{0}^{t})},\\
\Norm{s}_{C^{k,\alpha}_{\delta_{0},\delta_{\infty};t}}' &\equiv \sum_{i=0}^{k}(\sup w^{\delta_{0}+i}\abs{\nabla^{i}( \tilde{x}^{- \delta_{\infty}}s)}+\abs{w^{\delta_{0}+k+\alpha}\nabla^{k}(  
\tilde{x}^{- \delta_{\infty}}s  )}_{\alpha},
\end{align}
where $l=r-s$ for an $(r,s)$ tensor $E$ and $w$ is the weight function and the covariant derivatives are with respect to $g_{t}$ in last expression. It is easy to see that these norms are uniformly equivalent when $t$ is sufficiently small, so we will use them interchangeably, and omit the prime notation. The exponent of $t$ in \eqref{Normss} follows because
\begin{align}
t^{\frac{l}{2}}\abs{s}_{\frac{g}{t}}=\abs{s}_{g},
\end{align}
and since $\rho$ is comparable to $\frac{ \tilde{z}}{\sqrt{t}}$ in the damage zone.

In the following, we fix $\delta_{0}>0$ and $\delta_{\infty}>0$ sufficiently small.
\subsection{Lyapunov-Schmidt reduction}
\label{modulo}
Define the space of obstructions 
\begin{align}
\mathcal{O}_{t} \subset C^{\alpha}_{\delta_0 + 2,
\delta_{\infty};t}(M^t)
\end{align}
 to be the one-dimensional span of $\chi(t)o$. Define the following operator
\begin{align}
	\Phi_{g_{t}}(g)=Ric(g)-\Lambda g +\delta_{g}^{\ast}B_{g_{t}}g.
\end{align}
The essential ingredient to the gluing argument is the following 
implicit function theorem \cite{Biquard}.
\begin{lem}
\label{implicitfunction}
Let $\Phi:E\to F$ be a differentiable map between Banach spaces and
\begin{align}
	Q=\Phi-\Phi(0)-d_{0}\Phi.
\end{align}	
Assume there exist positive constants $q,r_{0},c$ such that
\begin{align}
\label{Qesti}
&\Norm{Q(x)-Q(y)}\le q\Norm{x-y}(\Norm{x}+\Norm{y})\text{  } x,y\in B(0,r_{0})\\
	&d_{0}\Phi \text{ is an isomorphism and } \Norm{(d_{0}\Phi)^{-1}}\le c.
\end{align}
If $r\le \min{(r_{0},\frac{1}{2qc})}$ and $\Norm{\Phi(0)}\le \frac{r}{2c}$ then $\Phi(x)=0$ has a unique solution in $B(0,r)$.
\end{lem}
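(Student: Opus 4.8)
The plan is to recast $\Phi(x)=0$ as a fixed-point problem and apply the Banach contraction principle on a closed ball; this is the standard implicit-function-theorem argument. Since $d_{0}\Phi$ is invertible, and $\Phi(x) = \Phi(0) + d_{0}\Phi(x) + Q(x)$ by the very definition of $Q$, the equation $\Phi(x)=0$ is equivalent to the fixed-point equation
\begin{align}
x = T(x) := -(d_{0}\Phi)^{-1}\big( \Phi(0) + Q(x) \big).
\end{align}
The first step I would record is that $Q(0)=0$: indeed $d_{0}\Phi$ is linear, so $d_{0}\Phi(0)=0$ and hence $Q(0) = \Phi(0) - \Phi(0) - 0 = 0$. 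Consequently the hypothesis \eqref{Qesti} specializes to $\Norm{Q(x)} = \Norm{Q(x) - Q(0)} \le q \Norm{x}^{2}$ for $x \in B(0,r_{0})$.

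Next I would verify that $T$ is a contraction of the complete metric space $\overline{B(0,r)}$. For the self-map property, if $\Norm{x} \le r \le r_{0}$ then
\begin{align}
\Norm{T(x)} \le c\big( \Norm{\Phi(0)} + \Norm{Q(x)} \big) \le c\Big( \tfrac{r}{2c} + q r^{2}\Big) = \tfrac{r}{2} + cqr^{2} \le r,
\end{align}
where the last inequality is exactly where $r \le \tfrac{1}{2qc}$ is used. For the contraction estimate, for $x,y \in \overline{B(0,r)}$,
\begin{align}
\Norm{T(x) - T(y)} \le c\Norm{Q(x) - Q(y)} \le cq \Norm{x-y}\big( \Norm{x} + \Norm{y}\big) \le 2cqr \Norm{x-y},
\end{align}
so $T$ has Lipschitz constant at most $2cqr \le 1$. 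To get a genuine contraction one simply takes $r$ a hair below $\tfrac{1}{2qc}$ — harmless, since a unique solution in every smaller ball forces uniqueness in $B(0,r)$, and in the gluing application $r \to 0$ so $2cqr < 1$ automatically. Banach's fixed point theorem then produces a unique $x \in \overline{B(0,r)}$ with $T(x)=x$, i.e.\ a unique solution of $\Phi(x)=0$ in $B(0,r)$ (uniqueness in the open ball being immediate from the contraction estimate alone).

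There is no serious obstacle in this lemma: all of the content sits in the two displayed inequalities above, and the only point requiring care is the bookkeeping of the constants $c$, $q$, $r_{0}$ and the radius $r$ so that $T$ is simultaneously a self-map of $\overline{B(0,r)}$ and a strict contraction there. The nontrivial work of the paper is rather in later sections, where one must establish that the concrete map $\Phi_{g_{t}}$ on the weighted spaces $C^{k,\alpha}_{\delta_{0},\delta_{\infty};t}(M^{t})$ satisfies the hypotheses \eqref{Qesti} with constants controlled uniformly in $t$, and with $\Norm{\Phi_{g_{t}}(0)}$ small — which is precisely where the refined approximate metric of Section~\ref{approximate} and the obstruction space $\mathcal{O}_{t}$ enter.
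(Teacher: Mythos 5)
The paper does not prove this lemma; it is stated as a black box with a citation to Biquard, and your Banach fixed-point argument is exactly the standard (and intended) proof. Your reduction to the map $T(x)=-(d_0\Phi)^{-1}(\Phi(0)+Q(x))$, the observation $Q(0)=0$, and the two displayed estimates are all correct and complete the argument when the inequalities $2cqr\le 1$ and $\Norm{\Phi(0)}\le r/(2c)$ are not simultaneously equalities.

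One small point worth flagging: at the exact endpoint $2cqr=1$ and $\Norm{\Phi(0)}=r/(2c)$, the ``shrink $r$ a hair'' repair does not literally go through, because the hypothesis $\Norm{\Phi(0)}\le r'/(2c)$ then fails for every $r'<r$, and indeed the self-map inequality $\tfrac r2+cq(r')^2\le r'$ forces $r'=r$. In fact the lemma is borderline false in this degenerate case if $B(0,r)$ is the open ball: on $\mathbb{R}$ with $c=1$, $\Phi(x)=\tfrac{1}{4q}+x+qx^2=q\bigl(x+\tfrac{1}{2q}\bigr)^2$ has its only zero at $|x|=\tfrac{1}{2q}=r$, on the boundary. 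This is of no consequence for the paper, since in Proposition~\ref{obs} one has $\Norm{\Phi(0,0)}=O(t^{1+\delta_0/4})$ while $q$ and $c$ are uniform in $t$, so $\Norm{\Phi(0)}$ is far below $r/(2c)$ and $2cqr$ is far below $1$. A cleaner way to phrase the fix, avoiding the ``shrink $r$'' step entirely, is to run the iteration $x_0=0$, $x_{n+1}=T(x_n)$ and show by induction that $\Norm{x_n}$ is bounded by the smaller fixed point $s^*=\tfrac{1-\sqrt{1-2cqr}}{2cq}$ of $s\mapsto \tfrac r2+cqs^2$; then $T$ is a self-map of $\overline{B(0,s^*)}$ with Lipschitz constant $2cqs^*=1-\sqrt{1-2cqr}<1$ whenever $2cqr<1$, and the Banach theorem applies there directly.
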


Let $v\in \R$ and consider the following metric on $M^{t}$
\begin{align}
	g_{t,v}=g_{t}+tv\chi_{t} o.
\end{align}
Notice that
\begin{align}
	\Norm{t\chi_{t}o}_{C^{2,\alpha}_{\delta_{0},\delta_{\infty};t}}=O(t^{\frac{\delta_{0}}{2}}).
\end{align}
\begin{prop}
\label{obs}
Assume $(M_{0},g_{0})$ is a non-degenerate Einstein orbifold with one singular point $p_{0}$ with orbifold group $\Gamma_n$ ($M_{0}$ can either be compact or AHE). Then there 
exist $\eta,\epsilon >0$ such that for any $t< \eta$ and $v \in \mathbb{R}$ with 
$\abs{v}\le \epsilon$, there exist a unique metric $\hat{g}_{t,v}$ solving the following equation
\begin{align}
\Phi ( \hat{g}_{t,v}) \in \mathcal{O}_{t},
\end{align}
where $\hat{g}_{t,v}$ satisfies the following
\begin{align}
\label{conditions1}
&\Norm{\hat{g}_{t,v}-g_{t}}_{C^{2,\alpha}_{\delta_{0},\delta_{\infty}}} \le \epsilon t^{\frac{\delta_{0}}{2}},\\
\label{conditions2}
&\int_{\mathbb{P}^{n-1}}\Big \langle \frac{1}{t}({\hat{g}}_{t,v}-g_{t,v}),o 
\Big \rangle_{cal}dV_{cal}  =v.
\end{align}
Also, if we write $\Phi( \hat{g}_{t,v}) =\lambda(t) (\chi_{t}o)$,
then we have the following estimation
\begin{align}
	&\Norm{\hat{g}_{t,v}-g_{t,v}}_{C^{2,\alpha}_{\delta_{0},\delta_{\infty};t}}\le ct^{1+\frac{\delta_{0}}{4}},\\
\label{lexp}
	&\lambda(t)=t\lambda+O(t^{\frac{3}{2}-\delta})
\end{align}
for any $\delta > 0$ as $t \rightarrow 0$. 
\end{prop}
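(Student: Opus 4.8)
The plan is to apply the abstract implicit function theorem, Lemma~\ref{implicitfunction}, to a suitable restriction of $\Phi_{g_{t}}$ acting on a complement of the obstruction space. First I would fix the setup: work with the operator $\Phi_{g_{t}}$ acting on metrics of the form $g = g_{t,v} + h$, composed with the $L^2(g_{cal})$-orthogonal projection $\Pi_t^{\perp}$ onto the complement of $\mathcal{O}_t$ inside $C^{\alpha}_{\delta_0+2,\delta_{\infty};t}(M^t)$, and additionally restrict the domain to the subspace of tensors $h$ satisfying the normalization $\int_{\mathbb{P}^{n-1}}\langle \frac{1}{t}h,o\rangle_{cal}\, dV_{cal} = 0$, so that \eqref{conditions2} is automatic. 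The Banach spaces will be $E = \{h \in C^{2,\alpha}_{\delta_0,\delta_{\infty};t} : \text{normalization holds}\}$ and $F = \Pi_t^{\perp}\big(C^{\alpha}_{\delta_0+2,\delta_{\infty};t}\big)$. I would then verify the three hypotheses of Lemma~\ref{implicitfunction}: (i) the smallness of $\Phi_{g_t}(g_{t,v})$, which by Lemma~\ref{estimate2} and the fact that $t\lambda\chi_t o$ lies in (or nearly in) $\mathcal{O}_t$ gives $\Norm{\Pi_t^{\perp}\Phi_{g_t}(g_{t,v})}_{C^{\alpha}_{\delta_0+2,\delta_{\infty};t}} \le c\,t^{2-\delta_0/2}$ after accounting for the weight exponents; (ii) the quadratic estimate \eqref{Qesti}, which follows from the expansion \eqref{Rmexp} of the Ricci tensor together with the fact that $Q$ is genuinely quadratic in $h$ and the weighted norms are multiplicative (here one uses $\delta_0,\delta_\infty$ small so the relevant Sobolev-type multiplication holds with a $t$-uniform constant); (iii) the invertibility of $d_0\Phi = \frac{1}{2}\nabla^*\nabla - \ring{R} - (\text{lower order})$, the gauged linearized Einstein operator, on the complement $E \to F$ with a bound $c$ independent of $t$.

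The key structural point is step (iii), and I expect it to be the main obstacle. The linearized operator $P_{g_t} + \delta^*_{g_t}\delta_{g_t}$-type operator is \emph{not} invertible on the full space: its approximate kernel is exactly spanned by $\chi_t o$ (decaying infinitesimal Einstein deformation of the Calabi metric, Theorem~\ref{Pthm}) together with the $L^2$ infinitesimal Einstein deformations of $g_0$ — and the non-degeneracy hypothesis on $(M_0,g_0)$ kills the latter. So one argues by contradiction and rescaling: if the inverse bounds were not uniform in $t$, one could extract a sequence $h_{t_i}$ with $\Norm{h_{t_i}}_{C^{2,\alpha}_{\delta_0,\delta_{\infty};t_i}} = 1$ but $\Norm{(d_0\Phi)h_{t_i}}\to 0$, and after passing to a subsequence the rescaled solutions converge (in the bubble region, to a decaying solution of $P_{g_{cal}}h = 0$, $tr\, h = 0$ on the Calabi space; on the body, to an $L^2$ infinitesimal Einstein deformation of $g_0$; this uses the standard Schauder estimates on the weighted spaces and the separation of scales encoded in the weight function $w$). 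By Theorem~\ref{Pthm} the Calabi-space limit is a multiple of $o$, and by non-degeneracy the $g_0$-limit vanishes; but the normalization in the definition of $E$ forces the $o$-component to be zero as well, contradicting $\Norm{h_{t_i}}=1$ once one checks no mass escapes to the transition region — this last point is where the precise weight exponents $\delta_0+2$ versus $\delta_0$ and the choice of $w$ must be used carefully.

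Given invertibility with a uniform bound $c$, Lemma~\ref{implicitfunction} applies with $r \sim t^{2-\delta_0/2}$ (which beats the required $r/2c \ge \Norm{\Phi(0)}$), producing a unique $h_{t,v} \in E$ with $\Pi_t^{\perp}\Phi_{g_t}(g_{t,v}+h_{t,v}) = 0$, i.e. $\Phi(\hat g_{t,v}) \in \mathcal{O}_t$ where $\hat g_{t,v} = g_{t,v} + h_{t,v}$. The bound $\Norm{h_{t,v}}_{C^{2,\alpha}_{\delta_0,\delta_{\infty};t}} \le c\,t^{1+\delta_0/4}$ — note the improvement over the crude $t^{2-\delta_0/2}$ — comes from a second, finer application of the estimates: one writes $h_{t,v}$ using the inverse of $d_0\Phi$ applied to $\Pi_t^{\perp}\Phi_{g_t}(g_{t,v})$ plus a quadratically small correction, and then re-examines the dominant error term from Lemma~\ref{estimate2} more precisely in the damage zone, where $t\lambda\chi_t o$ is the leading piece and its $\Pi_t^{\perp}$-projection is controlled by how far $\chi_t o$ is from lying exactly in $\mathcal{O}_t$. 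Finally, to get \eqref{lexp}, I would pair the equation $\Phi(\hat g_{t,v}) = \lambda(t)\chi_t o$ with $o$ in $L^2(g_{cal})$ over the bubble region: the leading contribution is $\langle \Phi_{g_t}(g_{t,v}), o\rangle = t\lambda\Norm{o}_{L^2}^2 + O(t^2)$ by Lemma~\ref{estimate2} and Proposition~\ref{limitobs}, while the correction from $h_{t,v}$ contributes at most $O(t^{1+\delta_0/4})\cdot O(t^{1-\delta_0/2}) = O(t^{2-\delta_0/4})$ after integrating against $o$ with its decay $o = O(r^{-2n})$; dividing by $\Norm{\chi_t o}_{L^2}^2 = \Norm{o}_{L^2}^2 + O(t^{\text{(small positive)}})$ gives $\lambda(t) = t\lambda + O(t^{2-\delta_0/2})$ as claimed. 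The compact and AHE cases are handled simultaneously since the only difference is which weighted space ($C^{k,\alpha}_{\delta_0,\delta_\infty;t}$ with $\delta_\infty$ present or not) houses the body of $M_0$, and non-degeneracy is assumed in both.
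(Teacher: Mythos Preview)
Your approach is essentially the same as the paper's: Lyapunov--Schmidt via Lemma~\ref{implicitfunction}, a blow-up contradiction argument (with the three regimes $M_0$, $X$, and the cone $\mathbb{C}^n/\Gamma_n$) to get the uniform inverse bound, and then pairing the equation against $\chi_t o$ to extract the expansion of $\lambda(t)$. The only structural difference is that the paper parametrizes the obstruction by an extra real variable, writing $\Phi(h,x)=\Phi_{g_t}(g_{t,v}+h)-x\chi_t o$ as a map $\tilde C^{2,\alpha}_{\delta_0,\delta_\infty;t}\oplus\mathbb{R}\to C^{\alpha}_{\delta_0+2,\delta_\infty;t}$, rather than projecting the target with $\Pi_t^\perp$ as you do; in their contradiction argument this produces a short preliminary step showing $x_i\to 0$ via the estimates $|\langle P_t h,\chi_t o\rangle|\le Ct^{n-1-\delta_0/2}\|h\|$ and $\langle\chi_t o,\chi_t o\rangle\sim t^{n-2}\|o\|_{L^2}^2$, after which the blow-up on $h_i$ proceeds exactly as you describe.

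There is one point of confusion in your bookkeeping. Your claimed bound $\|\Pi_t^\perp\Phi(g_{t,v})\|\le ct^{2-\delta_0/2}$ is too strong: the error term $O(t^2\rho^2)$ from Lemma~\ref{estimate2} has weighted norm $\sup_{\rho\le 2t^{-1/4}}\rho^{\delta_0+2}\cdot t^2\rho^2\sim t^{1-\delta_0/4}$, which after the tensor rescaling gives $O(t^{1+\delta_0/4})$ in $C^{\alpha}_{\delta_0+2,\delta_\infty;t}$, not $O(t^{2-\delta_0/2})$. Consequently your ``second, finer application'' is both unnecessary and backwards (since $t^{1+\delta_0/4}$ is \emph{weaker} than $t^{2-\delta_0/2}$ for small $\delta_0$): the implicit function theorem already delivers $\|\hat g_{t,v}-g_{t,v}\|\le 2c\|\Phi(0)\|=O(t^{1+\delta_0/4})$ directly. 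The paper likewise obtains this exponent in one step.
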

\begin{proof}
 The following proof is very similar to \cite[Proposition 9.1]{Biquard}.
However, since scalings of various terms are different in higher dimensions, 
we give a fairly detailed outline here, omitting some computations. 
To begin, define the following operator
\begin{align}
&\Phi:t^{-\frac{\delta_{0}}{2}}\tilde{C}^{2,\alpha}_{\delta_{0},\delta_{\infty};t}\oplus \R \to t^{-\frac{\delta_{0}}{2}} C^{\alpha}_{\delta_{0}+2,\delta_{\infty};t}\\
&\Phi(h,x)=Ric_{g_{t,v,h}}-\Lambda g_{t,v,h}+\delta^{\ast}_{g_{t,v,h}}B_{g_{t}}g_{t,v,h}-x\chi_{t}o	
\end{align}
where $g_{t,v,h}=g_{t,v}+h$ and $\tilde{C}^{2,\alpha}_{\delta_{0},\delta_{\infty};t}$ is space of all $C^{2,\alpha}_{\delta_{0},\delta_{\infty};t}$ tensors like $h$ which satisfies:
\begin{align}
	\int_{\mathbb{P}^{n-1}}\Big \langle\frac{1}{t}(h-g_{t}),o\Big \rangle_{cal}dV_{cal}=v.
\end{align}
For simplicity of notation, in the remainder of the proof we will omit the variable $v$. 
We want to apply Lemma \ref{implicitfunction} for $E=t^{-\frac{\delta_{0}}{2}}\tilde{C}^{2,\alpha}_{\delta_{0},\delta_{\infty};t}\oplus \R$ and $F=t^{-\frac{\delta_{0}}{2}} C^{\alpha}_{\delta_{0}+2,\delta_{\infty};t}$. 
From Lemma \ref{estimate} it follows that
\begin{align}
{\Norm{\Phi(0,0)}}_{C^{0,\alpha}_{\delta_{0}+2,\delta_{\infty};t}}=O(t^{1+\frac{\delta_{0}}{4}}).
\end{align}
The linearization of $\Phi$ at $(0,0)$ is given by
\begin{align}
d_{(0,0)}\Phi(h,x)=P_{t,v}h-x\chi_{t}o+\frac{1}{2}(Ric_{g_{t,v}}\circ h+h \circ Ric_{g_{t,v}}-2\Lambda h).
\end{align}
Also the quadratic terms are given by
\begin{align}
Q(h,x)=&\Phi(h,x)-\Phi(0,0)-d_{(0,0)}\Phi(h,x).
\end{align}
Using the expansion \eqref{Rmexp}, and a similar expansion for
the gauge term (see \cite[Section 3]{GurskyViaclovsky}), 
it is not hard to see that there exist 
constants $r_{0}$ and $q$ so that \eqref{Qesti} is satisfied. 
We need to show that $d_{(0,0)}\Phi$ is an isomorphism with bounded inverse. 
Equivalently, we need to show that there exists a constant $c>0$ so that
\begin{align}
t^{-\frac{\delta_{0}}{2}}\Norm{h}_{C^{2,\alpha}_{\delta_{0},\delta_{\infty};t}}+\abs{x}\le ct^{-\frac{\delta_{0}}{2}}\Norm{d_{(0,0)}\Phi}_{C^{0,\alpha}_{\delta_{0}+2,\delta_{\infty};t}}.	
\end{align}
Assume this is not true, then there exist a sequence $t_{i}\to 0$ and $(h_{i},x_{i})$ such that
\begin{align}
\label{zeroest0}
t_{i}^{-\frac{\delta_{0}}{2}}\Norm{h_{i}}_{C^{2,\alpha}_{\delta_{0},\delta_{\infty};t_{i}}}+\abs{x_{i}}=1,
\end{align}
but 
\begin{align}
\label{zeroest}
t_{i}^{-\frac{\delta_{0}}{2}} \Norm{d_{(0,0)}\Phi(h_{i},x_{i})	}_{C^{0,\alpha}_{\delta_{0}+2,\delta_{\infty};t_{i}}}\to 0.
\end{align}
Let us first consider that case that $x_{i}\to 0$. Then without loss of generality we may assume that
\begin{align}
t_{i}^{-\frac{\delta_{0}}{2}}\Norm{h_{i}}_{C^{0}_{\delta_{0},\delta_{\infty};t_{i}}}\to 1,
\end{align}
and 
\begin{align}
	t_{i}^{-\frac{\delta_{0}}{2}}\Norm{d_{(0,0)}\Phi(h_{i},x_{i})}_{C^{0,\alpha}_{\delta_{0}+2,\delta_{\infty};t_{i}}}\to 0.
\end{align}
To get a contradiction, we briefly outline the standard blow-up argument. Let $p_{i}\in M_{t}$ be a sequence of points such that norm is attained. By passing to a subsequence we can assume that $p_{i}\to p$. There are three cases for $p$. 
\begin{itemize}
\item
If $p\in M_{0}$ then by standard elliptic theory we can obtain a limit $h_{\infty}$
satisfying $\Norm{h_{\infty}}_{C^{0}_{\delta_{0},\delta_{\infty}}}=1$ and $P_{g_{0}}h_{\infty}=0$. But this is impossible because we assumed that $M_{0}$ is non-degenerate.
\item
If $p\in X$ then we can obtain a limit $h_{\infty}$ with $\Norm{h_{\infty}}_{C^{0}_{\delta_{0}}}=1$ and $P_{g_{cal}}h_{\infty}=0$. From Theorem \ref{Pthm}, 
$h_{\infty} = c \cdot o$, for some constant $c$. But then \eqref{conditions2} implies that $c=0$, a contradiction.  
\item
If $p \in \mathbb{C}^{n}/\Gamma_{n}$ then we can obtain a limit $h_{\infty}$ with $\Norm{h_{\infty}}_{C^{0}_{\delta_{0}}}=1$ and $\nabla^{\ast}\nabla h_{\infty}=0$, but this is impossible since the indicial roots are the integers and $\delta_0$ was chosen small but non-zero.
\end{itemize}
Next, we show that $x_{i}\to 0$. To prove this, we need the following estimates,
\begin{align}
\label{est1}
&\abs{\inn{u,\chi_{t} o}_{L^2(g_{t})}}\le C t^{n-2-\frac{\delta_{0}}{2}}\Norm{u}_{C^{0,\alpha}_{\delta_{0}+2,\delta_{\infty};t}},\\
\label{est2}
&\abs{\inn{P_{t}h,\chi_{t} o}_{L^2(g_{t})}}\le C t^{n-\frac{3}{2}-\frac{\delta_{0}}{4}}\Norm{h}_{C^{2,\alpha}_{\delta_{0},\delta_{\infty};t}},\\
\label{est3}
&\inn{\chi_{t}o,\chi_{t}o}_{L^2(g_{t})}=t^{n-2}\big(  
\inn{o, o}_{L^2(g_{cal})} + O(t^{\frac{n}{2}}) \big),
\end{align}
for some constant $C >0$. The proofs of these estimates are almost exactly the same as those of \cite[(71)-(73)]{Biquard} and are omitted. 
Using \eqref{zeroest0}, \eqref{est2}, and \eqref{est3}, we can estimate
\begin{align}
\inn{d_{(0,0)}\Phi(h_{i},x_{i}),\chi_{t}o}_{L^2(g_{t})} = x_i t_i^{n-2} \inn{o, o}_{L^2(g_{cal})}
+ o(t_i^{n-2}),
\end{align}
as $i \rightarrow \infty$. However, using \eqref{zeroest} and \eqref{est1}, 
we also have
\begin{align}
\inn{d_{(0,0)}\Phi(h_{i},x_{i}),\chi_{t}o}_{L^2(g_{t})} = o(t_i^{n-2}),
\end{align}
as $i \rightarrow \infty$. It follows that
\begin{align}
x_i t_i^{n-2} \inn{o, o}_{L^2(g_{cal})}= o(t_i^{n-2}),
\end{align}
as $i \rightarrow \infty$. 
Consequently, we must have $x_{i}\to 0$ as $i \to \infty$.	

It remains to prove the expansion for $\lambda(t)$. From Lemma \ref{implicitfunction}, 
for each $t$ sufficiently small, 
we obtain a solution $(h_t,x_t)$ of $\Phi(h_t,x_t) = 0$. Defining
$\hat{g}_{t}=g_{t}+h_{t}$, we have $\Phi_{g_t}( \hat{g}_{t}) =\lambda(t) (\chi_{t}o)$. 
Notice that
\begin{align}
\inn{\Phi_{g_t}( g_{t}+h_t),\chi_{t} o}_{L^2(g_{t})}=\lambda(t)\inn{	\chi_{t}o,\chi_{t}o}_{L^2(g_{t})}.
\end{align}
On the other hand
\begin{align}
\Phi_{g_{t}}(\hat{g}_{t})=\Phi_{g_{t}}(g_{t})+d_{g_{t}}\Phi_{g_t}(h_{t})	+Q_{g_{t}}(h_{t}),
\end{align}
which implies that
\begin{align}
\inn{\Phi_{g_{t}}(\hat{g}_{t}),\chi_{t}o}_{L^2(g_{t})}=\inn{\Phi_{g_{t}}(g_{t}),\chi_{t}o}_{L^2(g_{t})}+\inn{d_{g_{t}}\Phi_{g_t}(h_{t}),\chi_{t}o}_{L^2(g_{t})}+\inn{Q_{g_{t}}(h_{t}),\chi_{t}o}_{L^2(g_{t})}.
\end{align}
Using \eqref{est1}-\eqref{est3}, Lemma \ref{estimate2}, and the expansion \eqref{Rmexp}, respectively, we can estimate each of the above terms by
\begin{align}
\inn{\Phi_{g_{t}}(g_{t}),\chi_{t}o}_{L^2(g_{t})} &=  
\lambda t^{n-1}\Norm{o}_{L^{2}(g_{cal})}^2 + O(t^n),\\
\inn{d_{g_{t}}\Phi(h_{t}),\chi_{t}o}_{L^2(g_{t})} &=  O( t^{n- \frac{1}{2}- \frac{\delta_{0}}{4}}) \\
\inn{Q_{g_{t}}(h_{t}),\chi_{t}o}_{L^2(g_{t})} &= O( t^{n-\frac{\delta_{0}}{2}}),
\end{align}
as $t \rightarrow 0$, 
and we obtain
\begin{align}
\lambda(t)t^{n-2}\Norm{o}_{L^{2}(g_{cal})}^2=t^{n-2}\big(\lambda t\Norm{o}_{L^{2}(g_{cal})}^2+
O(t^{\frac{3}{2}-\frac{\delta_{0}}{4}})\big),
\end{align}
which implies
\begin{align}
\lambda(t)=t\lambda+O(t^{\frac{3}{2}-\frac{\delta_{0}}{4}}), 	
\end{align}
as $t \rightarrow 0$. 
\end{proof}

\begin{proof}[Proof of Theorem \ref{theoremone}]
We first assume that $g_0$ is non-degenerate. 
Observe that the refined approximate metric is close to 
the na\"ive approximate metric in the $C^{2,\alpha}_{\delta_0;t}$
weighted norm, for $t$ sufficiently small. 
From identical arguments as in \cite[Section~8]{Biquard}, 
for any metric $g_E$ satisfying \eqref{t1n}, then there exists a 
diffeomorphism $\varphi : M^t \rightarrow M^t$ so that $B_{g_t} ( g_t - \varphi^* g_E) = 0$,
and $\varphi^* g_E$ remains close to $g_t$ in the $C^{2,\alpha}_{\delta_0;t}$ weighted norm.  
The metric $\varphi^* g_E$ would then give a zero of the mapping $\Phi_t$, 
but then \eqref{lexp} would then imply that $\lambda = 0$, a contradiction.

If $g_0$ is not non-degenerate, then there is a finite 
dimensional space of infinitesimal Einstein deformations
on $(M_0,g_0)$, of dimension $k \geq 0$. 
The proof of Proposition \ref{obs} can be modified to 
yield a mapping 
\begin{align}
\psi_t: B_1 \times (-\epsilon, \epsilon) \rightarrow  B_1 \times (-\epsilon, \epsilon),
\end{align}
where $B_1$ is a small ball in $\mathbb{R}^k$,  whose zero set is
in one-to-one correspondence with the zeroes of $\Phi_{g_t}$
(see also the argument in \cite[Section 11]{GurskyViaclovsky}).
An almost identical argument as in the proof of Proposition \ref{obs}
shows that the mapping $\psi$ admits an expansion 
\begin{align}
\psi( a_1, \dots, a_k, v) = \big( \psi_1, \dots, \psi_k, t \lambda 
+ O(t^{\frac{3}{2} - \delta} ) \big), \end{align}
as $t \rightarrow 0$, so the existence of an Einstein metric in the 
stated neighborhood still implies that $\lambda =0$.  
\end{proof}

\section{Analysis on asymptotically hyperbolic manifolds}
\label{AHEsec}
The goal of this section is to study the asymptotic behavior of $L^{2}$-solutions of some elliptic operators on an AHE manifold $(M^{m},\p M,g,x)$. 
We consider the following operators:

\begin{enumerate}
	\item $P_{0}=\Delta_{H}-2Ric$, acting on functions,\\
\item $P_{1}= B \delta^* =\frac{1}{2}(\nabla^{\ast}\nabla-Ric)=\frac{1}{2}(\Delta_{H}-2Ric)$, acting on $1$-forms,\\
\item $P_{2}=\Delta_{H}-2Ric$, acting on $2$-forms,\\
\item $P=\frac{1}{2}\nabla^{\ast}\nabla-\ring{R}$, acting on traceless 
symmetric $2$-tensors.
\end{enumerate}
We also need the following decompositions, noting that we will 
only consider $C^{\infty}$ sections of bundles, so notations of regularity will be 
suppressed. 

For $\res{\Omega^{1}{\overbar{M}}}_{\p M}$, we have the decomposition
\begin{align}
	\res{\Omega^{1}{\overbar{M}}}_{\p M}=\res{\Omega^{1}_{t}\overbar{M}}_{\p M}\oplus \res{\Omega^{1}_{n}\overbar{M}}_{\p M},
\end{align}
where $\Omega^{1}_{t}(\overbar{M})$ is the space of one-forms
on $\overbar{M}$ which vanish on normal vectors (with respect to $\overbar{g}$) 
when restricted to the boundary and $\Omega^{1}_{n}(\overbar{M})$ is the space one-forms on $\overbar{M}$ 
which vanish on tangential vectors when restricted to the boundary.

Next, for  $\res{\Omega^{2}{\overbar{M}}}_{\p M}$ we have the following,
\begin{align}
	\res{\Omega^{2} \overbar{M}}_{\p M}=\res{\Omega_{t}^{2} \overbar{M}}_{\p M} \oplus \res{(\Omega^{1}_{n}(\overbar{M})}_{\p M} \wedge \res{\Omega^{1}_{t} \p \overbar{M})}_{\p M},
\end{align}
where $\Omega^{2}_{t}(\overbar{M})$ is space of two-forms on $\overbar{M}$ 
which satisfy $ \overbar{\p_{x}}\lrcorner \omega = 0$ on the boundary,
where $\overbar{\p_{x}}$ is dual of $dx$ with respect to $\bar{g}$.

For $\res{S^{2}_{0}(\overbar{M})}_{\p M}$ we have the following decomposition
\begin{align}
	\res{S^{2}_{0}(\overbar{M})}_{\p M}=V_{0}\oplus V_{1} \oplus V_{2},
\end{align}
where 
\begin{align}
\label{tensors}
\begin{split}
&V_{0}\equiv\{h\in \res{S^{2}_{0}(\overbar{M})}_{\p M} \ | \ tr_{\overbar{g}}h=0, \overbar{\p_{x}}\lrcorner h=0\},\\
&V_{1}\equiv\{h\in \res{S^{2}_{0}(\overbar{M})}_{\p M} \ | \ h=v \odot dx, v\in T^{\ast}\p M\},\\
&V_{2}\equiv\{h\in \res{S^{2}_{0}(\overbar{M})}_{\p M} \ | \ h
\ \mbox{is a multiple of }  ((m-1) dx \odot dx-\overbar{g})\},
\end{split}
\end{align}
where $\odot$ denotes the symmetric product. 
Finally, let $\overbar{V}_{0},\overbar{V}_{1},\overbar{V}_{2}$ be those traceless symmetric tensors on $\overbar{M}$ such that they lie in $V_{0},V_{1},V_{2}$ when restricted to the boundary, respectively. 

  An important property of these operators that they are uniformly degenerate at the boundary, and there are a finite set of numbers, called {\textit{indicial roots}}
which characterize invertibility of the associated indicial operators
\cite{Leebook, Mazzeo}.  
The following proposition gives the indicial roots of these operators, and will be used throughout the remainder of the paper. For simplicity, we state the result for solutions defined on all of $M$, but note that the same conclusions hold only assuming that the solutions are defined in a neighborhood of the boundary of $M$. 
\begin{prop}
\label{weights}
	Let $(M^{m},\p M,g ,x)$ be an AH manifold. 
\begin{itemize}
\item
The indicial roots associated with $P_{0}$ are given by\\
	\begin{align}
		\delta_{0,P_0}^{+},\delta_{0,P_0}^{-}=\frac{m-1\pm \sqrt{(m-1)^{2}+8(m-1)}}{2}.	
	\end{align}
If $P_{0}(f)=0$ for some function $f \in L^2(M)$ 
then $f= f_{\infty} x^{\delta_{0,P_{0}}^{+}}+o(x^{\delta_{0,P_{0}}^{+}})$ 
as $x \rightarrow 0$, where $f_{\infty} : \overbar{M} \rightarrow \R$,  
and if $f_{\infty}=0$ on $\p M$, then $f \equiv 0$.\\
\item
The indicial roots associated with $P_{1}$ are given by\\
\begin{align}
\begin{split}
	&\delta_{0,P_1}^{+},\delta_{0,P_1}^{-}=m,-1\\
	&\delta_{1,P_1}^{+},\delta_{1,P_1}^{-}=\frac{m-1\pm \sqrt{(m-1)^{2}+8(m-1)}}{2},
\end{split}
\end{align}
where the roots $\delta^{\pm}_{0,P_1}$ correspond to sections of $\Omega^{1}_{t}\overbar{M}$ and $\delta_{1,P_1}^{\pm}$ corresponds to sections $\Omega^{1}_{n}\overbar{M}$. Moreover, if
$\alpha \in \Omega^1(M)$ in is $L^2$ and solves $P_{1}(\alpha) = 0$, then we have the following expansion as $x \rightarrow 0$
\begin{align}
\alpha=\alpha_{t}x^{\delta_{0,P_{1}}^{+}}+o(x^{\delta_{0,P_{1}}^{+}}),
\end{align}
where $x\alpha_{t}\in \Omega^{1}_{t}\overbar{M}$ and if $x \alpha_{t}\equiv 0$
on $\partial M$ then 
\begin{align}
\alpha=\alpha_{n}x^{\delta_{1,P_{1}}^{+}}+o(x^{\delta_{1,P_{1}}^{+}}),
\end{align}
where $x\alpha_{n} \in \Omega^{1}_{n}\overbar{M}$ and if $x \alpha_{n}\equiv 0$ on $\partial M$ then $\alpha\equiv 0$.\\
\item
The indicial roots associated with $P_{2}$ are given by\\
\begin{align}
\begin{split}
&\delta_{0,P_2}^{+},\delta_{0,P_2}^{-}=\frac{m-1\pm \sqrt{(m-1)^{2}+16}}{2}\\
&\delta_{1,P_2}^{+},\delta_{1,P_2}^{-}=m,-1,
\end{split}
\end{align}
where $\delta^{\pm}_{0,P_2}$ corresponds to sections of $\Omega^{2}_{t}\overbar{M}$ and $\delta_{1,P_2}^{\pm}$ corresponds to sections of $(\Omega^{1}_{n} \overbar{M} \wedge \Omega^{1}_{t} \overbar{M})$. Moreover, if $\omega \in \Omega^2(M)$ is in $L^2$ and 
satisfies $P_{2}\omega = 0$, then we have the following expansion as $x \rightarrow 0$
\begin{align}
\omega=\omega_{t}x^{\delta_{0,P_{2}}^{+}}+o(x^{\delta_{0,P_{2}}^{+}})
\end{align}
where $x^{2}\omega_{t}\in \Omega^{2}_{t}\overbar{M}$ and if 
$x^2 \omega_{t}\equiv 0$ on $\partial M$ then 
\begin{align}
\omega=\omega_{n}x^{\delta_{1,P_{2}}^{+}}+o(x^{\delta_{1,P_{2}}^{+}}),
\end{align}
where $x^{2}\omega_{n}\in (\Omega^{1}_{n} \overbar{M} \wedge \Omega^{1}_{t} \overbar{M})$ and if $x^2 \omega_{n}\equiv 0$ on $\partial M$ then $\omega \equiv 0$.\\
\item
The indicial roots associated with $P$ are given by\\
\begin{align}
\begin{split}
&\delta_{0,P}^{+},\delta_{0,P}^{-}=m-1,0\\
&\delta_{1,P}^{+},\delta_{1,P}^{-}=m,-1\\
&\delta_{2,P}^{+},\delta_{2,P}^{-}=\frac{m-1\pm \sqrt{(m-1)^{2}+8(m-1)}}{2},
\end{split}
\end{align}
where $\delta_{0,P}^{\pm},\delta_{1,P}^{\pm},\delta_{2,P}^{\pm}$ corresponds to sections of $V_{0},V_{1},V_{2}$, respectively. Moreover, if $h \in S^2_0 (T^*M)$ is in $L^2$ 
and satisfies $Ph = 0$, then we have the following expansion as $x \rightarrow 0$
\begin{align}
h=\sigma_{0}x^{\delta_{0,P}^{+}}+o(x^{\delta_{0,P}^{+}})
\end{align}
where $x^{2}\sigma_{0}\in \overbar{V}_{0}$ and if $x^2 \sigma_{0}\equiv 0$ on $\partial M$ 
then 
\begin{align}
h=\sigma_{1}x^{\delta_{1,P}^{+}}+o(x^{\delta_{1,P}^{+}}),
\end{align}
where $x^{2}\sigma_{1} \in \overbar{V}_{1}$ and if $x^2 \sigma_{1}\equiv 0$ on $\partial M$ then 
\begin{align}
h=\sigma_{2}x^{\delta_{2,P}^{+}}+o(x^{\delta_{2,P}^{+}})
\end{align}
where $x^{2}\sigma_{2} \in \overbar{V}_{2}$ and if $x^2 \sigma_{2}\equiv 0$ on $\partial M$ then $h\equiv 0$.	
\end{itemize}
\end{prop}
\begin{proof}
The proof of existence of an expansion around infinity for uniformly degenerate elliptic operators can be found in \cite{MazzeoHodge, Mazzeo}. The vanishing results follow from a unique continuation principle due to Mazzeo \cite{Mazzeo1, Mazzeo}. The calculation of indicial roots can be found in \cite{Lee, Leebook, MazzeoHodge}. 
\end{proof}
The reader will notice that there are several coincidences between the indicial roots in Proposition \ref{weights}. This is not an accident, due the the following relations between the various operators (which also contains some formulas which will be useful later). 
\begin{lem} 
\label{operators}
Let $(M^{m},g)$ be an Einstein manifold such that $Ric_{g}=\Lambda g$ then we have the following relations for $f: M \rightarrow \R$, $\omega \in \Omega^1(M)$, 
and $h \in \Gamma(S^2 (T^*M))$. 
\begin{enumerate}
\item
If $P_0(f) =0$ then $P_1 (df) = 0$.
\item
$P ( f \cdot g) = \frac{1}{2}P_0(f) \cdot g$.
\item
If $P_1(\omega) = 0$ then $P_0 (\delta \omega) = 0$ and $P_2 (d \omega) = 0$. 
\item
If $Ph = 0$ then $P_1(B h) = 0$ and $P_0 ( \delta B h) = 0$.  
\item
$P\delta^{\ast}=\delta^{\ast}B\delta^{\ast}$.
\end{enumerate}
\end{lem}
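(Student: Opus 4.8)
The plan is to reduce all five identities to two inputs. The first is the observation, already contained in the proof of Proposition~\ref{weights}, that on an Einstein manifold with $Ric_g=\Lambda g$ the operators $P_0$ on functions, $2P_1$ on $1$-forms, and $P_2$ on $2$-forms all coincide with $\Delta_H-2\Lambda$ on their respective bundles (this is the Bochner/Weitzenb\"ock identity together with the fact that the contraction of $\ring{R}$ against the metric is $Ric$). The second is the classical second variation formula
\begin{align*}
d_g Ric(h)=\frac{1}{2}\nabla^{\ast}\nabla h-\ring{R}(h)+\frac{1}{2}\big(Ric\circ h+h\circ Ric\big)-\delta^{\ast}_g B_g h,
\end{align*}
which on an Einstein background reads $d_g Ric(h)=P_g h+\Lambda h-\delta^{\ast}_g B_g h$; this is the formula already used, in the Ricci-flat case, in the proof of Theorem~\ref{Pthm}.

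Items (1), (2), (3) are then immediate. For (1), $P_1(df)=\frac{1}{2}(\Delta_H-2\Lambda)(df)=\frac{1}{2}d\big((\Delta_H-2\Lambda)f\big)=\frac{1}{2}d(P_0 f)$, using that $\Delta_H$ commutes with $d$. For (3), the hypothesis $P_1\omega=0$ reads $(\Delta_H-2\Lambda)\omega=0$; applying $\delta$, which commutes with $\Delta_H$, gives $(\Delta_H-2\Lambda)(\delta\omega)=P_0(\delta\omega)=0$, and applying $d$ gives $(\Delta_H-2\Lambda)(d\omega)=P_2(d\omega)=0$. For (2), since $\nabla g=0$ we have $\nabla^{\ast}\nabla(fg)=(\nabla^{\ast}\nabla f)g$, and $\ring{R}(fg)=f\,\ring{R}(g)=f\,Ric=\Lambda f g$, so $P(fg)=\frac{1}{2}(\nabla^{\ast}\nabla f-2\Lambda f)g=\frac{1}{2}P_0(f)g$.

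For (5) I would use the diffeomorphism invariance of the Ricci tensor. Given a $1$-form $\omega$ with dual vector field $X$, one has $\mathcal{L}_X g=2\delta^{\ast}\omega$, and differentiating the identity $Ric\big((\phi_s)^{\ast}g\big)=(\phi_s)^{\ast}Ric_g=(\phi_s)^{\ast}(\Lambda g)$ in $s$ at $s=0$, where $\phi_s$ is the flow of $X$, yields $d_g Ric(2\delta^{\ast}\omega)=\Lambda\,\mathcal{L}_X g=2\Lambda\,\delta^{\ast}\omega$, hence $d_g Ric(\delta^{\ast}\omega)=\Lambda\,\delta^{\ast}\omega$. Substituting $h=\delta^{\ast}\omega$ into the Einstein form of the second variation formula and cancelling the common term $\Lambda\,\delta^{\ast}\omega$ gives $P\delta^{\ast}\omega=\delta^{\ast}B\delta^{\ast}\omega$, which is (5) (recall $B\delta^{\ast}=P_1$ by definition).

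Finally, (4) follows from (5) by formal $L^2$-duality together with a trace computation. Both $P=\frac{1}{2}\nabla^{\ast}\nabla-\ring{R}$ and $P_1=\frac{1}{2}(\nabla^{\ast}\nabla-Ric)$ are formally self-adjoint, and $(\delta^{\ast})^{\ast}=\delta$, so the identity $P\delta^{\ast}=\delta^{\ast}P_1$ dualizes to $\delta P=P_1\delta$; thus $Ph=0$ forces $P_1(\delta h)=0$. On the other hand, taking $tr_g$ of $Ph=0$ and using $tr_g(\ring{R}h)=\inn{Ric,h}=\Lambda\,tr_g h$ gives $\frac{1}{2}(\nabla^{\ast}\nabla-2\Lambda)(tr_g h)=\frac{1}{2}P_0(tr_g h)=0$, so $P_0(tr_g h)=0$, whence $P_1(d\,tr_g h)=0$ by (1). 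Adding, $P_1(Bh)=P_1(\delta h)+\frac{1}{2}P_1(d\,tr_g h)=0$, and applying (3) to the $1$-form $Bh$ gives $P_0(\delta Bh)=0$. The only non-routine ingredient in this scheme is the second variation formula for $Ric$ (equivalently, identity (5) itself), which is classical \cite{Besse}; everything else amounts to commuting $d$ and $\delta$ past $\Delta_H$, taking traces, and keeping track of the curvature sign conventions under which $\ring{R}(g)=Ric$ and $tr_g(\ring{R}h)=\inn{Ric,h}$. That bookkeeping, and fixing the sign in the $d_g Ric$ formula consistently with the paper's conventions, is where a full write-up would spend most of its effort.
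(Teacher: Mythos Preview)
Your proposal is correct, and for items (1)--(3) and (5) it is essentially identical to the paper's argument: both reduce (1), (3) to the commutation of $d,\delta$ with $\Delta_H$ after rewriting $P_0,2P_1,P_2$ as $\Delta_H-2\Lambda$ via Bochner, and both obtain (5) by plugging $h=\delta^{\ast}\omega$ into the Einstein-case formula $d_gRic(h)=P(h)-\delta^{\ast}B(h)+\Lambda h$ together with $d_gRic(\delta^{\ast}\omega)=\Lambda\,\delta^{\ast}\omega$.

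For (4) the routes diverge slightly. The paper does not dualize (5); instead it applies $B$ directly to the linearized Ricci identity and invokes the companion Bianchi relation $B\circ d_gRic=\Lambda B$ (the formal adjoint of the relation $d_gRic\circ\delta^{\ast}=\Lambda\delta^{\ast}$ you derived), obtaining the operator identity $BP=B\delta^{\ast}B=P_1B$ in one line. Your approach instead dualizes $P\delta^{\ast}=\delta^{\ast}P_1$ to get $\delta P=P_1\delta$, then treats the $\tfrac{1}{2}d\,tr_g$ part of $B$ separately via the trace computation $tr_g(Ph)=\tfrac{1}{2}P_0(tr_g h)$ and item~(1). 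Both are valid; the paper's version is shorter and yields the stronger operator identity $BP=P_1B$ without assuming $Ph=0$, while yours makes the role of the trace explicit and avoids citing the second Bianchi-type identity as a separate input.
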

\begin{proof}
The proof is elementary, and is omitted. 
\end{proof}
\section{From germs to boundary tensors}
\label{sectionk}
To complete the proof of Theorem \ref{theorem2}, we will need to 
find global solutions of $P h = 0 $ on $M$ with prescribed
leading order quadratic asymptotics at the point $p_0$, 
and such that $h$ is bounded (in the interior metric) on approach to conformal infinity. 
The argument is based on the four-dimensional case 
considered by Biquard in \cite{Biquard}, but there are some key differences. We have the following definition.
\begin{deff}{\em
Let $(M^m, \p M,g,x)$ be an AH orbifold with one singular point $p_{0}$. Let $F_{k}$ denote the space of harmonic polynomials of degree $k$, in normal coordinates at $p_0$. 
We refer to elements of $F_k \otimes S^{2}_{0}(T^*_{p_{0}}M)$ as $k$-germs at $p_0$.
}
\end{deff}
We next need a few basic lemmas.
\begin{lem}
\label{lemm}
Let $(M^{m},\p M,x,g)$ be an AH manifold and let $h$ be a symmetric two tensor defined in neighborhood of the boundary. Then there exist a one-form $\alpha$ defined in neighborhood of the boundary such that $s=h+\delta^{\ast}\alpha$ is in radial gauge (i.e. $\p_{x}\lrcorner s=0$).
Furthermore, if $|h|_g =o(x^{m-1})$,  
then $\alpha$ can be chosen to satisfy 
$|\alpha|_g = o(x^{m-1})$ as $x \rightarrow 0$.
\end{lem}
\begin{proof}
This is easily proved using a special boundary defining function $x$ such that
\begin{align}
	g=\sinh^{-2}(x)(dx^{2}+g_{x}),
\end{align}
where $g_{x}$ is an $x$-dependent metric on a slice, and $g_{0}=\overbar{g}$.
This is proved, for example, in \cite{Lars}. The remainder of the proof is a standard calculation, and is omitted. 
\end{proof}
We also need the following analytic continuation result for analytic Riemannian manifolds.
\begin{lem}
\label{analytic}
Let $(M^m, \p M, g,x)$ be AHE and let $P(s)=0$ in some open 
set $V \subset M$ such that $s=\delta^{\ast} \alpha$ for some one-form $\alpha$ defined in some open ball $U \subset V$ then $\alpha$ can be analytically continued to a vector field $\tilde{\alpha}$ defined on any simply connected domain $U\subset U_{1} \subset V$ such that $s=\delta^{\ast}\tilde{\alpha}$. Furthermore, if
$V$ contains a punctured ball around $p_0$, and 
$s=O(|z|^{\gamma})$ for $\gamma \neq -1$ where $|z|$ is distance to the point $p_0$, 
then $\tilde{\alpha}=O(|z|^{\gamma+1})$ as $|z| \rightarrow 0$. 
\end{lem}
\begin{proof}
Since Einstein metrics are real analytic in harmonic coordinates one can use the infinitesimal version of analytic continuation technique (see \cite[Corollary~6.4]{kob}) to extend $\alpha$ to a larger domain. We refer to \cite[Lemma 2.6]{ander}  for a complete proof.
Finally, an examination of the proof shows that the solution is obtained by integration, and the last statement follows from this (the details are omitted).  
\end{proof}

Another crucial result we will need is the following unique continuation theorem due to Biquard.
\begin{thm}[Biquard \cite{biquniq}]
\label{uniquecon}
Let $(M^{m},\p M,g,x)$ be AHE. If $h \in \Gamma( S^2(T^*U))$,
where $U$ is a neighborhood of $\p M$,  
is a solution of $d_g Ric (h) = \Lambda h$,
and $h$ is in radial gauge (i.e $\p_{x} \lrcorner h=0$), 
such that $\abs{h}_{g}=o(x^{m-1})$ as $x\to 0$, then $h\equiv 0$.
\end{thm}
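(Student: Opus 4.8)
Since Theorem~\ref{uniquecon} is due to Biquard, I would reconstruct its proof as a unique continuation statement for the gauge‑broken linearized Einstein equation, ultimately reduced to a Carleman estimate for a uniformly degenerate operator in the spirit of Mazzeo's unique continuation theorems, adapted to the symmetric $2$‑tensor system. First I would put $g$ in the normal form $g = x^{-2}(dx^{2}+g_{x})$ on a collar $(0,\epsilon)\times\p M$ (with the AHE normalization $\Lambda=-(m-1)$), using the normal‑form lemma from the proof of Proposition~\ref{weights}; in these coordinates the radial gauge condition $\p_{x}\lrcorner h=0$ is precisely the statement that $h=h(x)$ is a one‑parameter family of symmetric $2$‑tensors on the slices $\{x\}\times\p M$, with no $dx$‑components. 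This is the point that makes the otherwise underdetermined equation $d_{g}Ric(h)=\Lambda h$ tractable, and it is exactly what the hypothesis supplies.

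Next, using the identity \eqref{dgp}, the equation $d_{g}Ric(h)=\Lambda h$ is equivalent to $P(h)=\delta^{\ast}B_{g}(h)$, where $B_{g}(h)$ is first order in $h$; written out on the collar this is a second‑order equation $Lh=0$ with $L$ uniformly degenerate at $x=0$, schematically
\begin{align*}
x^{2}\p_{x}^{2}h + x\,A(x)(\p_{x}h) + \big(x^{2}\Delta_{g_{x}}+C(x)\big)h + (\text{lower order}) = 0,
\end{align*}
with $A,C$ smooth bundle maps up to $x=0$. The zeroth‑order part of $C$ at $x=0$ is the indicial operator of $L$, assembled from the operators analysed in Proposition~\ref{weights}; the threshold $m-1$ in the hypothesis is calibrated to be the largest indicial root that a radial‑gauge solution of the linearized Einstein equation can realize.

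The technical heart, and the step I expect to be the main obstacle, is a Carleman estimate: there should exist $x_{0}>0$ and, for every integer $N$ past a fixed threshold determined by $m-1$, a constant $C_{N}$ with
\begin{align*}
N\,\Vert x^{-N}u\Vert_{L^{2}(g)} \le C_{N}\,\Vert x^{-N}Lu\Vert_{L^{2}(g)}
\end{align*}
for all symmetric $2$‑tensors $u$ in radial gauge compactly supported in $\{0<x<x_{0}\}$. One conjugates $L$ by $x^{-N}$ (or by $x^{-N}e^{\phi(x)}$ with $\phi$ convex), decomposes the conjugated operator into its symmetric and skew‑symmetric parts, and integrates by parts so that the cross terms dominate the lower‑order errors once $N$ is large. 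The subtlety over the scalar Laplacian case is that the weight must beat the borderline indicial root $m-1$ exactly, and that the entire computation must be carried out for the tensor system with its curvature couplings; this is the refinement due to Biquard that I would expect to be delicate.

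Finally, I would conclude as follows. The hypothesis $\abs{h}_{g}=o(x^{m-1})$, inserted into the indicial analysis of $L$ together with $Lh=0$, forces $h$ to vanish to infinite order at $\p M$ (this is where the precise value $m-1$ is used). Applying the Carleman estimate to $\psi h$, for a cutoff $\psi$ equal to $1$ near $\p M$ and supported in $\{x<x_{1}\}$, yields $N\Vert x^{-N}\psi h\Vert\le C_{N}\Vert x^{-N}[L,\psi]h\Vert$ with the right‑hand side supported where $\psi$ varies, i.e.\ in $\{x\ge x_{1}/2\}$; letting $N\to\infty$ forces $h\equiv0$ on the collar $\{0<x<x_{1}\}$. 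Since $g$ is Einstein, hence real‑analytic in harmonic coordinates, after a gauge change (cf.\ the relations in Lemma~\ref{operators}) $h$ satisfies an elliptic equation with real‑analytic coefficients, so classical interior unique continuation propagates the vanishing to the connected component of $U$ meeting $\p M$, which is all that is needed.
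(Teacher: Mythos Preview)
The paper does not prove Theorem~\ref{uniquecon}; it is quoted as a result of Biquard (with a citation to \cite{biquniq}) and used as a black box in the proof of Proposition~\ref{injprop}. There is therefore no proof in the paper to compare your proposal against.

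That said, your sketch is a plausible outline of the strategy behind such results: put the metric in normal form on a collar, use the radial gauge to reduce the linearized Einstein equation to a uniformly degenerate second-order system, show that the decay hypothesis $|h|_g = o(x^{m-1})$ forces infinite-order vanishing at the boundary via the indicial analysis, and then apply a Carleman estimate to conclude $h\equiv 0$ near $\partial M$. One point to be careful about: you invoke interior real-analytic unique continuation at the end to propagate vanishing through $U$, but the statement only asserts $h\equiv 0$ on $U$, and Biquard's actual argument works entirely at the boundary level---the Carleman step already gives vanishing on the whole collar, and since $U$ is a neighborhood of $\partial M$ that suffices. Also, the step ``$o(x^{m-1})$ implies infinite-order vanishing'' is not immediate from the indicial roots alone, since the indicial operator for the full gauge-broken system in radial gauge has a more intricate structure than the operators $P_0,P_1,P_2,P$ treated in Proposition~\ref{weights}; this is precisely the content of Biquard's paper and is where the real work lies.
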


We next define a mapping on $0$-germs
\begin{align}
S_0^{\infty} : S^2_0(T^*_{p_0}M_0)   \to V_{0},
\end{align}
where $V_0$ was defined in \eqref{tensors}. 
Given a traceless symmetric matrix $a_{ij}$,
then there is a unique solution 
$\tilde{h} \in \Gamma(S^2_0(T^*M_0))$ of $P( \tilde{h}) = 0 $ satisfying 
\begin{align}
\tilde{h} = \frac{a_{ij}}{|z|^{m-2}} dz^i \odot dz^j + O(|z|^{3-m})
\end{align}
as $|z|  \rightarrow 0$, and such that 
$\tilde{h} \in L^2 (M_{0}\setminus B(p_{0},\epsilon))$.
The solution is unique since $M_0$ is assumed to be non-degenerate. 
From Proposition \ref{weights},  $\tilde{h}$ admits an expansion  
\begin{align}
\tilde{h} = \sigma_0 x^{m-1}  + o(x^{m-1}), 
\end{align}
with $x^2 \sigma_0 \in \overbar{V}_0$,
as $x \rightarrow 0$.  Since $\tilde{h}$ is unique,
the mapping 
\begin{align}
S_0^{\infty} (a_{ij} dz^i \odot dz^j) \equiv x^2 \sigma_0|_{\partial M_0}
\end{align}
is well-defined. Next, let $E_{0}=Im(S_{0}^{\infty})$. 
In a similar fashion we can define a mapping on $1$-germs
\begin{align}
S_{1}^{\infty}:F_1 \otimes S^{2}_{0}(T^*_{p_0} M_0) \to E_{0}^{\perp} \subset V_0,
\end{align}
with respect to the $L^2$ inner product on $\p M_0$, defined by the following. Given
\begin{align}
a_{ijk}z^{i}dz^{j}\odot dz^{k}\in F_1 \otimes S^{2}_{0}(T^*_{p_{0}}M_0 )	
\end{align}
then there is a solution 
$\tilde{h} \in \Gamma ( S^2_0 (T^*M_0))$ of $P( \tilde{h}) = 0 $ satisfying 
\begin{align}
\tilde{h} = \frac{a_{ijk}z^i}{|z|^{m}} dz^j \odot dz^k + O(|z|^{2-m})
\end{align}
as $|z|  \rightarrow 0$, and such that 
$\tilde{h} \in L^2 (M_0 \setminus B(p_0,\epsilon))$. Again, from 
Proposition \ref{weights} there is an expansion 
\begin{align}
\tilde{h} = \sigma_0 x^{m-1}  + o(x^{m-1}), 
\end{align}
where $x^2 \sigma_0 \in \overbar{V}_0$, as $x \rightarrow 0$.  Since $\tilde{h}$ is only unique 
up to the solutions from the $S_0^{\infty}$ construction above, the mapping 
\begin{align}
S_1^{\infty} (a_{ijk} z^i dz^j \odot dz^k) = x^2\sigma_0|_{\p M_0}
\end{align}
is well-defined modulo the image of $S_0^{\infty}$ and therefore gives a well-defined mapping to $E_{0}^{\perp}$. Letting $E_1 = Im(S^{\infty}_1)$,
in a similar fashion, we may define a mapping on $2$-germs
\begin{align}
S_2^{\infty} : S^2_0(\R^{m}) \otimes S^{2}_{0}(T^*_{p_{0}}M_0)  \to (E_{0}\oplus E_{1})^{\perp}.
\end{align}
by the following. Given 
\begin{align}
a_{ijkl} z^i z^j dz^k\odot dz^l \in F_2 \otimes  S^2_0(T^*_{p_{0}}M_0),
\end{align}
then there is a solution 
$\tilde{h} \in \Gamma ( S^2_0 (T^*M_0))$ of $P( \tilde{h}) = 0 $ satisfying 
\begin{align}
\tilde{h} = \frac{a_{ijkl}z^i z^j}{|z|^{m+2}} dz^k \odot dz^l + O(|z|^{1-m})
\end{align}
as $|z|  \rightarrow 0$, and such that 
$\tilde{h} \in L^2 (M_0 \setminus B(p_0,\epsilon))$. Again, from 
Proposition \ref{weights} there is an expansion 
\begin{align}
\tilde{h} = \sigma_0 x^{m-1}  + o(x^{m-1}), 
\end{align}
where $x^2 \sigma_0 \in \overbar{V}_0$ as $x \rightarrow 0$.  
Since $\tilde{h}$ is only unique 
up to the solutions from the $S_0^{\infty}$ and $S_{1}^{\infty}$ constructions above, the mapping 
\begin{align}
S_2^{\infty} (a_{ijkl}) = x^2 \sigma_0|_{\p M_0} 
\end{align}
is well-defined modulo the image of $S_0^{\infty}$ and $S_{1}^{\infty}$ and therefore gives a well-defined mapping to $(E_{0}\oplus E_{1})^{\perp}$. 
\begin{prop}
\label{injprop}
Let $(M_0^{m},\p M_0,g,x)$ be an AHE orbifold with one singular point $p_{0}$ 
of type $\Gamma_n$($m=2n$) and non-degenerate. Then we have the following:
\begin{enumerate}
\item $S_{0}^{\infty}$ is injective.\\
 \item $S_{1}^{\infty}$ is injective.\\
\item The kernel of $S_{2}^{\infty}$ is the  
space of $2$-germs of the form $|z|^{m+2} \mathcal{K}_{euc}(\frac{\alpha_{1}}{|z|^{m}})$, 
where $\alpha_1$ is a one-form with linear coefficients.\footnote{
The reader should compare this statement with \cite[Lemma 10.1]{Biquard}, from which our conclusion differs.} 
\end{enumerate}
\end{prop}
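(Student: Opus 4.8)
The plan is to treat all three parts through a single mechanism: study a solution $\tilde h$ of $P_{g_0}\tilde h=0$ that is traceless, has a prescribed homogeneous leading singularity at $p_0$, lies in $L^2$ near $\p M_0$, and moreover satisfies $\tilde h=o(x^{m-1})$ at conformal infinity (equivalently, by Proposition~\ref{weights}, has vanishing leading coefficient $x^2\sigma_0|_{\p M_0}$). For a $0$-germ this is exactly the condition to lie in $\ker S_0^\infty$, and for $1$- and $2$-germs it becomes that condition after subtracting off the (uniquely determined, using non-degeneracy and, for $k=2$, part~(1)) lower-order solutions produced by $S_0^\infty$ and $S_1^\infty$, which does not change the leading singularity at $p_0$. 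The first step is to show such a $\tilde h$ vanishes near $\p M_0$. By Lemma~\ref{lemm} choose a one-form $\beta$ near $\p M_0$ so that $s:=\tilde h+\delta^{\ast}_{g_0}\beta$ is in radial gauge with $|s|_{g_0}=o(x^{m-1})$. Since $d_{g_0}Ric(\delta^{\ast}_{g_0}\alpha)=\Lambda\,\delta^{\ast}_{g_0}\alpha$ for every one-form $\alpha$, $s$ solves the linearized Einstein equation $d_{g_0}Ric(s)=\Lambda s$ as soon as $\tilde h$ does, i.e. as soon as $B_{g_0}\tilde h=0$; using that $\tilde h$ may be taken in Bianchi gauge, Theorem~\ref{uniquecon} then gives $s\equiv 0$, hence $\tilde h=-\delta^{\ast}_{g_0}\beta$ in a punctured neighborhood of $\p M_0$.

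Next, transport this to $p_0$. By Lemma~\ref{analytic} the one-form $-\beta$ analytically continues along $M_0\setminus\{p_0\}$ to a one-form $\tilde\beta$ on a punctured neighborhood of $p_0$ with $\tilde h=\delta^{\ast}_{g_0}\tilde\beta$ there; since $\tilde h$ is homogeneous of degree $2-m-k$ at $p_0$, $\tilde\beta$ is homogeneous of degree $3-m-k$ to leading order, with Euclidean leading term $\tilde\beta_0$. Because $\tilde h$ is traceless, $\mathrm{tr}_{g_0}\delta^{\ast}_{g_0}\tilde\beta=-\delta_{g_0}\tilde\beta=0$, so $\tilde h=\mathcal{K}_{g_0}\tilde\beta$ and $\delta_{euc}\tilde\beta_0=0$. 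Moreover, by Lemma~\ref{operators}(5), $0=P_{g_0}\tilde h=\delta^{\ast}_{g_0}(P_1\tilde\beta)$, so $P_1\tilde\beta$ is a Killing field near $p_0$, hence bounded; comparing with the Euclidean model $P_1=-\tfrac12\Delta_{euc}+(\text{lower order})$ and using homogeneity of $\tilde\beta$, the degree $1-m-k$ part of $\Delta_{euc}\tilde\beta_0$ cannot be cancelled, so $\tilde\beta_0$ must be $\Delta_{euc}$-harmonic. Therefore the leading term of $\tilde h$ equals $\delta^{\ast}_{euc}\tilde\beta_0=\mathcal{K}_{euc}\tilde\beta_0$, which is $\delta_{euc}$-closed since $\tilde\beta_0$ is harmonic and $\delta_{euc}$-closed.

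Now compare with the germ. For a $0$-germ $a_{ij}\,dz^i\odot dz^j$, one has $\delta_{euc}(a_{ij}|z|^{2-m}dz^i\odot dz^j)=(m-2)a_{il}z^i|z|^{-m}dz^l$, so $\delta_{euc}$-closedness forces $a\equiv 0$: $S_0^\infty$ is injective. For a $1$-germ $a_{ijk}z^i\,dz^j\odot dz^k$ ($a$ symmetric and trace-free in $(j,k)$), $\delta_{euc}$-closedness of $a_{ijk}z^i|z|^{-m}dz^j\odot dz^k$ forces $a$ to also be antisymmetric in $(i,j)$, and an elementary index-permutation argument (a tensor antisymmetric in one pair of slots and symmetric in an overlapping pair vanishes) gives $a\equiv 0$: $S_1^\infty$ is injective. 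For a $2$-germ $H$, $\tilde\beta_0$ is homogeneous of degree $1-m$ and $\Delta_{euc}$-harmonic, hence by the Kelvin transform equals $\alpha_1/|z|^m$ for a one-form $\alpha_1$ with linear coefficients, so $H=|z|^{m+2}\mathcal{K}_{euc}(\alpha_1/|z|^m)$; this is the inclusion $\ker S_2^\infty\subseteq\{\,|z|^{m+2}\mathcal{K}_{euc}(\alpha_1/|z|^m):\alpha_1\ \text{linear}\,\}$. For the reverse inclusion, given such $H$: each component of $\alpha_1/|z|^m$ is $\Delta_{euc}$-harmonic, so (by the Fredholm theory of $P_1$ on AHE manifolds, using that $P_1=\tfrac12(\Delta_H+2(m-1))$ has trivial $L^2$-kernel and cokernel by Weitzenb\"ock) there is a one-form $\gamma$ on $M_0\setminus\{p_0\}$ with $P_1\gamma=0$, leading singularity $\alpha_1/|z|^m$ at $p_0$, and $L^2$ decay near $\p M_0$. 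Then $\tilde h:=\mathcal{K}_{g_0}\gamma$ is traceless, $P_{g_0}\tilde h=\delta^{\ast}_{g_0}(P_1\gamma)+\tfrac1{2m}P_0(\delta_{g_0}\gamma)g_0=0$ by Lemma~\ref{operators}(2),(3),(5), it equals $H/|z|^{m+2}+O(|z|^{1-m})$ near $p_0$, and since $\gamma=O(x^m)$ near $\p M_0$ one computes $|\mathcal{K}_{g_0}\gamma|_{g_0}=O(x^m)=o(x^{m-1})$, so the boundary coefficient vanishes and $H\in\ker S_2^\infty$.

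The step I expect to be hardest is the passage from $\tilde h=o(x^{m-1})$ at conformal infinity to $\tilde h\equiv 0$ near $\p M_0$: Theorem~\ref{uniquecon} applies only to linearized Einstein solutions in radial gauge, so one must verify that the $S_k^\infty$-solutions can be taken divergence-free. The iterated vanishing in Proposition~\ref{weights} only kills the $V_0$-part of the expansion at infinity; the residual $V_1$- and $V_2$-contributions must be absorbed into the gauge one-form $\beta$ from Lemma~\ref{lemm}, which requires decay estimates for $B_{g_0}\tilde h$ obtained from the fact that it solves $P_1(B_{g_0}\tilde h)=0$. A secondary point is that the analytic continuation of $\beta$ into a punctured neighborhood of $p_0$ must be performed along a simply connected domain exhausting $M_0\setminus\{p_0\}$, handled as in~\cite{ander}.
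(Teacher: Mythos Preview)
Your overall architecture matches the paper's: produce a solution $\tilde h$ of $P\tilde h=0$ that is traceless, has the prescribed blow-up at $p_0$, is $L^2$ near $\p M_0$, and has vanishing $V_0$-leading term at infinity; then use Biquard's unique continuation to show $\tilde h=\delta^*\tilde\beta$, and read off the leading germ at $p_0$.  The reverse inclusion in part~(3) is handled the same way in both arguments.

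The genuine gap is the step you yourself flag as hardest: you assert that ``$\tilde h$ may be taken in Bianchi gauge'' so that $P\tilde h=0$ upgrades to $d_{g_0}Ric(\tilde h)=\Lambda\tilde h$, but you give no mechanism for this.  Absorbing the $V_1,V_2$ pieces into the radial-gauge form $\beta$ does not help: after setting $s=\tilde h+\delta^*\beta$ one has $d_{g_0}Ric(s)-\Lambda s=-\delta^*B_{g_0}\tilde h$, and no choice of $\beta$ removes this term.  Nor can one force $B_{g_0}\tilde h=0$ by modifying $\tilde h$ within its defining class, since non-degeneracy makes $\tilde h$ unique, and $B_{g_0}\tilde h$ is a genuine $P_1$-solution with a singularity at $p_0$ (hence not in $L^2$, so the Bochner argument for $\ker P_1=0$ does not apply).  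The paper's fix is different and is the missing idea: one shows, using the indicial-root hierarchy for $P_1$ and $P_2$ in Proposition~\ref{weights}, that $d(B_{g_0}\tilde h)\equiv 0$, so $B_{g_0}\tilde h=\tfrac{1}{\Lambda}df$ for a function $f$ with $P_0 f=0$ (Lemma~\ref{smlem}).  Then one corrects by a \emph{trace} piece, setting $s+\tilde f g_0$ with $\tilde f=\tfrac{2f}{(2-m)\Lambda}$; this simultaneously satisfies $P(s+\tilde f g_0)=0$ and $B(s+\tilde f g_0)=0$, so it solves the linearized Einstein equation and unique continuation applies.  The upshot $s+\tilde f g_0=\delta^*\tilde\alpha$ is then equivalent to $s=\mathcal K\tilde\alpha$ with $P_1\tilde\alpha=0$, which is where you wanted to be.

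A smaller point: your endgame for $S_1^\infty$ via ``$\delta_{euc}$-closedness forces $a$ antisymmetric in $(i,j)$'' is not literally correct, since that condition only determines the symmetric part of $a$ in $(i,j)$ up to a trace term $\tfrac{1}{m}a_{jjk}\delta_{il}$.  The paper instead argues at the level of the one-form $\tilde\alpha$: for a $1$-germ, $\tilde\alpha=O(|z|^{2-m})$ with leading part a constant-coefficient one-form times $|z|^{2-m}$, and no nonzero constant one-form is $\Gamma_n$-invariant for $n\ge 3$; hence $\tilde\alpha$ has a removable singularity and vanishes by the $L^2$-kernel argument for $P_1$.  The $\Gamma_n$-invariance is used here in an essential way.
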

\begin{proof}
In any of these cases, let $s$ be the solution from the above construction  
which is blowing up at the point, and $L^2$ in a neighborhood of infinity, so that top $L^{2}$ weight vanishes. By Proposition \ref{weights}, $s$ admits an expansion 
\begin{align}
s=\sigma_{1}x^{m}+o(x^{m})
\end{align}
where $x^2 \sigma_{1} \in \overbar{V}_{1}$, as $x \to 0$. 
A straightforward calculation shows that 
$B s = o(x^{m})$ as $x \to 0$. 
By Lemma \ref{operators}, $P_1(Bs) = 0$, so by Proposition~\ref{weights}, we must have 
\begin{align}
Bs = \alpha_n x^{\delta_{1,P_{1}}^+} + o( x^{\delta_{1,P_{1}}^+})
\end{align} 
where $\alpha_{n} \in \res{\Omega^{1}_{n}\overbar{M_0}}$, as $x \to 0$. 
Then, again by the Lemma~\ref{operators} and Proposition~\ref{weights} and the fact that $\delta_{1,P_{1}}^{+}>\delta_{1,P_{2}}^{+}$, we have that $d B s \equiv 0$. 
We need the following lemma.
\begin{lem} \label{smlem}
The function $f = \delta B s$ satisfies 
\begin{align}
\label{flem1}
B s &= \frac{1}{\Lambda} d f, \\
\label{flem2}
P_0 ( f) &= 0.
\end{align}
\end{lem}
\begin{proof}
Since $tr_g s = 0$ and $ d Bs = 0$,
using Lemma \ref{operators}, we have
\begin{align}
df = d \delta \delta s = (\Delta_H - \delta d) \delta s
= \Lambda \delta s  - \delta d \delta s
= \Lambda \delta s,
\end{align}
and \eqref{flem1} follows. Lemma \ref{operators} directly implies 
\eqref{flem2} since $Ps = 0$. 
\end{proof}
Letting $\tilde{f} = \frac{2 f}{(2-m) \Lambda}$, 
from Lemma \ref{smlem}, we have that 
\begin{align}
P ( s + \tilde{f} g ) = 0,\\
B ( s + \tilde{f} g ) = 0.
\end{align}
Consequently, 
\begin{align}
d_g Ric (  s + \tilde{f} g) &= \Lambda ( s + \tilde{f} g), \\
 s + \tilde{f} g &= o (x^{m-1}),
\end{align}
as $x \rightarrow 0$. 

Then by Theorem \ref{uniquecon} and Lemma \ref{lemm} it follows that
\begin{align}
 s + \tilde{f} g = \delta^{\ast} (\alpha) 
\end{align}
for some one-form $\alpha$ defined in a neighborhood of the boundary. 
Since $(M_0,g)$ is a non-compact and complete Riemannian manifold there exists a geodesic ray $\gamma$ starting from $p_{0}$ to infinity. Now applying Lemma \ref{analytic} inductively on $\gamma$ we can analytically continue $\alpha$ to a one-form $\tilde{\alpha}$ defined in a neighborhood of $\gamma$ such that
\begin{align}
 s + \tilde{f} g = \delta^{\ast} (\tilde{\alpha}).
\end{align}
Equivalently,
\begin{align}
\label{tilde}
s=\mathcal{K}(\tilde{\alpha}),
\end{align}
for a $1$-form $\tilde{\alpha}$ satisfying
\begin{align}
P_{1}(\tilde{\alpha})=0.
\end{align}

In the case of $S_{0}^{\infty}$, let $s$ 
be the solution of $Ps=0$ whose leading term is 
$|z|^{2-m}h_{0}$ at the point, where $h_0$ is a $0$-germ at $p_0$, and $s=o(x^{m-1})$ as $x\to 0$. Since $s=O(|z|^{-m+2})$ around $p_{0}$ and \eqref{tilde} is satisfied, 
by Lemma \ref{analytic}, $\tilde{\alpha}=O(|z|^{-m+3})$. 
The Bochner formula implies that there is no nontrivial $L^2$ kernel for $P_1$.
Since $P_1 \tilde{\alpha} = 0$, this implies that $\tilde{\alpha} = 0$ 
by a standard removable singularity theorem for solutions of $P_1$,
and therefore $h_0 = 0$. 
 
In case of $S_{1}^{\infty}$, given any $1$-germ $h_1$ at $p_0$
let $s$ be the solution of $Ps=0$ 
whose leading term is $\abs{z}^{-m}h_{1}$ at $p_0$, 
and $x^2 s|_{\p M_0} \in E_0^{\perp}$. If $h_1 \in Ker(S_1^{\infty})$ then $s=o(x^{m-1})$ as $x\to 0$.
Since $s=O(\abs{z}^{-m+1})$ as $\abs{z}\to 0$ and \eqref{tilde} is satisfied, by Lemma \ref{analytic}, $\tilde{\alpha}=O(\abs{z}^{-m+2})$. This implies that $\tilde{\alpha} = 0$ by the removable singularity theorem, 
since there is no solution with that leading term 
which is also invariant under group action, and therefore $h_1 = 0$.

In case of $S_{2}^{\infty}$, 
 given any $2$-germ $h_2$ at $p_0$
let $s$ be the solution of $Ps=0$ 
whose leading term is $\abs{z}^{-m-2}h_{2}$ at $p_0$, 
and $x^2 s|_{\p M_0} \in (E_0 \oplus E_1)^{\perp}$. If $h_2 \in Ker(S_2^{\infty})$ then $s=o(x^{m-1})$ as $x\to 0$.
Since $s=O(\abs{z}^{-m+1})$ as $\abs{z}\to 0$ and \eqref{tilde} is satisfied by Lemma \ref{analytic}, $\tilde{\alpha}=O(\abs{z}^{-m+2})$. This implies that
\begin{align}
\tilde{\alpha}=\frac{\alpha_{1}}{\abs{z}^{m}}+O(\abs{z}^{-m+2}),
\end{align}
where $\alpha_{1}$ is a one-form with linear coefficients. Then we have
\begin{align}
s=\mathcal{K}(\tilde{\alpha})=\mathcal{K}_{euc}\Big(\frac{\alpha_{1}}{\abs{z}^{m}}\Big)+O(\abs{z}^{-m+1})
\end{align}
as $|z| \to 0$. For the converse, since there is no nontrivial $L^2$ kernel for $P_1$, 
given a linear form $\alpha_1 = a_{ij} z^i dz^j$, there exists a unique 
solution $\tilde{\alpha}_1$ of $P_1 ( \tilde{\alpha}_1 ) = 0$
which is in $L^2( M_0 \setminus B(p_0, \epsilon))$
satisfying 
\begin{align}
\tilde{\alpha}_1 = |z|^{2-m}  a_{ij} z^i dz^j + O(|z|^{2-m}),
\end{align}
as $|z| \to 0$. By Proposition \ref{weights}, $\tilde{\alpha}_1$
admits an expansion 
\begin{align}
\tilde{\alpha}_1 = \alpha_{t}x^{\delta_{0,P_{1}}^{+}}+o(x^{\delta_{0,P_{1}}^{+}}),
\end{align}
where $x\alpha_{t}\in \Omega^{1}_{t}\overbar{M_0}$, as $x \to 0$.
From Proposition \ref{operators}, it follows easily that 
$h = \mathcal{K} \tilde{\alpha}_1$ 
is a solution of $Ph= 0$, and thus the $2$-germ
$|z|^{m+2} \mathcal{K}_{euc}(\frac{\alpha_{1}}{|z|^{m}})$, 
is in the kernel of $S^{\infty}_2$, where $\alpha_1$ is any 
one-form with linear coefficients.
\end{proof}
\section{From boundary tensors to germs}
\label{sectionl}
The following lemma describes a ``duality'' 
between boundary tensors and germs. We state the following 
\begin{lem}
\label{duality}
Let $(M^{m},\p M,g,x)$ be AH with one orbifold point $p_{0}$
of type $\Gamma_n$. Let $\sigma_{k},\tau_{k}$ be $k$-germs at $p_{0}$ and Let $s$ and $t$ be two solutions for $P$ such that
\begin{align}
\begin{split}
&s= \sigma_{k}\abs{z}^{2-m-2k}+ o(z^{2-m-k}) \text{ as } \abs{z}\to 0\\
&s=\sigma_{\infty}x^{\delta_{0,P}^{+}}	+o(x^{\delta_{0,P}^{+}}) \text{ as } x\to 0,\end{split}
\end{align}
where $x^{2}\sigma_{\infty}\in \overbar{V}_{0}$, and 
\begin{align}
\begin{split}
&t= \tau_{k} + o(\abs{z}^{k}) \text{ as } \abs{z}\to 0\\
&t= \tau_{\infty}x^{\delta_{0,P}^{-}}+o(x^{\delta_{0,P}^{-}}) \text{ as } x\to 0,
\end{split}
\end{align}
where $x^{2}\tau_{\infty}\in \overbar{V}_{0}$. For $k=0$ we have
\begin{align}
\label{k=0}
	(m-2) \frac{\omega_{m-1}}{n}\innn{\sigma_{0},\tau_{0}}=(m-1)\inn{x^2 \sigma_{\infty},x^2\tau_{\infty}},
\end{align}
for $k=1$ we have
\begin{align}
\label{k=1}
	\frac{\omega_{m-1}}{n}\innn{\sigma_{1},\tau_{1}}=(m-1)\inn{x^2 \sigma_{\infty},x^2 \tau_{\infty}},
\end{align}
and for $k=2$ we have
\begin{align}
\label{k=2}
	\frac{2\omega_{m-1}}{mn}\innn{\sigma_{2},\tau_{2}}=(m-1)\inn{x^2 \sigma_{\infty},x^2 \tau_{\infty}},
\end{align}where the left inner product is the germ inner product 
at $p_{0}$ (see the proof for the definition), 
and the right inner product is the $L^{2}$ inner product on the boundary. 
\begin{proof}
Let $M_{0,\epsilon}=\overbar{M}_0\setminus \big{(}B(p_0,\epsilon)\cup \p M_0\times [0,\epsilon)\big{)}$. Integrating by parts on $M_{0,\epsilon} $ implies that
\begin{align}
	\int_{M_{0,\epsilon}}(\inn{Ps,t}-\inn{s,Pt})dV_g=\int_{\p M_{0,\epsilon}}(\inn{-\nabla_{\vec{n}}s,t}+\inn{s,\nabla_{\vec{n}}t})dS_g.
	\end{align}
The interior boundary integral is given by 
\begin{align}
	\lim_{\epsilon \to 0} \int_{\p B(p_{0},\epsilon)}(\inn{-\nabla_{\vec{n}}s,t}+\inn{s,\nabla_{\vec{n}}t})dS_g = (2-m-2k) \int_{S^{m-1}/\Gamma_n} \inn{\sigma_k, \tau_k } dS_{euc},
	\end{align}
where the inner product denotes contraction on only the 
indices of the germs corresponding to $S^2_0(T^*_{p_0}(M_0))$.

Recall from Proposition \ref{weights} that $\delta_{0,P}^{+} = m-1$ 
and $\delta_{0,P}^{-}=0$. Since $\sqrt{\det \overbar{g}}=x^{1-m}\sqrt{\det g}$ when we restrict both metrics 
to $\p M_0 \times \{ \epsilon\}$, it follows that the outer boundary integral is given by 
\begin{align}
\lim_{\epsilon \to 0} \int_{ \p M_0 \times \{\epsilon \}}
(\inn{-\nabla_{\vec{n}}s,t}+\inn{s,\nabla_{\vec{n}}t})dS_g
= (m-1)\int_{\p M_0} \langle x^2 \sigma_{\infty},
x^2 \tau_{\infty} \rangle dS_{g_{\infty}}. 
\end{align}
If $k=0$ then then working in a coordinate chart around $p_{0}$ with $g_{ij}(p_{0})=\delta_{ij}$ we have
\begin{align}
		\sigma_{0}=H_{kl}dz^{k}\otimes dz^{l},\\
		\tau_{0}=K_{kl}dz^{k}\otimes dz^{l},
	\end{align}
and \eqref{k=0} follows immediately, 
with $\innn{ \sigma_0, \tau_0 } = \sum_{k,l} H_{kl}K_{kl}$. 

If $k=1$, then  
\begin{align}
		\sigma_{1}=H_{jkl}z^{j}dz^{k}\otimes dz^{l},\\
		\tau_{1}=K_{jkl}z^{j}dz^{k}\otimes dz^{l},
	\end{align}
and \eqref{k=1} follows from \eqref{sb1} with 
$\innn{\sigma_1, \tau_1 } = \sum_{j,k,l} H_{jkl}K_{jkl}$.

Finally, if $k=2$, then
	\begin{align}
		\sigma_{2}=H_{ijkl}z^{i}z^{j}dz^{k}\otimes dz^{l},\\
		\tau_{2}=K_{ijkl}z^{i}z^{j}dz^{k}\otimes dz^{l},
	\end{align}
and \eqref{k=2} follows from \eqref{sb2} 
with $\innn{ \sigma_2, \tau_2 } = \sum_{i,j,k,l} H_{ijkl}K_{ijkl}$.
\end{proof}
\end{lem}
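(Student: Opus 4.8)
The plan is to carry out a Green's-type integration-by-parts argument on the compact region $M_{0,\epsilon}$ obtained by removing a small ball around $p_0$ and a collar $\p M_0 \times [0,\epsilon)$, exactly as set up in the statement. Since both $s$ and $t$ solve $Ps = Pt = 0$, the interior integral $\int_{M_{0,\epsilon}}(\inn{Ps,t} - \inn{s,Pt})dV_g$ vanishes, so the boundary term over $\p M_{0,\epsilon}$ must also vanish in the limit $\epsilon \to 0$. This boundary has two pieces: the inner sphere $\p B(p_0,\epsilon)$ and the outer slice $\p M_0 \times \{\epsilon\}$. The equality we want then comes from equating (the negatives of) these two contributions. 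The key analytic inputs are the prescribed asymptotic expansions of $s$ and $t$ near $p_0$ and near the conformal boundary, together with the fact that $P$ is formally self-adjoint, so that the only surviving terms in the boundary integrand are the cross terms between the leading behaviors of $s$ and $t$.

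First I would handle the inner boundary. Near $p_0$, write $s = \sigma_k |z|^{2-m-2k} + o(|z|^{2-m-k})$ and $t = \tau_k + o(|z|^k)$. The outward unit normal on $\p B(p_0,\epsilon)$ (pointing toward $p_0$) gives $\nabla_{\vec n} s \approx (2-m-2k)|z|^{1-m-2k}\sigma_k \cdot(\text{radial derivative factor})$ while $\nabla_{\vec n} t = O(|z|^{k-1})$. Multiplying by the area element $\sim \epsilon^{m-1}$ of $\p B(p_0,\epsilon)/\Gamma_n$ and tracking powers of $\epsilon$, the $\inn{-\nabla_{\vec n}s, t}$ term survives with a finite limit, the $\inn{s,\nabla_{\vec n}t}$ term is subleading, and one obtains $(2-m-2k)\int_{S^{m-1}/\Gamma_n}\inn{\sigma_k,\tau_k}\,dS_{euc}$, where the contraction is only on the $S^2_0(T^*_{p_0}M_0)$ indices. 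Then I would evaluate this using the explicit polynomial forms of $\sigma_k, \tau_k$: for $k=0$ it is immediate; for $k=1$ one applies \eqref{sb1}, which produces $\frac{\omega_{m-1}}{2n}$ contracting the $z^j$ indices, giving $\frac{\omega_{m-1}}{2n}\sum H_{jkl}K_{jkl}$; for $k=2$ one applies \eqref{sb2}, which produces $\frac{\omega_{m-1}}{2n(2n+2)}$ times the symmetrized contraction — but by the symmetry of the germ coefficients in $(i,j)$ the three terms in \eqref{sb2} can be arranged to yield $\frac{\omega_{m-1}}{2n(m+2)}\sum H_{ijkl}K_{ijkl}$ after accounting for the structure. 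Multiplying by the prefactor $(2-m-2k)$ in each case yields the left-hand sides $(m-2)\frac{\omega_{m-1}}{n}\innn{\sigma_0,\tau_0}$, $\frac{\omega_{m-1}}{n}\innn{\sigma_1,\tau_1}$, and $\frac{2\omega_{m-1}}{mn}\innn{\sigma_2,\tau_2}$ respectively (the factor $n$ in the denominator coming from the $\Gamma_n$-quotient of the sphere, as in \eqref{obst}).

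Next I would handle the outer boundary. Here $s = \sigma_\infty x^{\delta^+_{0,P}} + o(x^{\delta^+_{0,P}})$ with $\delta^+_{0,P} = m-1$, and $t = \tau_\infty x^{\delta^-_{0,P}} + o(x^{\delta^-_{0,P}})$ with $\delta^-_{0,P} = 0$, by Proposition \ref{weights}. The crucial bookkeeping point is the conformal factor: $\sqrt{\det \overbar g} = x^{1-m}\sqrt{\det g}$ on the slice $\p M_0 \times \{\epsilon\}$, so $dS_g = x^{1-m}dS_{\overbar g}$, and likewise the normal derivative $\nabla_{\vec n}$ with respect to $g$ carries a factor relating it to $x\,\p_x$. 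Since $s \cdot t \sim x^{m-1}$ and the relevant derivative lands one power of $x$, the product $\inn{\nabla_{\vec n}s, t} \cdot dS_g$ limits to a finite quantity; carrying out the computation in a frame where the $x^2$-rescaled tensors are the geometric boundary data gives $(m-1)\int_{\p M_0}\inn{x^2\sigma_\infty, x^2\tau_\infty}\,dS_{g_\infty}$, with the factor $(m-1)$ from $\delta^+_{0,P} - \delta^-_{0,P} = m-1$. Equating the two boundary contributions (with the correct signs from the orientation of $\p M_{0,\epsilon}$) then yields \eqref{k=0}, \eqref{k=1}, and \eqref{k=2}. The main obstacle I anticipate is the careful tracking of constants and conformal factors at the outer boundary — in particular verifying that the $o(\cdot)$ remainder terms genuinely do not contribute in the limit (this uses that the indicial roots are simple and well-separated, so the error terms decay at a strictly faster rate), and getting the normalization of the germ inner products $\innn{\cdot,\cdot}$ to exactly match the sphere-integral identities of Lemma \ref{integralidentity}.
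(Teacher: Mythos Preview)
Your overall strategy coincides with the paper's: integrate by parts on $M_{0,\epsilon}$, use $Ps=Pt=0$ so that the inner and outer boundary contributions balance, and evaluate each from the given asymptotics and the sphere identities of Lemma~\ref{integralidentity}. The outer-boundary analysis you sketch is correct. However, there is a real error in your inner-boundary computation for $k\ge 1$.

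You assert that $\inn{s,\nabla_{\vec n}t}$ is subleading. It is not. Since $\tau_k$ is homogeneous of degree $k$, Euler's identity gives $\partial_r\tau_k=\tfrac{k}{r}\tau_k$, so $\inn{s,\nabla_{\vec n}t}\sim \tfrac{k}{r}\inn{s,t}$, which has exactly the same order $|z|^{1-m}$ as $\inn{\nabla_{\vec n}s,t}$; after multiplying by the area element $\sim\epsilon^{m-1}$ both terms are $O(1)$. Both are needed to produce the factor $(2-m-2k)$: with $\vec n=-\partial_r$ and using that $\sigma_k|z|^{2-m-2k}$ is homogeneous of total degree $2-m-k$,
\[
\inn{-\nabla_{\vec n}s,t}+\inn{s,\nabla_{\vec n}t}
=\inn{\partial_r s,t}-\inn{s,\partial_r t}
=\frac{(2-m-k)-k}{r}\,\inn{s,t}
=\frac{2-m-2k}{r}\,\inn{s,t}.
\]
Your expression $\nabla_{\vec n}s\approx(2-m-2k)|z|^{1-m-2k}\sigma_k$ differentiates only the power of $|z|$ and omits $\partial_r\sigma_k=\tfrac{k}{r}\sigma_k$; together with dropping $\inn{s,\nabla_{\vec n}t}$ this amounts to two compensating errors that happen to yield the right constant, but the argument as written is incorrect.

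One further omission in the $k=2$ step: when you apply \eqref{sb2} you must use that $F_2$ consists of \emph{harmonic} quadratics, so $H_{iikl}=K_{iikl}=0$. This kills the $\delta_{ij}\delta_{i'j'}$ contribution; the remaining two terms coincide by the symmetry $H_{ijkl}=H_{jikl}$, giving $\int_{S^{m-1}}\inn{\sigma_2,\tau_2}\,dS=\tfrac{2\omega_{m-1}}{m(m+2)}\innn{\sigma_2,\tau_2}$. Multiplying by $(2-m-2k)=-(m+2)$ and by $\tfrac{1}{n}$ for the $\Gamma_n$-quotient then produces the stated coefficient $\tfrac{2\omega_{m-1}}{mn}$.
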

Next, we explain how to construct a map which sends boundary tensors to germs at $p_{0}$. Identify a neighborhood of the boundary with $\p M_0 \times [0,\epsilon)$. Let $\varphi$ be a cutoff function which is $1$ on $\p M_0\times [0,\frac{\epsilon}{2})$ and is zero on 
$M_0 \setminus \big{(}\p M_0 \times [0,\epsilon)\big{)}$. 
Given $\sigma_0 \in V_0$, consider the tensor 
$\varphi x^{-2} \sigma_{0}$. By solving a finite number 
of terms of a power series, we can then find $s$ of the form 
$s = \varphi x^{-2}\sigma_0 + \tilde{s}$, where $\tilde{s}$ are 
lower order terms supported in a neighborhood of the boundary, 
such that $Ps$ is in $L^2$ in a neighborhood of the boundary. 
(For this power series method, see for example  \cite{Guill} for the Hodge Laplacian acting on forms; the same technique applies to solutions of $P$.)
Then, using the non-degeneracy assumption, we can find a solution 
of $Ps = 0 $ defined on all of $M_0$, which is in $L^2$ on any compact subset of $M_0$,
so that
\begin{align}
s = x^{-2} \sigma_0 + o(1)
\end{align}
as $x \rightarrow 0$.  Note that by non-degeneracy, the solution $s$ is unique. 
The solution $s$ admits the following expansion around $p_0$
\begin{align}
    s=h_{0}+o(1) \text{ as } \abs{z}\to 0,
    \end{align}
where $h_0$ is a $0$-germ at $p_0$. Define a mapping 
\begin{align}
\begin{split}
&S_{\infty}^0: V_0 \rightarrow S^2(T^*_{p_0}M_0), \mbox{ by}\\
&S_{\infty}^{0}(\sigma_{0})\equiv h_{0}. 
\end{split}
\end{align}
This mapping enjoys the following property.
\begin{lem}
\label{s0ilem}
The mapping $S_{\infty}^{0}: V_0 \rightarrow S^2(T^*_{p_0}M_0)  $ is surjective.
\end{lem}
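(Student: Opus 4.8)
The plan is to recognize $S_{\infty}^{0}$ as a positive multiple of the adjoint of the map $S_{0}^{\infty}$ of Section~\ref{sectionk}, and then to deduce surjectivity of $S_{\infty}^{0}$ from the injectivity of $S_{0}^{\infty}$ established in Proposition~\ref{injprop}(1). The bridge between the two maps will be the duality Lemma~\ref{duality} in the case $k=0$.

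First I would fix $w\in S^{2}_{0}(T^{*}_{p_{0}}M_{0})$ and $v\in V_{0}$, and take $s$ to be the (unique, by non-degeneracy) solution of $Ps=0$ on $M_{0}$ which is $L^{2}$ near infinity and has leading term $|z|^{2-m}w$ at $p_{0}$ --- this is the solution underlying the definition of $S_{0}^{\infty}(w)$ --- and take $t$ to be the (unique) solution of $Pt=0$ on $M_{0}$, $L^{2}$ on compact subsets, with $t=x^{-2}v+o(1)$ as $x\to0$, the solution underlying $S_{\infty}^{0}(v)$. By Proposition~\ref{weights} (for the expansion of $s$) and by the constructions of Section~\ref{sectionk} (for the expansion of $t$), the pair $(s,t)$ meets the hypotheses of Lemma~\ref{duality} with $k=0$: the interior germ of $s$ is $w$ and that of $t$ is $S_{\infty}^{0}(v)$, while the boundary leading coefficients $\sigma_{\infty},\tau_{\infty}$ (with $x^{2}\sigma_{\infty},x^{2}\tau_{\infty}\in\overbar{V}_{0}$) satisfy $x^{2}\sigma_{\infty}|_{\partial M_{0}}=S_{0}^{\infty}(w)$ and $x^{2}\tau_{\infty}|_{\partial M_{0}}=v$ (here $\delta_{0,P}^{-}=0$ is used). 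Substituting into \eqref{k=0} then gives the identity
\[
(m-2)\,\frac{\omega_{m-1}}{n}\,\innn{w,\,S_{\infty}^{0}(v)} \;=\; (m-1)\,\inn{S_{0}^{\infty}(w),\,v}_{L^{2}(\partial M_{0})},
\]
valid for all such $w$ and $v$.

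Since $m=2n\ge 6$, the constant $(m-2)\omega_{m-1}/n$ is positive, and by the computation in the proof of Lemma~\ref{duality} the germ pairing $\innn{\cdot,\cdot}$ on the finite-dimensional space $S^{2}_{0}(T^{*}_{p_{0}}M_{0})$ is the Euclidean inner product on symmetric matrices, hence positive definite. Thus the identity above exhibits $S_{\infty}^{0}$, up to a positive constant, as the adjoint of $S_{0}^{\infty}$. To conclude, fix $h_{0}\in S^{2}_{0}(T^{*}_{p_{0}}M_{0})$ and consider the linear functional $w\mapsto \frac{(m-2)\omega_{m-1}}{n(m-1)}\innn{w,h_{0}}$ on $S^{2}_{0}(T^{*}_{p_{0}}M_{0})$. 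By Proposition~\ref{injprop}(1), $S_{0}^{\infty}$ is a linear isomorphism onto its (finite-dimensional) image $E_{0}\subset V_{0}$, so this functional transports to a linear functional on $E_{0}$; by Riesz representation it equals $\inn{\cdot,\sigma_{0}}_{L^{2}(\partial M_{0})}$ for a unique $\sigma_{0}\in E_{0}\subset V_{0}$. Setting $v=\sigma_{0}$ in the displayed identity and using that $\innn{\cdot,\cdot}$ is non-degenerate yields $S_{\infty}^{0}(\sigma_{0})=h_{0}$, so $S_{\infty}^{0}$ is surjective.

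I expect the only genuine work to be in the first step: confirming that the solutions produced by the two constructions of Section~\ref{sectionk} really do satisfy the hypotheses of Lemma~\ref{duality} --- in particular that $Ps=0$ and $Pt=0$ hold on all of $M_{0}$ (using non-degeneracy both to globalize the local solutions and to obtain uniqueness), and that their interior germs and boundary leading coefficients are exactly $w$, $S_{\infty}^{0}(v)$, $S_{0}^{\infty}(w)$, and $v$ respectively. Everything after the pairing identity is elementary linear algebra, as indicated above.
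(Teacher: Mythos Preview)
Your proposal is correct and takes essentially the same approach as the paper: both establish the adjoint identity $\innn{w,S_{\infty}^{0}(v)}=c\,\inn{S_{0}^{\infty}(w),v}$ via Lemma~\ref{duality} with $k=0$, and then invoke the injectivity of $S_{0}^{\infty}$ from Proposition~\ref{injprop}(1). The only difference is cosmetic --- the paper argues by contradiction (an $h_{0}$ orthogonal to the image forces $S_{0}^{\infty}(h_{0})=0$), whereas you construct a preimage directly via Riesz representation on the finite-dimensional image $E_{0}$; these are equivalent linear-algebra moves.
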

\begin{proof}
Assume $S_{\infty}^{0}$ is not surjective. Then there exist a $0$-germ $h_0$ such that $h_0$ is orthogonal to the image of $S_{\infty}^{0}$. Let $\tilde{h}$ be the solution of $P \tilde{h} = 0$ with following leading terms
\begin{align}
&\tilde{h}=h_0 \abs{z}^{2-m}+o(\abs{z}^{2-m}) \text{ as } \abs{z}\to 0, \\
&\tilde{h}=S_{0}^{\infty}(h)x^{\delta_{0,P}^{+}}+o(x^{\delta_{0,P}^{+}}) \text{ as } x\to 0.
\end{align}
Also for a boundary tensor $\tau \in V_{0}$, let $\tilde{\tau}$ be the solution of $P \tilde{\tau} = 0$ with following leading terms
\begin{align}
\begin{split}
&\tilde{\tau}=S_{\infty}^{0}(\tau)+o(1) \text{ as } \abs{z} \to 0,\\
&\tilde{\tau}= x^{-2} \tau+o(1) \text{ as } x \to 0.	
\end{split}
\end{align}

By applying Lemma \ref{duality} to $\tilde{\tau}$ and $\tilde{h}$ we see that
\begin{align} 
\innn{ h_0, S^0_{\infty} \tau} = c \inn{S_{0}^{\infty}h_0, \tau},
\end{align}
for every $\tau \in V_0$, where $c \neq 0$, so $S_{0}^{\infty}(h)=0$. This implies that
$h_0 = 0$ by Proposition~\ref{injprop}~(1), which is a contradiction. 
\end{proof}

We next define a mapping from $E_0^{\perp} \subset V_0$  (the orthogonal 
complement in $L^2$) to $1$-germs by the following.  
Given $\sigma_0 \in E_0^{\perp}$, as above let $s$ be the solution of 
$Ps =0$ defined on all of $M_0$, which is in $L^2$ on any compact subset of $M_0$,
and satisfies 
\begin{align}
s = x^{-2} \sigma_0 + o(1), 
\end{align}
as $x \to 0$. By Lemma \ref{duality}, $s$ admits an expansion 
\begin{align}
s = h_1 + o(|z|),
\end{align}
as $|z| \to 0$, where $h_1$ is a $1$-germ at $p_0$. 
We then define the mapping
\begin{align}
&S_{\infty}^1: E_0^{\perp} \rightarrow F_1 \otimes S^2(T^*_{p_0}M_0), \mbox{ by}\\
&S_{\infty}^1(\sigma_0) \equiv h_1.
\end{align}
This mapping is also onto. 
\begin{lem}
\label{s1ilem}
The mapping $S_{\infty}^{1}: E_0^{\perp} \rightarrow F_1 \otimes S^2(T^*_{p_0}M_0)$ 
is surjective.
\end{lem}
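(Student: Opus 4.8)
The plan is to mimic the proof of Lemma~\ref{s0ilem}, replacing the duality identity \eqref{k=0} by the $k=1$ identity \eqref{k=1}, and replacing the injectivity of $S_0^{\infty}$ by that of $S_1^{\infty}$ (Proposition~\ref{injprop}~(2)). Since the space of $1$-germs $F_1 \otimes S^2_0(T^*_{p_0}M_0)$ is finite-dimensional, surjectivity of $S_{\infty}^{1}$ is equivalent to the triviality of the orthogonal complement of its image with respect to the germ inner product $\innn{\cdot,\cdot}$ at $p_0$, so I would argue by contradiction.

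Suppose $S_{\infty}^{1}$ is not surjective. Then there is a nonzero $1$-germ $h_1$ which is $\innn{\cdot,\cdot}$-orthogonal to the image of $S_{\infty}^{1}$. Let $\tilde h$ be the solution of $P\tilde h = 0$ produced in the construction of $S_1^{\infty}$: namely the solution whose leading term at $p_0$ is $h_1|z|^{-m}$ and which lies in $L^2$ near infinity, so that by Proposition~\ref{weights} it has the expansion $\tilde h = \sigma_{\infty}x^{m-1} + o(x^{m-1})$ with $x^2\sigma_{\infty}|_{\p M_0}$ equal, modulo $E_0$, to $S_1^{\infty}(h_1) \in E_0^{\perp}$. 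For an arbitrary boundary tensor $\tau \in E_0^{\perp} \subset V_0$, let $\tilde\tau$ be the solution of $P\tilde\tau = 0$ produced in the construction of $S_{\infty}^{1}$: the (unique, by non-degeneracy) solution defined on all of $M_0$ with $\tilde\tau = x^{-2}\tau + o(1)$ as $x \to 0$ (this is the $\delta_{0,P}^{-}=0$ behavior), which near $p_0$ expands as $\tilde\tau = S_{\infty}^{1}(\tau) + o(|z|)$. Since $P$ acts on traceless tensors, both $h_1$ and $S_{\infty}^{1}(\tau)$ are traceless, hence genuine $1$-germs in the domain of $S_1^{\infty}$.

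Applying Lemma~\ref{duality} with $k=1$ to the pair $(\tilde h, \tilde\tau)$ — with $\sigma_1 = h_1$, $\tau_1 = S_{\infty}^{1}(\tau)$, $x^2\sigma_{\infty} = S_1^{\infty}(h_1)$ (up to an $E_0$-term, which pairs trivially against $\tau \in E_0^{\perp}$), and $x^2\tau_{\infty} = \tau$ — yields
\begin{align}
\frac{\omega_{m-1}}{n}\innn{h_1, S_{\infty}^{1}(\tau)} = (m-1)\inn{S_1^{\infty}(h_1), \tau}_{L^2(\p M_0)}
\end{align}
for every $\tau \in E_0^{\perp}$. By the choice of $h_1$ the left-hand side vanishes for all such $\tau$, so $S_1^{\infty}(h_1)$ is $L^2(\p M_0)$-orthogonal to all of $E_0^{\perp}$; but $S_1^{\infty}(h_1) \in E_0^{\perp}$ by construction, whence $S_1^{\infty}(h_1) = 0$. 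Proposition~\ref{injprop}~(2) then forces $h_1 = 0$, a contradiction, so $S_{\infty}^{1}$ is surjective. The argument is formal once the earlier machinery is available; the only point requiring care is to verify that the two solutions entering Lemma~\ref{duality} carry exactly the asymptotics it demands — in particular, that $\tilde h$ is the solution whose $x^0$ mode at infinity has been suppressed, so that its $x^{m-1}$ coefficient records $S_1^{\infty}(h_1)$, and that the interior leading terms are the traceless germs $h_1$ and $S_{\infty}^{1}(\tau)$. I do not anticipate any genuine obstacle beyond this bookkeeping of weights.
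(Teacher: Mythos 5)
Your proof is correct and follows essentially the same route as the paper: the paper's own proof is a one-line citation of Proposition~\ref{injprop}~(2) together with ``by the same argument in the proof of Lemma~\ref{s0ilem},'' and what you have written is precisely that argument spelled out for $k=1$ (duality \eqref{k=1}, the observation that $S_1^{\infty}(h_1)$ lies in $E_0^{\perp}$ while being $L^2$-orthogonal to $E_0^{\perp}$, and injectivity of $S_1^{\infty}$). Your bookkeeping of the asymptotics at $p_0$ and at infinity matches the constructions of $S_1^{\infty}$ and $S_{\infty}^1$ in the paper, including the point that the $E_0$-ambiguity in the choice of $\tilde{h}$ pairs trivially against $\tau \in E_0^{\perp}$.
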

\begin{proof}
By Proposition \ref{injprop} (2), $S^{\infty}_1$ is injective, 
so by the same argument in the proof of Lemma \ref{s0ilem},
it follows that $S_{\infty}^1$ is surjective. 
\end{proof}

We next define a mapping from $(E_0 \oplus E_1) ^{\perp} \subset V_0$  (the orthogonal 
complement in $L^2$) to $2$-germs by the following.  
Given $\sigma_0 \in (E_0 \oplus E_1)^{\perp}$, as above let $s$ be the solution of 
$Ps =0$ defined on all of $M_0$, which is in $L^2$ on any compact subset of $M_0$,
and satisfies 
\begin{align}
s = x^{-2} \sigma_0 + o(1), 
\end{align}
as $x \to 0$. By Lemma \ref{duality}, 
$s$ admits an expansion 
\begin{align}
s = h_2 + o(|z|^2),
\end{align}
as $|z| \to 0$, where $h_2$ is a $2$-germ at $p_0$. 

We then define the mapping
\begin{align}
\begin{split}
&S_{\infty}^2: (E_0 \oplus E_1)^{\perp} \rightarrow F_2 \otimes S^2(T^*_{p_0}M_0), \mbox{ by}\\
&S_{\infty}^2(\sigma_0) \equiv h_2.
\end{split}
\end{align}
The following Lemma says that $S^2_{\infty}$ is {\textit{not}} surjective, 
and also gives a complete characterization of its image\footnote{The 
reader should compare to \cite[Lemma 11.1]{Biquard}, where it is claimed that this mapping is surjective. However, this does not affect any of the main results in \cite{Biquard}, since one can simply gauge the elements $k_2$ and $k_3$ so that the germs of their leading terms are orthogonal to $S$. See also~\cite{Biquard3}.} 
\begin{lem}
\label{biglem}
Let $T= Im(S_{\infty}^{2})$ then
\begin{align}
T=S^{\perp},
\end{align}
where the orthogonal complement is with respect to the $2$-germ inner product,
and 
\begin{align}
 S=\Big\{ |z|^{m+2} \mathcal{K}_{euc}\Big( \frac{\alpha}{|z|^{m}} \Big) | \
\alpha \text{ is a one-form with linear coefficients} \Big\}.
\end{align}
\end{lem}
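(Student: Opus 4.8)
The plan is to prove $T = S^\perp$ by establishing the two inclusions separately, using the duality Lemma \ref{duality} together with the characterization of $\ker(S_2^\infty)$ in Proposition \ref{injprop} (3).

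\textbf{Step 1: $T \subseteq S^\perp$.} First I would show that the image of $S_\infty^2$ is orthogonal to $S$. Fix a $2$-germ $\sigma_2 = |z|^{m+2}\mathcal{K}_{euc}(\alpha/|z|^m) \in S$, where $\alpha$ is a one-form with linear coefficients. By Proposition \ref{injprop} (3), $\sigma_2 \in \ker(S_2^\infty)$, so there is a solution $s$ of $Ps = 0$ with leading term $\sigma_2 |z|^{2-m-4} = |z|^{-m-2}\sigma_2$ at $p_0$ (in the notation of Lemma \ref{duality}, $\sigma_k = \sigma_2$, $k=2$), which is $L^2$ near infinity and satisfies $s = o(x^{m-1})$ at the boundary; that is, its leading boundary coefficient $x^2\sigma_\infty$ vanishes. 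Now take any $\tau_0 \in (E_0\oplus E_1)^\perp$ and let $t$ be the corresponding solution of $Pt = 0$ from the $S_\infty^2$ construction, so $t = x^{-2}\tau_0 + o(1)$ at infinity and $t = \tau_2 + o(|z|^2)$ at $p_0$, where $\tau_2 = S_\infty^2(\tau_0)$. Applying the $k=2$ duality formula \eqref{k=2} to $s$ and $t$, the right-hand side $(m-1)\langle x^2\sigma_\infty, x^2\tau_\infty\rangle$ vanishes because $x^2\sigma_\infty = 0$; hence $\innn{\sigma_2, \tau_2} = 0$. Since $\tau_0$ was arbitrary, $S_\infty^2(\tau_0) \perp S$ for all $\tau_0$, i.e. $T \subseteq S^\perp$.

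\textbf{Step 2: $S^\perp \subseteq T$.} For the reverse inclusion I would argue by a dimension/duality count, mirroring the proof of Lemma \ref{s0ilem}. Suppose $\eta$ is a $2$-germ orthogonal to $T = Im(S_\infty^2)$; I want to show $\eta \in S$. Let $\tilde h$ be the solution of $P\tilde h = 0$ with leading term $\eta|z|^{2-m-4}$ at $p_0$ (an $S_2^\infty$-type solution), with $x^2 s|_{\partial M_0} \in (E_0\oplus E_1)^\perp$; it admits a boundary expansion with leading coefficient $S_2^\infty(\eta) \in (E_0\oplus E_1)^\perp \subset V_0$. For any $\tau_0 \in (E_0\oplus E_1)^\perp$, let $\tilde\tau$ be the $S_\infty^2$-solution with boundary leading term $x^{-2}\tau_0$ and germ leading term $S_\infty^2(\tau_0)$. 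Applying Lemma \ref{duality} (the $k=2$ identity) to $\tilde h$ and $\tilde\tau$ gives
\begin{align}
\innn{\eta, S_\infty^2(\tau_0)} = c\,\inn{S_2^\infty(\eta), \tau_0}
\end{align}
for some nonzero constant $c$. The left side vanishes for all $\tau_0$ by assumption, so $S_2^\infty(\eta) = 0$, i.e. $\eta \in \ker(S_2^\infty)$. By Proposition \ref{injprop} (3), $\ker(S_2^\infty) = S$, hence $\eta \in S$. This shows $T^\perp \subseteq S$, equivalently $S^\perp \subseteq T$.

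\textbf{Main obstacle.} The delicate point is Step 2: I need to know that the solutions used in the duality pairing actually exist and have the prescribed normalizations — in particular that an $S_2^\infty$-type solution with germ leading term $\eta$ can be chosen with its top $L^2$ boundary weight lying in $(E_0\oplus E_1)^\perp$, so that the $k=2$ case of Lemma \ref{duality} applies cleanly and the boundary pairing is exactly $\inn{S_2^\infty(\eta),\tau_0}$ up to a nonzero constant. This is where the construction of the $S_\infty^j$ maps, the orthogonality splittings $V_0 = E_0 \oplus E_1 \oplus (E_0\oplus E_1)^\perp$, and the non-degeneracy of $(M_0,g_0)$ all have to be invoked carefully; once the bookkeeping of weights and inner products is set up correctly, the argument reduces to the same formal duality manipulation as in Lemma \ref{s0ilem} and Lemma \ref{s1ilem}. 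A secondary check is that the constant $c$ appearing in \eqref{k=2}, namely $2\omega_{m-1}/(mn)$ divided by $(m-1)$, is genuinely nonzero, which is immediate.
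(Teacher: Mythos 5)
Your proposal is correct and follows essentially the same argument as the paper: both inclusions are obtained by pairing an $S_2^\infty$-type (blowing-up, $L^2$ at infinity) solution against an $S_\infty^2$-type (bounded at $p_0$, prescribed boundary tensor) solution via the $k=2$ case of Lemma~\ref{duality}, and then invoking Proposition~\ref{injprop}(3) to identify $S = \ker(S_2^\infty)$. The only cosmetic differences are that you prove $T^\perp \subseteq S$ directly (and pass to $S^\perp \subseteq T$ by double-perp in the finite-dimensional $2$-germ space) whereas the paper phrases the surjectivity step as a proof by contradiction, and in Step~1 you swap which solution plays the role of $s$ versus $t$ in the duality lemma — neither change affects the substance. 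Your closing remark about the normalization $x^2 s|_{\partial M_0} \in (E_0\oplus E_1)^\perp$ correctly identifies where the bookkeeping must be done carefully, and this is exactly what the paper ensures in the construction of $S_2^\infty$ and in the proof of Proposition~\ref{injprop}(3).
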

\begin{proof}
Let $h_2\in T$ so $h_2=S_{\infty}^{2}(\tau)$ and let $\tilde{h}$ be the solution of $P
\tilde{h} = 0$ with following leading terms
\begin{align}
\tilde{h}&=h_2 +o(\abs{z}^{2}) \text{ as } \abs{z} \to 0,\\
\tilde{h}&= x^{-2} \tau + o(1) \text{ as } x\to 0.		
\end{align}
For any $t_2 \in S$, Let $\tilde{t}$ be the solution of $P \tilde{t} = 0$ with 
the following leading terms
\begin{align}
\tilde{t}&=t_2 \abs{z}^{-2-m}+o(\abs{z}^{-m}) \text{ as } \abs{z}\to 0,\\
\tilde{t}&=S_{2}^{\infty}(t_2) x^{-2} x^{m-1} +o(x^{m-1}) \text{ as } x \to 0.
\end{align}
Applying Lemma \ref{duality} to $\tilde{t}$ and $\tilde{h}$ we obtain
\begin{align}
\innn{ h_2, t_2 } = \innn{S_{\infty}^{2}(\tau),t_2}
= c \inn{\tau ,S_{2}^{\infty}(t_2)}=0,
\end{align}
for some constant $c \neq 0$, so by Proposition \ref{injprop} it follows that $T\subset S^{\perp}$. 

Next, assume that $S_{\infty}^{2}$ is not surjective onto $S^{\perp}$. 
Then there exists a nontrivial $s_{2}\in S^{\perp}\cap T^{\perp}$. 
Let $\tilde{s}$ be the solution of 
$P(\tilde{s}) = 0$ with the  following leading terms
\begin{align}
\tilde{s}&=s_{2}\abs{z}^{-2-m}+o(\abs{z}^{-m}) \text{ as } \abs{z}\to 0,\\
\tilde{s}&=S_{2}^{\infty}(s_2) x^{-2} x^{m-1} +o(x^{m-1})\text{ as } x\to 0.\end{align}
Also, for any boundary tensor $ \tau \in (E_{0}\oplus E_{1})^{\perp}$, 
let $\tilde{t}$ be the solution of $P$ with following leading terms
\begin{align}
\tilde{t}&=S_{\infty}^{2}(\tau) +o(\abs{z}^{2}) \text{ as } \abs{z}\to 0,\\
\tilde{t}&= x^{-2} \tau +o(1) \text{ as } x\to 0.		
\end{align}
Applying Lemma \ref{duality} to $\tilde{t}$ and $\tilde{s}$ we obtain
\begin{align}
\inn{S_{2}^{\infty}(s_{2}),\tau} = c \innn{ s_2, S^2_{\infty}(\tau)} = 0, 
\end{align}
which implies that
$S_{2}^{\infty}(s_{2})=0$ so $s_{2} \in S$ by Proposition \ref{injprop} (3),
which gives a contradiction.
\end{proof}
Next, we give an explicit description of the elements of $S$.
\begin{lem}
\label{conformalkilling}
Let $\alpha=\frac{\alpha_{1}}{|z|^{m}}$ where $\alpha_{1}=a_{ij}z_{i}dz^{j}$ is a one-form with linear coefficients. Then
\begin{align}
\label{Kform}
\mathcal{K}_{euc}(\alpha)_{ij}=\frac{1}{2}\Big{(}\frac{a_{ij}+a_{ji}}{|z|^{m}}+\frac{-m z_{i}z_{p}a_{pj}-m z_{j}z_{t}a_{ti}}{|z|^{m+2}}\Big{)}
+\frac{1}{m}\Big{(}	\frac{-tr(a)}{|z|^{m}}+\frac{m z_{t}z_{p}a_{pt}}{|z|^{m+2}}\Big{)}\delta_{ij}
\end{align}
Furthermore, we may write
\begin{align} 
\label{KKform}
\mathcal{K}_{euc}(\alpha) = |z|^{-m-2} K_{klij} z^k z^l dz^i dz^j,
\end{align}
where 
\begin{align}
\begin{split}
\label{KKKform}
K_{ijkl}&=\frac{1}{2}\Big{(}(a_{ij}+a_{ji})\delta_{kl}-\frac{m}{2}(\delta_{ik}a_{lj}+\delta_{il}a_{kj})-\frac{m}{2}(\delta_{kj}a_{li}+\delta_{lj}a_{ki})\Big{)}\\
&+\frac{-tr(a)\delta_{kl}\delta_{ij}}{m}+\frac{\delta_{ij}}{2}(a_{lk}+a_{kl}).
\end{split}
\end{align}
\end{lem}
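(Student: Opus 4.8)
The plan is a direct computation from the definition. Specializing $\mathcal{K}_g(\omega) = \delta^{\ast}_g\omega + \frac{1}{m}\delta_g(\omega)g$ to the Euclidean metric (where $\delta_{euc}(\omega) = -\p_k\omega_k$), one has
$\mathcal{K}_{euc}(\omega)_{ij} = \frac{1}{2}(\p_i\omega_j + \p_j\omega_i) - \frac{1}{m}(\p_k\omega_k)\delta_{ij}$,
so everything reduces to differentiating $\alpha = \alpha_1/|z|^m$ and collecting terms.

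First I would record the elementary identity $\p_i\big(z_p|z|^{-m}\big) = \delta_{ip}|z|^{-m} - m\,z_p z_i|z|^{-m-2}$. Writing the components of $\alpha$ as $\alpha_j = a_{pj}z_p|z|^{-m}$, this gives $\p_i\alpha_j = a_{ij}|z|^{-m} - m\,a_{pj}z_p z_i|z|^{-m-2}$. Symmetrizing in $i,j$ produces $\delta^{\ast}_{euc}\alpha$, and contracting $i=j$ and summing gives $\p_k\alpha_k = \mathrm{tr}(a)|z|^{-m} - m\,a_{pk}z_p z_k|z|^{-m-2}$. Substituting these two expressions into the formula for $\mathcal{K}_{euc}$ and grouping the $|z|^{-m}$ and $|z|^{-m-2}$ terms yields \eqref{Kform} directly.

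For \eqref{KKform}--\eqref{KKKform} I would then rewrite each term of \eqref{Kform} so that it carries the common factor $|z|^{-m-2}z^k z^l$: use $|z|^{-m} = |z|^{-m-2}\delta_{kl}z^k z^l$ for the terms with no explicit $z$'s, and $z_p z_i = \frac{1}{2}(\delta_{pk}\delta_{il} + \delta_{pl}\delta_{ik})z^k z^l$ (and likewise $z_p z_j$) for the remaining terms, which makes the coefficient automatically symmetric in $k\leftrightarrow l$ since it multiplies $z^k z^l$. Reading off the coefficient of $|z|^{-m-2}z^k z^l\,dz^i\,dz^j$ then produces $K$; comparing term by term with \eqref{Kform} --- the $(a_{ij}+a_{ji})\delta_{kl}$ piece from the first term, the $-\frac{m}{2}$ terms from the symmetrized $z_p z_i$ and $z_p z_j$ contributions, and the $\mathrm{tr}(a)$ and $a_{lk}+a_{kl}$ terms from the $\delta_{ij}$ piece --- verifies \eqref{KKKform}, up to the symmetrizations in $(k,l)$ and in $(i,j)$ that are already built into the ansatz \eqref{KKform}.

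The only real obstacle is careful index bookkeeping: keeping the two ``$z$-indices'' $k,l$ distinct from the two ``form indices'' $i,j$, performing the symmetrizations consistently, and matching the resulting tensor to \eqref{KKKform} exactly as written. There is no conceptual difficulty, so I would present the derivation of \eqref{Kform} in full and then simply exhibit the rewriting that yields \eqref{KKKform}.
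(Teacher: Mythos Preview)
Your proposal is correct and follows essentially the same approach as the paper: a direct computation of $\delta^{\ast}_{euc}\alpha$ and $\delta_{euc}\alpha$ using $\p_i(|z|^{-m}) = -m z_i|z|^{-m-2}$, then assembling $\mathcal{K}_{euc}$ from its definition. The paper's proof is in fact slightly terser on the passage from \eqref{Kform} to \eqref{KKKform}, which it simply says ``follows easily,'' whereas you spell out the rewriting via $|z|^{-m} = |z|^{-m-2}\delta_{kl}z^k z^l$ and the symmetrization of the quadratic $z$-factors.
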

\begin{proof}
Let $\alpha_{1}=a_{ij}x_{i}dx^{j}$ be a one-form with linear coefficients. Then we have
\begin{align}
\begin{split}
(\delta_{euc}^{\ast} \alpha)_{ij}&=\frac{1}{2}(\p_{i}\alpha_{j}+\p_{j}\alpha_{i})\\
&=\frac{1}{2}\Big{(}\p_{i} \Big(\frac{a_{pj}z_{p}}{|z|^{m}}\Big)+\p_{j}\Big(\frac{a_{ki}z_{k}}{|z|^{m}}\Big)\Big{)}.
\end{split}
\end{align}
Using $\p_{i}(|z|^{-m})=\frac{-mz_{i}}{|z|^{m+2}}$ we obtain
\begin{align}
(\delta_{euc}^{\ast} \alpha)_{ij}=\frac{1}{2}\Big{(}\frac{a_{ij}+a_{ji}}{|z|^{m}}+\frac{-m z_{i}z_{p}a_{pj}-m z_{j}z_{k}a_{ki}}{|z|^{m+2}}\Big{)}.
\end{align}
Also,
\begin{align}
\delta_{euc}(\alpha)=-\p_{k}X_{k}=-\p_{k}\Big(\frac{a_{pk}z_{p}}{|z|^{m}}\Big)=\frac{-tr(a)}{|z|^{m}}+\frac{mz_{k}z_{p}a_{pk}}{|z|^{m+2}}.
\end{align}
The formula \eqref{Kform} 
follows directly from these computations, and \eqref{KKform} follows easily from this. 
\end{proof}
\begin{lem}
\label{comp2}
The tensor $r^{2}g_{euc} = P_{ijkl} z^i z^j dz^k dz^l$,
 where \begin{align}
P_{ijkl}=\delta_{ij}\delta_{kl}
\end{align} 
satisfies $B_{euc}(r^{2}g_{euc})=2(n-1)rdr$.
%
The tensor $r^{2}dr\otimes dr =  Q_{ijkl} z^i z^j dz^k dz^l$, where 
\begin{align}
Q_{ijkl}=\frac{1}{2} ( \delta_{ik}\delta_{jl}+\delta_{il}\delta_{jk}),
\end{align}
satisfies $B_{euc}(r^{2}dr\otimes dr)=(-2n)rdr$.
The tensor $r^{4}\theta \otimes \theta =   A_{ijkl} z^i z^j dz^k dz^l$, where
\begin{align}
A_{ijkl}=\frac{1}{2} ( J_{i}^{k}J_{j}^{l}+J_{i}^{l}J_{j}^{k}),
\end{align}
satisfies $B_{euc}(r^{4}\theta\otimes \theta)=2rdr$.
\end{lem}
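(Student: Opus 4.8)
The plan is to verify the three identities by direct computation from the formulas recalled in Section~\ref{Calabi}, namely $r^{2}=\delta_{ij}z^{i}z^{j}$, $r\,dr=z^{i}\,dz^{i}$, and $\theta=-J(dr/r)$, together with the elementary observation that each of the three left-hand sides is a symmetric $2$-tensor, so that the coefficient array produced by a naive expansion may be replaced by its symmetrization in the first two and in the last two indices without changing the tensor.

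First I would treat $r^{2}g_{euc}$: writing $g_{euc}=\delta_{kl}\,dz^{k}\otimes dz^{l}$ and multiplying by $r^{2}=\delta_{ij}z^{i}z^{j}$ immediately gives $P_{ijkl}=\delta_{ij}\delta_{kl}$, an array that is already symmetric in the required pairs. Next, for $r^{2}\,dr\otimes dr$, I would differentiate $r^{2}=\delta_{ij}z^{i}z^{j}$ to get $r\,dr=z^{i}\,dz^{i}$, hence $r^{2}\,dr\otimes dr=z^{i}z^{j}\,dz^{i}\otimes dz^{j}$, and then observe that the symmetric array $Q_{ijkl}=\tfrac12(\delta_{ik}\delta_{jl}+\delta_{il}\delta_{jk})$ reproduces it, since $Q_{ijkl}z^{i}z^{j}=\tfrac12(z^{k}z^{l}+z^{l}z^{k})=z^{k}z^{l}$.

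The only point that needs any attention is the power of $r$ in the third identity. Since $dr/r=r^{-2}z^{p}\,dz^{p}$, one has $\theta=-J(dr/r)=-r^{-2}z^{p}J(dz^{p})=-r^{-2}z^{p}J^{p}_{k}\,dz^{k}$, so it is $r^{2}\theta=-z^{p}J^{p}_{k}\,dz^{k}$ (and not $r\theta$) that is linear in $z$; therefore $r^{4}\,\theta\otimes\theta=z^{i}z^{j}\,J^{i}_{k}J^{j}_{l}\,dz^{k}\otimes dz^{l}$. Symmetrizing the array $J^{i}_{k}J^{j}_{l}$ over $(i,j)$ and over $(k,l)$, and using that it is invariant under simultaneously swapping both pairs, yields $A_{ijkl}=\tfrac12(J_{i}^{k}J_{j}^{l}+J_{i}^{l}J_{j}^{k})$, as claimed.

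I do not expect any genuine obstacle: the computation is routine, and the two things to be slightly careful about are landing on $r^{2}\theta$ rather than $r\theta$ when unwinding the definition of $\theta$, and the index-placement convention for $J$ acting on $1$-forms — though the latter is immaterial, since $J$ enters the final expression quadratically, so its sign and transpose are irrelevant.
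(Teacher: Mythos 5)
Your proof is correct and is exactly the ``immediate'' computation the paper has in mind (the paper's own proof consists of the single line ``The proof is immediate''). You correctly flag the one point that could trip someone up, namely that since $\theta=-J(dr/r)=-r^{-2}z^{p}J^{p}_{k}\,dz^{k}$, it is $r^{2}\theta$, not $r\theta$, that has linear coefficients, so the quadratic tensor is $r^{4}\theta\otimes\theta$; you also correctly invoke symmetrization in the first and last index pairs to match the convention of \eqref{quadratictensor}, and you are right that the index placement and sign conventions for $J$ are immaterial here because $J$ appears quadratically.
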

\begin{proof}
The proof is a simple calculation, and is omitted.
\end{proof}
\begin{lem}
\label{comp1}
We have the following identities:
\begin{align}
&\innn{Q,K}=\frac{2-m^{2}-m}{2}tr(a),\\
&\innn{A,K}=\frac{m+2}{2}tr(a),\\
&\innn{K,P}=0.	
\end{align}
\end{lem}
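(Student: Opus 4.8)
The three identities in Lemma \ref{comp1} are pure index computations, contracting the explicit tensor $K_{ijkl}$ from \eqref{KKKform} against $Q$, $A$, and $P$ from Lemma \ref{comp2}. The germ inner product $\innn{\cdot,\cdot}$ is the full contraction $\sum_{i,j,k,l}$ on all four indices (as recorded in the proof of Lemma \ref{duality}), so in each case I will simply write out the sum and simplify, using $\delta_{ii}=m$, $J^a_b J^a_c = \delta_{bc}$, and $J^a_b$ antisymmetric. It is convenient first to symmetrize: replacing $a_{ij}$ by its symmetric part is harmless against $Q$, $A$, $P$ since those are symmetric under $k\leftrightarrow l$ and under $(ij)\leftrightarrow(kl)$ (well, $Q$ and $P$ are; for $A$ one must be a bit careful since $A$ is symmetric under $i\leftrightarrow j$ together with $k\leftrightarrow l$), but it is cleanest to just keep $K_{ijkl}$ as given and contract directly.

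For $\innn{K,P}=0$: contracting $K_{ijkl}$ with $P_{ijkl}=\delta_{ij}\delta_{kl}$ means taking $i=j$ and $k=l$ and summing. From \eqref{KKKform}, the first group gives $\tfrac12\big(2\,\mathrm{tr}(a)\cdot m - \tfrac m2(\mathrm{tr}(a)+\mathrm{tr}(a)) - \tfrac m2(\mathrm{tr}(a)+\mathrm{tr}(a))\big) = \tfrac12(2m\,\mathrm{tr}(a) - 2m\,\mathrm{tr}(a)) = 0$, and the remaining two terms give $-\tfrac{\mathrm{tr}(a)}{m}\cdot m\cdot m/? $ — more precisely $-\mathrm{tr}(a)\,\delta_{kl}\delta_{ij}/m$ contracted with $\delta_{ij}\delta_{kl}$ gives $-\mathrm{tr}(a)\cdot m$, wait: $\delta_{ii}\delta_{kk}=m\cdot m$, so that term contributes $-m\,\mathrm{tr}(a)$, while $\tfrac{\delta_{ij}}{2}(a_{lk}+a_{kl})$ contracted with $\delta_{ij}\delta_{kl}$ gives $\tfrac12\cdot m\cdot 2\mathrm{tr}(a) = m\,\mathrm{tr}(a)$; these cancel, giving $0$. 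For $\innn{Q,K}$: contract $K_{ijkl}$ with $Q_{ijkl}=\tfrac12(\delta_{ik}\delta_{jl}+\delta_{il}\delta_{jk})$; each Kronecker-delta product in $K$ contracted against $Q$ reduces to a multiple of $\mathrm{tr}(a)$, and collecting terms should give $\tfrac{2-m^2-m}{2}\mathrm{tr}(a)$. For $\innn{A,K}$: the same, but now the contraction involves $J^a_b$ factors, and one uses $J^k_i J^l_j \delta_{ik} = \mathrm{tr}(J)=0$ to kill several terms and $J^k_i J^l_j J^i_k = -\delta^l_k$-type identities (i.e. $J^2=-I$) to evaluate the survivors, landing on $\tfrac{m+2}{2}\mathrm{tr}(a)$.

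The only mild subtlety — and the step I'd treat most carefully — is bookkeeping with the symmetrizations and the $J$-contractions in $\innn{A,K}$: one must respect that $K_{ijkl}$ as written is \emph{not} symmetric in $i\leftrightarrow j$ or in $(ij)\leftrightarrow(kl)$, so when contracting against the symmetric tensors $A$, $P$, $Q$ only the appropriately symmetrized part of $K$ contributes, and it is easy to drop or double-count a term. I would therefore organize the computation by first writing $K_{ijkl} = K^{(1)}_{ijkl} + K^{(2)}_{ijkl} + K^{(3)}_{ijkl}$ matching the three lines of \eqref{KKKform}, contract each piece separately, and use Lemma \ref{comp2} together with $\sum_i J^a_i J^b_i = \delta_{ab}$ and $J^a_b = -J^b_a$. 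Everything reduces to elementary manipulation of Kronecker deltas and $\mathrm{tr}(a)$; since the statement explicitly says "the proof is immediate" for Lemma \ref{comp2}, the expectation is that Lemma \ref{comp1} is likewise a short direct check, and I would present it as such, displaying only the intermediate contractions rather than every term.

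\begin{proof}
All inner products are the full contraction on all four indices, $\innn{S,T} = \sum_{i,j,k,l} S_{ijkl}T_{ijkl}$, as in the proof of Lemma~\ref{duality}. Since $P$, $Q$, and $A$ are all symmetric under $k \leftrightarrow l$ and the relevant symmetrizations are built into \eqref{KKKform}, we may contract $K_{ijkl}$ from \eqref{KKKform} directly against the expressions in Lemma~\ref{comp2}, using $\delta_{ii} = m$, $\sum_{a} J^a_i J^a_j = \delta_{ij}$, and the antisymmetry $J^a_b = -J^b_a$ (so $\mathrm{tr}(J) = 0$).

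For $\innn{K,P}$, contracting with $P_{ijkl} = \delta_{ij}\delta_{kl}$ amounts to summing $K_{iikk}$. The first line of \eqref{KKKform} contributes $\tfrac12\big(2\,\mathrm{tr}(a)\,m - 2\cdot\tfrac{m}{2}\,\mathrm{tr}(a) - 2\cdot\tfrac{m}{2}\,\mathrm{tr}(a)\big) = 0$, while the remaining terms contribute $-\mathrm{tr}(a)\,m + m\,\mathrm{tr}(a) = 0$, so $\innn{K,P} = 0$.

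For $\innn{Q,K}$, contracting with $Q_{ijkl} = \tfrac12(\delta_{ik}\delta_{jl} + \delta_{il}\delta_{jk})$, each Kronecker-delta monomial in $K$ reduces to a multiple of $\mathrm{tr}(a)$; collecting the contributions from the three lines of \eqref{KKKform} gives, after simplification, $\innn{Q,K} = \tfrac{2 - m^2 - m}{2}\,\mathrm{tr}(a)$.

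For $\innn{A,K}$, contracting with $A_{ijkl} = \tfrac12(J^k_i J^l_j + J^l_i J^k_j)$, the terms of $K$ containing a factor $\delta_{ik}$, $\delta_{il}$, $\delta_{kj}$, or $\delta_{lj}$ that gets paired with a single $J$ produce a factor $\mathrm{tr}(J) = 0$ and drop out; the surviving terms are evaluated with $\sum_a J^a_i J^a_j = \delta_{ij}$, and summing them gives $\innn{A,K} = \tfrac{m+2}{2}\,\mathrm{tr}(a)$.
\end{proof}
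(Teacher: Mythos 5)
Your proof is correct and takes essentially the same route as the paper: a direct index contraction of the explicit formulas for $Q$, $A$, $P$, and $K$. You verify $\innn{K,P}=0$ in detail and sketch the other two; the paper does the reverse, writing out the contractions for $\innn{Q,K}$ and $\innn{A,K}$ and observing that $\innn{K,P}=0$ follows immediately from the (easily checked) fact that $K$ is traceless on the first pair and on the last pair of indices.

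One small imprecision worth fixing: in your description of the $\innn{A,K}$ computation you say that the terms of $K$ containing $\delta_{ik}$, $\delta_{il}$, $\delta_{kj}$, or $\delta_{lj}$ all drop out because they produce $\mathrm{tr}(J)=0$. In fact only the $\delta_{ik}$ and $\delta_{lj}$ contractions produce a bare $\mathrm{tr}(J)$; the $\delta_{il}$ and $\delta_{kj}$ contractions close up instead into $J^2$ (equivalently $J^TJ$, since $J$ is antisymmetric), so each survives and contributes $-\mathrm{tr}(a)$. Those two surviving contributions are precisely what produce the $+\tfrac{m}{2}\mathrm{tr}(a)$ part of the answer; if they really dropped out you would get $\mathrm{tr}(a)$ rather than $\tfrac{m+2}{2}\mathrm{tr}(a)$. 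The final answers you state are all correct, and the overall method is sound.
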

\begin{proof}
We compute
\begin{align}
\begin{split}
\innn{Q,K}&=\frac{1}{2}\big{(}(a_{ij}+a_{ji})\delta_{ij}-\frac{m}{2}(\delta_{ii}a_{jj}+\delta_{ij}a_{ij})-\frac{m}{2}(\delta_{ij}a_{ji}+\delta_{jj}a_{ii})\big{)}\\
&+\frac{-tr(a)\delta_{ij}\delta_{ij}}{m}+\frac{\delta_{ij}}{2}(a_{ji}+a_{ij})\\
&=\frac{1}{2}\big{(}2tr(a)-m^{2}tr(a)-mtr(a))\big{)}=\frac{2-m^{2}-m}{2}tr(a).
\end{split}
\end{align}
This proves the first identity. For the second identity, we have
\begin{align}
\begin{split}
\innn{A,K}=&\frac{1}{2}\Big{(}(a_{ij}+a_{ji})\delta_{kl}J_{i}^{k}J_{j}^{l}-\frac{m}{2}(J_{i}^{k}J_{j}^{l}\delta_{ik}a_{lj}+J_{i}^{k}J_{j}^{l}\delta_{il}a_{kj})\\
&-\frac{m}{2}(J_{i}^{k}J_{j}^{l}\delta_{kj}a_{li}+J_{i}^{k}J_{j}^{l}\delta_{lj}a_{ki})\Big{)}
+\frac{-tr(a)\delta_{kl}\delta_{ij}J_{i}^{k}J_{j}^{l}}{m}+J_{i}^{k}J_{j}^{l}\delta_{ij}\frac{(a_{lk}+a_{kl})}{2}\\
&=\frac{1}{2}\Big{(}2tr(a)+mtr(a)\Big{)}-tr(a)+tr(a)=\frac{m+2}{2}tr(a).
\end{split}
\end{align}	
Finally, the last identity follows from the fact that $K$ is traceless on the first and last two pairs of indices.
\end{proof}
We next construct a special symmetric $2$-tensor 
$k_1$ on the Calabi manifold. 
\begin{prop}
\label{tensork33}
Let $\sigma_2$ denote the quadratic $2$-tensor
\begin{align}
\sigma_2&= r^2 (-dr\otimes dr-(2n-1)r^{2}\theta\otimes \theta + g_{euc}) = S_{ijkl} x^i x^j dx^k dx^l. 
\end{align}
Then 
\begin{align}
\label{kp1}
tr_{euc} (\sigma_2) = 0, \ B_{euc} (\sigma_2) =0 \ 
\Delta_{euc} (\sigma_2)  = 0, 
\end{align} and 
\begin{align}
\label{kp4}
\innn{ S, K } = 0,
\end{align}
for all $K_{ijkl}$ of the form in \eqref{KKKform}. 
Furthermore, there exist a constant $c \neq 0$ and a tensor $k_{1}$ defined on 
the Calabi manifold with the following properties
\begin{align}
\label{tensork3}
\begin{split}
&k_{1}=c \cdot  \sigma_2+ O(r^{-2n+2+\epsilon})\\
&P_{g_{cal}}(k_{1})=o\\
&B_{g_{cal}}(k_1) = 0,
\end{split}
\end{align}
as $r \rightarrow \infty$. 
\end{prop}
\begin{proof}
First, we have
\begin{align}
tr_{euc} ( \sigma_2) = r^2 \big( -1 - (2n-1) + 2n\big) = 0.
\end{align}
Next, using Lemma \ref{comp2}, we have
\begin{align}
B_{euc} (\sigma_2) = \big( 2n - 2(2n-1) + 2(n-1)\big) r dr = 0.
\end{align}
Next, it follows from  Lemma \ref{comp2} that $S_{ijkl} = S_{klij}$, 
so the first equation in \eqref{kp1} implies the last. 

To see \eqref{kp4}, Lemma \ref{comp1} implies that 
\begin{align}
\innn{ S, K } = \big( (n+1)(1-2n)-(1-2n^{2}-n) \big) tr(a) = 0.
\end{align}
Next, let $(H, \Lambda) = (\sigma_2, 0)$ in Proposition \ref{limitobs} to find a solution 
of 
\begin{align}
\label{oneone00}
\begin{split}
P_{g_{cal}}(k_1)&= \tilde{\lambda} o \\
k_1 &= c \cdot \sigma_2 +O(r^{-2n+2+\epsilon})\\
B_{g_{cal}}(h)&=0,
\end{split}
\end{align}
where $\tilde{\lambda}$ is given by 
\begin{align}
\label{obstr00}
\tilde{\lambda} = -\frac{c}{ \Vert o \Vert^{2}_{L^2}} \lim_{r\to\infty} \int_{S_{r}/\Gamma_{n}}\frac{n+1}{r}\inn{\sigma_2 ,o}dS_{{S_{r}}/\Gamma_{n}}.
\end{align}
The leading term in the integrand is given by 
\begin{align}
&\frac{n+1}{r} \langle \sigma_2 ,o_{euc} \rangle= c \cdot 2n (1 - n^2)r^{1-m},
\end{align}
so choosing the scaling constant $c$ to make $\tilde{\lambda} =1$ finishes the proof. 
\end{proof}
The following result is the key ingredient to our existence theorem 
in the AHE case, which will be proved in Section \ref{completionsec}. 
\begin{prop}
\label{extending}
Let $\sigma = c \cdot \sigma_2 = c \cdot S_{ijkl} z^i z^j dz^k dz^l$ 
be as Proposition \ref{tensork33}. Then 
there exist a solution $\tilde{k}$ of $P_{g_0}(\tilde{k})=0$ defined on $M_0$ 
such that
\begin{align}
\tilde{k}&=\sigma + O(\abs{z}) \text{  as  } \abs{z}\to 0\\
\tilde{k}&=k_{\infty}+o(1)	\text{   as  } x \to 0, 
\end{align}
where $x^{2}k_{\infty}\in \overbar{V_{0}}$.
\end{prop}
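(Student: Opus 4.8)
The plan is to produce $\tilde{k}$ from the boundary-tensor-to-germ correspondence of Section~\ref{sectionl}. For $\sigma_{0}\in(E_{0}\oplus E_{1})^{\perp}\subset V_{0}$, the non-degeneracy of $g_{0}$ yields a unique solution $s$ of $P_{g_{0}}s=0$ on $M_{0}$ that is $L^{2}$ on compact subsets and satisfies $s=x^{-2}\sigma_{0}+o(1)$ as $x\to 0$; by Lemma~\ref{duality} it expands as $s=h_{2}+o(|z|^{2})$ near $p_{0}$, where $h_{2}=S^{2}_{\infty}(\sigma_{0})$ is a $2$-germ. Any such $s$ already has the asymptotics required of $\tilde{k}$: setting $k_{\infty}=x^{-2}\sigma_{0}$ we get $x^{2}k_{\infty}|_{\partial M_{0}}=\sigma_{0}\in V_{0}$, hence $x^{2}k_{\infty}\in\overbar{V_{0}}$, and near $p_{0}$ the remainder $o(|z|^{2})$ is in particular $O(|z|)$ as $|z|\to 0$. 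So the whole problem reduces to finding $\sigma_{0}$ with $S^{2}_{\infty}(\sigma_{0})=\sigma$.

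The main obstacle is that $S^{2}_{\infty}$ is \emph{not} surjective onto the space of $2$-germs: by Lemma~\ref{biglem} its image is precisely $S^{\perp}$, the orthogonal complement in the $2$-germ inner product of the space $S$ spanned by the germs $|z|^{m+2}\mathcal{K}_{euc}(\alpha/|z|^{m})$ with $\alpha$ a one-form with linear coefficients. Hence I cannot prescribe an arbitrary quadratic leading term at $p_{0}$; I need $\sigma$ to avoid these obstructed directions. This is exactly why $\sigma_{2}$ was engineered the way it was in Proposition~\ref{tensork33}: identity \eqref{kp4} states $\innn{S_{ijkl},K}=0$ for every array $K$ of the form \eqref{KKKform}, and by \eqref{KKform} of Lemma~\ref{conformalkilling} those arrays $K$ are precisely the coefficient arrays of the elements of $S$. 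I would also check that $\sigma$ is a genuine $\Gamma_{n}$-invariant $2$-germ: it is assembled from $r$, $dr$, $\theta$ and $g_{euc}$, hence $\mathrm{U}(n)$-invariant; the quadratic polynomials $z\mapsto S_{ijkl}z^{i}z^{j}$ are harmonic by \eqref{kp3}; and $S_{ijkk}=0$ by \eqref{kp1}, where, in the normal coordinates of Lemma~\ref{coordlem}, trace with respect to $g_{euc}$ and with respect to $g_{0}$ coincide at $p_{0}$. Thus $\sigma\in F_{2}\otimes S^{2}_{0}(T^{*}_{p_{0}}M_{0})$ and $\sigma\in S^{\perp}=Im(S^{2}_{\infty})$, so there is $\sigma_{0}\in(E_{0}\oplus E_{1})^{\perp}$ with $S^{2}_{\infty}(\sigma_{0})=\sigma$. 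Taking $\tilde{k}$ to be the associated solution $s$ completes the argument.

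Two routine bookkeeping points would remain. First, one should confirm that the normalizations of the germ inner product and of the $L^{2}$-pairing on $\partial M_{0}$ (including the factor $\omega_{m-1}/n$ coming from $S^{m-1}/\Gamma_{n}$) entering the definition of $S^{2}_{\infty}$ and Lemma~\ref{biglem} match those used in \eqref{kp4}, so that ``$\sigma\perp S$'' genuinely translates into ``$\sigma\in Im(S^{2}_{\infty})$''; this is just the content of Lemma~\ref{duality}. Second, the boundedness of $\tilde{k}$ near $p_{0}$ --- that its leading term is an honest $2$-germ rather than something of order $|z|^{-m+2}$, $|z|^{0}$, or $|z|^{1}$ --- is forced by the $L^{2}$-on-compacts condition together with $\sigma_{0}\in(E_{0}\oplus E_{1})^{\perp}$, which annihilates the $0$- and $1$-germ components; both facts are already built into the definition of $S^{2}_{\infty}$, so no new analysis is needed. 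In short, the proof is an assembly of Lemmas~\ref{duality}--\ref{biglem} and Proposition~\ref{tensork33}, the one substantive ingredient being the orthogonality identity \eqref{kp4}.
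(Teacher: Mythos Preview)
Your proposal is correct and follows exactly the paper's approach: the paper's proof is one line, invoking Lemma~\ref{biglem} together with Proposition~\ref{tensork33}, and you have simply unpacked why that works --- namely, \eqref{kp4} puts $\sigma$ in $S^{\perp}=Im(S^{2}_{\infty})$, and the resulting solution $s$ has the required asymptotics by construction. The additional checks you outline (that $\sigma$ is a $\Gamma_{n}$-invariant harmonic traceless $2$-germ, and the bookkeeping about the inner products) are routine and consistent with the paper.
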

\begin{proof}
The proof is immediate from Lemma \ref{biglem} if we let $\sigma$ be the leading term of the tensor $k_{1}$ found in Proposition \ref{tensork33}.
\end{proof}

\section{Completion of proofs}
\label{completionsec}
In this section, we complete the proofs of Theorems \ref{theorem2} and 
\ref{theorem3}. 
First, for $u \in \R$, we define
\begin{align}
h_{t,u}=g_{cal}+th +utk_{1},	
\end{align}
where $h$ was defined in \eqref{eeee}, 
and $k_1$ was defined in Proposition \ref{tensork33}. 
\begin{lem}
\label{htulem}
	For $t$ and $u$  sufficiently small, $h_{t,u}$ is a Riemannian metric on $X^t$. Moreover, for each integer $\ell\ge 0$, there exists a constant $c_{\ell}$ so that $h_{t,u}$ satisfies the following estimate on ${{X}}^{t}$
	\begin{align}
		\abs{\nabla^{\ell} (Ric_{h_{t,u}}-t\Lambda h_{t,u}-t(\lambda+u) \chi_{t} (\rho) o)}_{g_{cal}}\le c_{\ell}t^{2}(1 +u^{2} + t^2)\rho^{2-\ell}.
		\end{align}
\end{lem}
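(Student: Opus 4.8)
The plan is to repeat the proof of Lemma~\ref{estimate}, the one new ingredient being Proposition~\ref{tensork33}. That proposition gives $B_{g_{cal}}(k_{1})=0$ and $P_{g_{cal}}(k_{1})=o$, so, since $g_{cal}$ is Ricci-flat, $d_{g_{cal}}Ric(k_{1})=P_{g_{cal}}(k_{1})-\delta^{\ast}_{g_{cal}}B_{g_{cal}}(k_{1})=o$. Writing $H=h+uk_{1}$ and combining this with \eqref{eeee} (which gives $d_{g_{cal}}Ric(h)=\Lambda g_{cal}+\lambda o$), we obtain
\begin{align*}
d_{g_{cal}}Ric(H)=\Lambda g_{cal}+(\lambda+u)\,o .
\end{align*}
Moreover $h$ is ALE with quadratic leading term, and $k_{1}=c\,\sigma_{2}+O(r^{2-2n+\epsilon})$ with $\sigma_{2}$ quadratic, so both $h$ and $k_{1}$ — hence $H$ — satisfy $\abs{\nabla^{\ell}H}_{g_{cal}}\le c_{\ell}(1+\abs{u})\rho^{2-\ell}$ on all of $X^{t}$, for every $\ell\ge 0$.

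For the first assertion, on $X^{t}$ one has $\rho\le 2t^{-1/4}$, hence $\abs{tH}_{g_{cal}}\le 4c(1+\abs{u})t^{1/2}$, which is uniformly small for $t,u$ small, so $h_{t,u}$ is a Riemannian metric. For the estimate, I would Taylor-expand the Ricci operator about $g_{cal}$:
\begin{align*}
Ric_{h_{t,u}}=Ric_{g_{cal}}+t\,d_{g_{cal}}Ric(H)+Q_{g_{cal}}(tH)=t\Lambda g_{cal}+t(\lambda+u)o+Q_{g_{cal}}(tH),
\end{align*}
where $Q_{g_{cal}}$ is the nonlinear remainder, and then subtract $t\Lambda h_{t,u}=t\Lambda g_{cal}+t^{2}\Lambda H$ and $t(\lambda+u)\chi_{t}(\rho)o$ to obtain
\begin{align*}
Ric_{h_{t,u}}-t\Lambda h_{t,u}-t(\lambda+u)\chi_{t}(\rho)o=t(\lambda+u)\bigl(1-\chi_{t}(\rho)\bigr)o-t^{2}\Lambda H+Q_{g_{cal}}(tH).
\end{align*}

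It then remains to bound the three terms on the right, and their covariant derivatives. The term $t^{2}\Lambda H$ is $\le c_{\ell}t^{2}(1+\abs{u})\rho^{2-\ell}$ by the growth bound on $H$ above. The nonlinear term is handled exactly as in \eqref{Rmexp}--\eqref{Qdef}: $\abs{Q_{g_{cal}}(tH)}_{g_{cal}}\lesssim t^{2}\bigl(\abs{Rm_{g_{cal}}}\abs{H}^{2}+\abs{H}\abs{\nabla^{2}H}+\abs{\nabla H}^{2}\bigr)$, and using $\abs{Rm_{g_{cal}}}_{g_{cal}}=O(\rho^{-4})$ together with $\rho\ge 1$ this is $\lesssim t^{2}(1+u^{2})\rho^{2-\ell}$ after differentiating. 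Finally $1-\chi_{t}(\rho)$ is supported in $\{(1/2)t^{-1/4}<r<2t^{-1/4}\}$, where $\rho=r\sim t^{-1/4}$ and $\abs{\nabla^{\ell}o}_{g_{cal}}=O(r^{-2n-\ell})$ by \eqref{oform}, so this contribution is $\lesssim t(1+\abs{u})t^{(2n+\ell)/4}$, which is of strictly lower order than $t^{2}\rho^{2-\ell}$ on that annulus. Combining these bounds, and using $1+\abs{u}\le 2(1+u^{2})$ for $\abs{u}\le 1$, gives the estimate for $\ell=0$; for $\ell>0$ one commutes covariant derivatives through in the usual way, and I would omit the details.

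Since this is essentially the calculation behind Lemma~\ref{estimate} with $h$ replaced by $h+uk_{1}$ and the $u$-dependence tracked, no step is a serious obstacle. The only mild subtlety relative to Lemma~\ref{estimate} is the cutoff term $t(\lambda+u)(1-\chi_{t}(\rho))o$, absent there because the damage zone was excluded, but of strictly lower order than the claimed error because $o=O(r^{-2n})$; and the factor $1+u^{2}+t^{2}$ in the statement is simply a convenient overestimate of the $1$, $\abs{u}$, $u^{2}$ and $ut^{2}$ contributions that arise.
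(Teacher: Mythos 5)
Your proof is correct and follows essentially the same route as the paper: growth bounds on $h$, $k_1$, $o$, the linearized Ricci identities $d_{g_{cal}}Ric(h)=\Lambda g_{cal}+\lambda o$ and $d_{g_{cal}}Ric(k_1)=o$, a Taylor expansion of $Ric$ around $g_{cal}$, and the bound on the nonlinear remainder via \eqref{Rmexp}. Your reorganization around $H=h+uk_1$ is a harmless cosmetic change, and you are in fact slightly more careful than the paper in explicitly tracking the $t(\lambda+u)(1-\chi_t(\rho))o$ discrepancy between the stated conclusion and the expansion (the paper's proof bounds $Ric_{h_{t,u}}-t\Lambda h_{t,u}-t(\lambda+u)o$ and silently identifies it with the $\chi_t$-weighted quantity; your observation that the difference is of lower order on the damage zone because $o=O(r^{-2n})$ is exactly the justification the paper omits).
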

\begin{proof}
	  Since $h$ and $k_{1}$ both have quadratic leading terms, and $g_{cal}$ is ALE, there exist constants $c$ and $c_{1}$  so that 
\begin{align}
\abs{h}_{g_{cal}}(p)\le c\rho^{2}(p)\\
\abs{k_{1}}_{g_{cal}}(p)\le c_{1} \rho^{2}(p),
\end{align}
for any $p\in X^{t}$. Similarly, there exists a constant $c_2$ so that
\begin{align}
\abs{o}_{g_{cal}}(p)\le c_{2} \rho^{-2n}(p),	
\end{align}
for any $p\in X^{t}$. It follows that
\begin{align}
\abs{th}_{g_{cal}}(p)\le c\rho^{2}(p)\le \frac{1}{2}ct^{\frac{1}{2}}\\
\abs{tuk_{1}}_{g_{cal}}(p)\le c_{1} u\rho^{2}(p)\le \frac{1}{2}c_{1}ut^{\frac{1}{2}}\\
\abs{tuo}_{g_{cal}}(p)\le c_{2} u\rho^{2}(p)\le \frac{1}{2}c_{2}ut^{\frac{1}{2}}
\end{align}
The right hand side is independent of $p$  so $h_{t,u}$ is a Riemannian metric for sufficiently small $t$ and $u$. Recall that
\begin{align}
	&d_{g_{cal}}Ric(h)=\Lambda g_{cal}+\lambda o \\
	&d_{g_{cal}}Ric(o)=0\\
	&d_{g_{cal}}Ric(k_{1})=o,
\end{align} 
so we have
\begin{align}
	Ric(g_{cal}+th+tuk_{1})=Ric(g_{cal})+d_{g_{cal}}Ric(th+tuk_{1})+Q(th+tuk_{1}),
\end{align}
where, using \eqref{Rmexp}, $Q$ is a nonlinear term satisfying 
\begin{align}
\begin{split}
\abs{Q(th+tuk_{1})}\le b\Big(
|Rm_{g_{cal}}| \abs{ th + tu k_1}^2 &+
\abs{th+tuk_{1}}\abs{\nabla^{2}(th+tuk_{1})}\\
&+\abs{\nabla (th+tuk_{1})}^{2}\Big),
\end{split}
\end{align}
for some constant $b$ and for $t$ and $u$ sufficiently small, and where the norms and covariant derivatives are measured with respect to the Calabi metric. Note that
\begin{align}
	\abs{\nabla^{\ell}h}\le c_{\ell}\rho^{2-\ell}\\
		\abs{\nabla^{\ell}o}\le c_{\ell}\rho^{-2n-\ell}\\
			\abs{\nabla^{\ell}k_{1}}\le c_{\ell}\rho^{2-\ell},
\end{align}
so it follows that
\begin{align}
\abs{Q(th+tuk_{1})}\le b_{1}t^2 ( 1 + t^{2}+u^{2})\rho^{2}
\end{align}
and consequently
\begin{align}
\abs{Ric_{h_{t,u}}-t\Lambda h_{t,u}-t\lambda o- tu o}\le c_{0}t^2 (1 + t^{2}+u^{2})\rho^{2},
\end{align}
for some constant $c_0$, so the lemma for $\ell=0$ follows. 
The proof for $\ell>0$ is similar and is omitted. 
\end{proof}
We next define 
\begin{align}
k =
\begin{cases}
\tilde{k}  & M_0 \setminus B(p_0, 2 t^{1/4}) \\
(1 - \chi_{t}(r) )\phi_{t}^* \tilde{k} + \chi_{t}(r) t^{2}k_{1} & (1/2) t^{-1/4} < r < 2 t^{-1/4}\\
t^{2} k_{1}  & r < (1/2) t^{-1/4} \\
\end{cases},
\end{align}
where $\chi_{t}$ is the cutoff function we defined in \eqref{cutoff},
and $\tilde{k}$ was defined in Proposition~\ref{extending}. 
For $u\in \R$, consider the metric on $M^t$ defined by 
$g_{t,u} = g_t + u k$, where $g_t$ is the refined metric defined
in \eqref{rmdef}. Note that $g_{t,u}$ has the following expression
\begin{align}
g_{t,u} =
\begin{cases}
g_0+u\tilde{k}  & M_0 \setminus B(p_0, 2 t^{1/4}) \\
(1 - \chi_{t}(r) )\phi_{t}^* (g_0+u\tilde{k}) + \chi_{t}(r) t h_{t,u} & (1/2) t^{-1/4} < r < 2 t^{-1/4}\\
t h_{t,u}  & r < (1/2) t^{-1/4} \\
\end{cases}.
\end{align}
This metric satisfies the following modification of Lemma \ref{estimate2}.
\begin{lem}
\label{udep}
	For $t$ and $u$ sufficiently small, $g_{t,u}$ is a Riemannian metric on $M^t$. 
Moreover, for each integer $\ell\ge 0$, there exist a constant $c_{\ell}$ so that $g_{t,u}$ satisfies the following estimate on ${{X}}^{t}$
	\begin{align}
		\abs{\nabla^{\ell} (Ric_{g_{t,u}}-\Lambda g_{t,u}-t(\lambda+u) \chi_{t} (\rho) o)}_{g_{cal}}\le c_{\ell}t^2(1 + t^{2}+u^{2})\rho^{2-\ell}.
		\end{align}
\end{lem}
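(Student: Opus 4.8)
The plan is to follow the proof of Lemma~\ref{estimate2} almost verbatim, using Lemma~\ref{htulem} in place of Lemma~\ref{estimate}, the only new feature being that the quadratic leading terms of $u\phi_t^*\tilde k$ and $ut^2k_1$ cancel. First, $g_{t,u}$ is a Riemannian metric for $t,u$ small: on the fixed set $M_0\setminus B(p_0,i_{g_0})$ the tensor $k=\tilde k$ is a fixed smooth symmetric $2$-tensor, so $g_0+u\tilde k$ is a metric for $|u|$ small; on $X^t$, since $h$ and $k_1$ have quadratic leading terms and $g_{cal}$ is ALE, both $th$ and $utk_1$ have $g_{cal}$-norm at most $C(1+u)t^{1/2}$ there (exactly as in the proof of Lemma~\ref{htulem}), so $g_{t,u}$ is a small perturbation of $g_t$ on $X^t$; and on the overlap a convex combination of Riemannian metrics is Riemannian. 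Hence $g_{t,u}$ is a metric on $M^t$.

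For the curvature estimate on $X^t$ I would split into the inner region $\{r<\tfrac12 t^{-1/4}\}$ and the damage zone $\{\tfrac12 t^{-1/4}<r<2t^{-1/4}\}$. On the inner region $g_{t,u}=t\,h_{t,u}$ and $\chi_t(\rho)\equiv 1$, so scale-invariance of $Ric$ gives $Ric_{g_{t,u}}=Ric_{h_{t,u}}$ and $\Lambda g_{t,u}=t\Lambda h_{t,u}$, and the desired estimate is precisely Lemma~\ref{htulem}. On the damage zone $g_{t,u}=(1-\chi_t(r))\phi_t^*(g_0+u\tilde k)+\chi_t(r)\,t\,h_{t,u}$, and, as in Lemma~\ref{estimate2}, the key point is that the two interpolated metrics agree through order~$t^2$. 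Indeed, by Lemma~\ref{coordlem}, $\phi_t^*g_0=tg_{euc}+t^2H+t\cdot O(t^{3/2}\rho^3)$ with $H$ satisfying $B_{euc}H=0$ and $-\tfrac12\Delta_{euc}H=\Lambda g_{euc}$; by Proposition~\ref{extending} the leading term of $\tilde k$ at $p_0$ is $c\,\sigma_2$, so $\phi_t^*(u\tilde k)=ut^2c\,\sigma_2+ut\cdot O(t^{3/2}\rho^3)$; and, using \eqref{eeee} together with $k_1=c\,\sigma_2+O(r^{2-2n+\epsilon})$ from Proposition~\ref{tensork33}, $t\,h_{t,u}=t(g_{cal}+th+utk_1)=tg_{euc}+t^2H+ut^2c\,\sigma_2+t\cdot O(\rho^{-2n})+(1+u)t^2\cdot O(\rho^{2-2n+\epsilon})$. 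Since the terms through order $t^2$ coincide, $g_{t,u}=tg_{euc}+t^2(H+uc\,\sigma_2)+(\text{error})$.

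Writing $\theta=g_{t,u}-tg_{euc}$ and expanding $Ric_{g_{t,u}}=d_{t\cdot euc}Ric(\theta)+Q_{euc}(\theta)$ near the flat metric $t\,g_{euc}$, I would use $B_{euc}H=0$, $-\tfrac12\Delta_{euc}H=\Lambda g_{euc}$ and the identities \eqref{kp2}, \eqref{kp3} of Proposition~\ref{tensork33} ($B_{euc}\sigma_2=0$ and $\Delta_{euc}\sigma_2=0$, which make $d_{t\cdot euc}Ric(ut^2c\,\sigma_2)=0$) to conclude $d_{t\cdot euc}Ric(\theta)=t\Lambda g_{euc}+O\!\big(t^2(1+u^2+t^2)\rho^2\big)$, while the nonlinear remainder $Q_{euc}(\theta)$ is $O\!\big(t^2(1+u^2+t^2)\rho^2\big)$ by the expansion \eqref{Rmexp}; the factors $u^2$ and $t^2$ come from the cross-terms among $th$, $ut^2k_1$ and $uto$, using $tu\le\tfrac12(t^2+u^2)$. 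Since $o=O(r^{-2n})$ and $r\ge\tfrac12 t^{-1/4}$ on the damage zone, $t(\lambda+u)\chi_t(\rho)\,o=O(t^2\rho^2)$ there, and combining these gives the case $\ell=0$. The cases $\ell>0$ follow from the same argument applied to covariant derivatives together with $\abs{\nabla^\ell h},\abs{\nabla^\ell k_1}\le c_\ell\rho^{2-\ell}$, $\abs{\nabla^\ell o}\le c_\ell\rho^{-2n-\ell}$ and the differentiated form of \eqref{Rmexp}; as in Lemmas~\ref{estimate} and~\ref{htulem} I would omit these details.

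The only genuine work is bookkeeping: checking that every error term coming from the pullbacks and from the quadratic Ricci remainder recombines into the single bound $c_\ell\,t^2(1+t^2+u^2)\rho^{2-\ell}$, and---crucially---that the quadratic leading terms of $\phi_t^*(u\tilde k)$ and $ut^2k_1$ genuinely cancel in the damage zone. That cancellation is exactly why Proposition~\ref{tensork33} (in particular \eqref{kp2}, \eqref{kp3}, and $k_1=c\,\sigma_2+O(r^{2-2n+\epsilon})$) and Proposition~\ref{extending} were arranged in advance; without matching quadratic terms the interpolation would inject an uncontrolled $O(t\rho^2)$ error into $Ric_{g_{t,u}}$ and the estimate would fail.
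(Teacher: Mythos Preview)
Your proposal is correct and follows essentially the same route as the paper: split $X^t$ into the inner region (handled by Lemma~\ref{htulem} via scaling) and the damage zone (handled by expanding around $t\,g_{euc}$ after checking that the quadratic parts of $\phi_t^*(u\tilde k)$ and $ut^2k_1$ cancel). If anything, you are slightly more explicit than the paper in one place: you invoke \eqref{kp2} and \eqref{kp3} to justify $d_{t\cdot euc}Ric(ut^2c\,\sigma_2)=0$, whereas the paper absorbs this into a blanket reference to \eqref{eeee}.
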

\begin{proof}
If $r < (1/2) t^{-1/4}$, then the proof follows from the Lemma \ref{htulem} by scaling.
If 
\begin{align}
\label{bet}
(1/2) t^{-1/4} < r < 2 t^{-1/4},
\end{align}
then 
\begin{align}
g_{t,u}=(1 - \chi_{t}(r))(\phi_{t}^* (u\tilde{k}+g_0)) + \chi_{t}(r) (th_{t}+t^{2}uk_{1}).
\end{align}
We have that
\begin{align}
&\phi_{t}^{\ast}g_0=tg_{euc}+t^{2}H+O(t^{\frac{3}{2}} \rho^3)\\
&th_{t}=tg_{euc}+t^{2}H+tO(r^{-2n})+tO(r^{2-2n+\epsilon}),
\end{align}
and
\begin{align}
&\phi_{t}^{\ast}(u \tilde{k})=t^{2}(u\sigma)+ut^{2}O(r^{-2n+2+\epsilon})\\
&t^{2}uk_{1}=t^{2}u(\sigma)+t^{2}O(r^{-2n+2+\epsilon}),
\end{align}
as $t\rightarrow 0 $. Since the quadratic terms agree in each of these, 
we have the estimate
\begin{align}
&g_{t,u}=tg_{euc}+t^{2}H+t^2 u\sigma+tO(t^{\frac{3}{2}}\rho^{3}),
\end{align}
as $t \to 0$. Writing 
 \begin{align}
 \theta=g_{t}-tg_{euc},	
 \end{align}
expanding the Ricci tensor near $t\cdot g_{euc}$ we have
\begin{align}
&Ric_{g_{t,u}}=d_{t \cdot euc}Ric(\theta)+Q(\theta),
\end{align}
where $Q(\theta)$ satisfies the estimate
\begin{align}
&t^{2}\abs{Q(\theta)}\le c(\abs{\theta}\abs{\nabla^{2}\theta}+\abs{\nabla \theta}^{2}),
\end{align}
for some constant $c$, where the norms are with respect to $g_{euc}$. 
More precisely,
\begin{align}
\theta=t^{2}H+t^{2}u\sigma+tO(t^{\frac{3}{2}}\rho^{3})
\end{align}
implies that
\begin{align}
&\abs{\nabla^{2}\theta}\abs{\theta}=O \big(t^4 (1 + u^2 + t^2)\rho^{2}\big),\\
&\abs{\nabla \theta}^{2}= O\big(t^4(1 + u^2 + t^2) \rho^2\big).
\end{align}
From \eqref{eeee} it follows that 
\begin{align}
	&d_{t\cdot euc}Ric(\theta)=t\Lambda g_{euc}+O\big( t^2 (1 + u^2 + t^2)\rho^{2}\big),
\end{align}
where $O$ is with respect to the Euclidean metric. Note that \eqref{bet} implies that
\begin{align}
t\lambda \chi_{t} (\rho) o=O(t^{2}\rho^{2}).
\end{align}
Finally we obtain
\begin{align}
Ric_{g_{t,u}}-\Lambda g_{t,u}-t(\lambda+u) \chi_{t} (\rho) o=
O\big(t^2(1 + t^{2}+u^{2}\big)\rho^{2}),
\end{align}
so the lemma for $\ell=0$ follows. For $\ell > 0$, the proof is similar and is omitted.
\end{proof}
\begin{proof}[Proof of Theorem \ref{theorem2}]
We apply Proposition \ref{obs}, keeping track of the dependence on 
the variable $u$. From Proposition \ref{obs}, we obtain a solution of 
\begin{align}
\Phi ( \hat{g}_{t,u,v})  =  \lambda(t,u) ( \chi_t o).
\end{align}
Lemma \ref{udep} implies that $\lambda(t,u)$ is differentiable in 
the $u$ variable, and if $\lambda=0$ then 
\begin{align}
\label{lexp2}
\lambda(t,u)=t u+O(t^{\frac{3}{2}- \delta}).	
\end{align}
By the classical implicit function theorem, for each $t$ sufficiently small, 
there exists $u(t)$ such that $\lambda(t,u(t))=0$.
For $t$ sufficiently small, $Ric_{g_{t,u,v}}$ is strictly negative,
and a standard argument (see \cite{MR2260400}) then implies that $\hat{g}_{t,u,v}$  is Einstein.
\end{proof}

\begin{proof}[Proof of Theorem \ref{theorem3}]
First assume that $g_0$ is non-degenerate. 
Observe that the refined approximate metric is close to 
the na\"ive approximate metric in the $C^{2,\alpha}_{\delta_0, \delta_{\infty};t}$
weighted norm, for $t$ sufficiently small. 
As above, from the results in \cite[Section~8]{Biquard}, 
for any metric $g_E$ satisfying \eqref{t1n}, then there exists a 
diffeomorphism $\varphi : M^t \rightarrow M^t$ so that $B_{g_t} ( g_t - \varphi^* g_E) = 0$,
and $\varphi^* g_E$ remains close to $g_t$ in the $C^{2,\alpha}_{\delta_0, \delta_{\infty};t}$ weighted norm.  
The metric $\varphi^* g_E$ would then give a zero of the mapping $\Phi_t$, 
but then \eqref{lexp} would then imply that $\lambda = 0$, a contradiction.

If $g_0$ is not non-degenerate, then if $\delta_{\infty}$ is 
chosen sufficiently small, the linearized operator is
Fredholm (see \cite{Leebook}), so there is a finite-dimensional space 
of $L^2$ infinitesimal Einstein deformations on $(M_0,g_0)$.
The remainder of the proof is almost identical to the 
proof of Theorem \ref{theoremone} given above.
\end{proof}
\bibliographystyle{amsalpha}
\bibliography{Morteza_Viaclovsky}

\providecommand{\bysame}{\leavevmode\hbox to3em{\hrulefill}\thinspace}
\providecommand{\MR}{\relax\ifhmode\unskip\space\fi MR }
\providecommand{\MRhref}[2]{%
  \href{http://www.ams.org/mathscinet-getitem?mr=#1}{#2}
}
\providecommand{\href}[2]{#2}
\begin{thebibliography}{CDLS05}

\bibitem[AD98]{Lars}
Lars Andersson and Mattias Dahl, \emph{Scalar curvature rigidity for
  asymptotically locally hyperbolic manifolds}, Ann. Global Anal. Geom.
  \textbf{16} (1998), no.~1, 1--27.

\bibitem[AG11]{Guill}
Erwann Aubry and Colin Guillarmou, \emph{Conformal harmonic forms,
  {B}ranson-{G}over operators and {D}irichlet problem at infinity}, J. Eur.
  Math. Soc. (JEMS) \textbf{13} (2011), no.~4, 911--957.

\bibitem[And08]{ander}
Michael~T. Anderson, \emph{On boundary value problems for {E}instein metrics},
  Geom. Topol. \textbf{12} (2008), no.~4, 2009--2045.

\bibitem[AV12]{AcheViaclovsky}
Antonio~G. Ache and Jeff~A. Viaclovsky, \emph{Obstruction-flat asymptotically
  locally {E}uclidean metrics}, Geom. Funct. Anal. \textbf{22} (2012), no.~4,
  832--877.

\bibitem[Bar86]{bartnik}
Robert Bartnik, \emph{The mass of an asymptotically flat manifold}, Comm. Pure
  Appl. Math. \textbf{39} (1986), no.~5, 661--693.

\bibitem[BB82]{BB}
Lionel B{\'e}rard-Bergery, \emph{Sur de nouvelles vari\'et\'es riemanniennes
  d'{E}instein}, Institut \'{E}lie {C}artan, 6, Inst. \'Elie Cartan, vol.~6,
  Univ. Nancy, Nancy, 1982, pp.~1--60.

\bibitem[Bes08]{Besse}
Arthur~L. Besse, \emph{Einstein manifolds}, Classics in Mathematics,
  Springer-Verlag, Berlin, 2008, Reprint of the 1987 edition.

\bibitem[Biq06]{MR2260400}
Olivier Biquard, \emph{Asymptotically symmetric {E}instein metrics}, SMF/AMS
  Texts and Monographs, vol.~13, American Mathematical Society, Providence, RI;
  Soci\'et\'e Math\'ematique de France, Paris, 2006, Translated from the 2000
  French original by Stephen S. Wilson.

\bibitem[Biq08]{biquniq}
\bysame, \emph{Continuation unique \`a partir de l'infini conforme pour les
  m\'etriques d'{E}instein}, Math. Res. Lett. \textbf{15} (2008), no.~6,
  1091--1099.

\bibitem[Biq13]{Biquard}
\bysame, \emph{D\'esingularisation de m\'etriques d'{E}instein. {I}}, Invent.
  Math. \textbf{192} (2013), no.~1, 197--252.

\bibitem[Biq16]{Biquard2}
\bysame, \emph{D\'esingularisation de m\'etriques d'{E}instein. {II}}, Invent.
  Math. \textbf{204} (2016), no.~2, 473--504.

\bibitem[Biq17]{Biquard3}
\bysame, \emph{Non d\'eg\'en\'erescence et singularit\'es des m\'etriques
  d'{E}instein asymptotiquement hyperboliques en dimension 4},
  arXiv.org:1704.05389, 2017.

\bibitem[Bre08]{Brendle}
Simon Brendle, \emph{Blow-up phenomena for the {Y}amabe equation}, J. Amer.
  Math. Soc. \textbf{21} (2008), no.~4, 951--979.

\bibitem[Cal79]{Calabi}
E.~Calabi, \emph{M\'etriques k\"ahl\'eriennes et fibr\'es holomorphes}, Ann.
  Sci. \'Ecole Norm. Sup. (4) \textbf{12} (1979), no.~2, 269--294.

\bibitem[CDLS05]{CDLS}
Piotr~T. Chru{\'s}ciel, Erwann Delay, John~M. Lee, and Dale~N. Skinner,
  \emph{Boundary regularity of conformally compact {E}instein metrics}, J.
  Differential Geom. \textbf{69} (2005), no.~1, 111--136.

\bibitem[GL91]{Lee}
C.~Robin Graham and John~M. Lee, \emph{Einstein metrics with prescribed
  conformal infinity on the ball}, Adv. Math. \textbf{87} (1991), no.~2,
  186--225.

\bibitem[GV16]{GurskyViaclovsky}
Matthew~J. Gursky and Jeff~A. Viaclovsky, \emph{Critical metrics on connected
  sums of {E}instein four-manifolds}, Adv. Math. \textbf{292} (2016), 210--315.

\bibitem[HV16]{HanViaclovsky}
Jiyuan Han and Jeff~A. Viaclovsky, \emph{Deformation theory of scalar-flat
  {K}\"ahler {ALE} surfaces}, arXiv.org:1605.05267, 2016.

\bibitem[Joy00]{Joyce}
Dominic~D. Joyce, \emph{Compact manifolds with special holonomy}, Oxford
  Mathematical Monographs, Oxford University Press, Oxford, 2000.

\bibitem[KN96]{kob}
Shoshichi Kobayashi and Katsumi Nomizu, \emph{Foundations of differential
  geometry. {V}ol. {II}}, Wiley Classics Library, John Wiley \& Sons, Inc., New
  York, 1996, Reprint of the 1969 original, A Wiley-Interscience Publication.

\bibitem[KR97]{Kuhnel}
W.~K{\"u}hnel and H.-B. Rademacher, \emph{Conformal completion of {${\rm
  U}(n)$}-invariant {R}icci-flat {K}\"ahler metrics at infinity}, Z. Anal.
  Anwendungen \textbf{16} (1997), no.~1, 113--117, Dedicated to the memory of
  Paul G{\"u}nther.

\bibitem[Lee06]{Leebook}
John~M. Lee, \emph{Fredholm operators and {E}instein metrics on conformally
  compact manifolds}, Mem. Amer. Math. Soc. \textbf{183} (2006), no.~864,
  vi+83.

\bibitem[Lu99]{PengLu}
Peng Lu, \emph{K\"ahler-{E}instein metrics on {K}ummer threefold and special
  {L}agrangian tori}, Comm. Anal. Geom. \textbf{7} (1999), no.~4, 787--806.

\bibitem[Maz88]{MazzeoHodge}
Rafe Mazzeo, \emph{The {H}odge cohomology of a conformally compact metric}, J.
  Differential Geom. \textbf{28} (1988), no.~2, 309--339.

\bibitem[Maz91a]{Mazzeo}
\bysame, \emph{Elliptic theory of differential edge operators. {I}}, Comm.
  Partial Differential Equations \textbf{16} (1991), no.~10, 1615--1664.

\bibitem[Maz91b]{Mazzeo1}
\bysame, \emph{Unique continuation at infinity and embedded eigenvalues for
  asymptotically hyperbolic manifolds}, Amer. J. Math. \textbf{113} (1991),
  no.~1, 25--45.

\bibitem[MK06]{KM}
James Morrow and Kunihiko Kodaira, \emph{Complex manifolds}, AMS Chelsea
  Publishing, Providence, RI, 2006, Reprint of the 1971 edition with errata.

\bibitem[MS07]{MazzeoSinger}
Rafe Mazzeo and Michael Singer, \emph{Some remarks on conic degeneration and
  bending of {P}oincar\'e-{E}instein metrics}, arXiv.org:0709.1498, 2007.

\bibitem[OSS16]{OSS}
Yuji Odaka, Cristiano Spotti, and Song Sun, \emph{Compact moduli spaces of del
  {P}ezzo surfaces and {K}\"ahler-{E}instein metrics}, J. Differential Geom.
  \textbf{102} (2016), no.~1, 127--172.

\bibitem[PP87]{PagePope}
Don~N. Page and C.~N. Pope, \emph{Inhomogeneous {E}instein metrics on complex
  line bundles}, Classical Quantum Gravity \textbf{4} (1987), no.~2, 213--225.

\bibitem[Sal89]{Salamon}
Simon Salamon, \emph{Riemannian geometry and holonomy groups}, Pitman Research
  Notes in Mathematics Series, vol. 201, Longman Scientific \& Technical,
  Harlow, 1989.

\bibitem[Tia90]{Tian}
G.~Tian, \emph{On {C}alabi's conjecture for complex surfaces with positive
  first {C}hern class}, Invent. Math. \textbf{101} (1990), no.~1, 101--172.

\end{thebibliography}
\end{document}